\newcommand{\mc}{\mathcal}
\newcommand{\sub}{\subseteq}
\newcommand{\subneq}{\subsetneq}
\newcommand{\ol}{\overline}
\newcommand{\lra}{\Leftrightarrow}
\newcommand{\ra}{\Rightarrow}
\newcommand{\sm}{\setminus}
\newcommand{\al}{\alpha}
\newcommand{\be}{\beta}
\newcommand{\wmax}{w\op{-Max}}
\newcommand{\tmax}{t\op{-Max}}
\newcommand{\Max}{\op{Max}}
\newcommand{\spec}{\op{Spec}}
\newcommand{\astspec}{\ast\op{-Spec}}
\newcommand{\astmax}{\ast\op{-Max}}
\newcommand{\op}{\operatorname}
\newcommand{\F}{\mc F}
\newcommand{\cP}{\mc P}
\newcommand{\cS}{\mc S}
\newtheorem{theorem}{Theorem}[section]
\newtheorem{lemma}[theorem]{Lemma}
\newtheorem{prop}[theorem]{Proposition}
\newtheorem{cor}[theorem]{Corollary}
\newtheorem{remark}[theorem]{Remark}
\theoremstyle{definition}
\begin{document}

	\title{Stability and Clifford regularity \\with respect to star operations}

\subjclass{Primary: 13A15; Secondary: 13G05.}

\keywords{Stability, Clifford regularity, star operation}

\author{Stefania Gabelli and Giampaolo Picozza
}

\address{Dipartimento di Matematica, Universit\`{a} degli Studi Roma
Tre,
Largo S.  L.  Murialdo,
1, 00146 Roma, Italy}

\email{gabelli@mat.uniroma3.it, picozza@mat.uniroma3.it
}




\begin{abstract}  In the last few years, the concepts of stability and Clifford regularity have been fruitfully extended by using star operations. In this paper we deepen the  study of star stable and star regular domains and relate these two classes of domains  to each other.
\end{abstract}

\maketitle

\section*{Introduction}

Throughout all the paper, $R$ will be an integral domain and $K$ its field of fractions.
If $I$ is a nonzero fractional ideal of $R$, we call $I$ simply an \emph{ideal} and if $I\sub R$ we say that $I$ is an \emph{integral ideal}.

The \emph{ideal class semigroup} of $R$, here denoted by $\cS(R)$, consists of the isomorphism classes of the ideals of $R$. Clearly $R$ is a Dedekind domain if and only if $\cS(R)$ is a group.
By a well-known theorem of Clifford, a commutative semigroup $S$ is a disjoint union of groups if and only if each element $x\in S$ is \emph{von Neuman regular}, that is there exists an element $a\in S$ such that $x=x^2a$. Idempotent elements are regular. $S$ is called a \emph{Clifford semigroup} (respectively, a \emph{Boole semigroup}) if its elements are all regular (respectively, idempotent) and $R$ is called a \emph{Clifford regular domain} (respectively, a \emph{Boole regular domain}) if $\cS(R)$ is a Clifford (respectively, Boole)  semigroup.
Dedekind domains are trivial examples of Clifford regular domains. Zanardo and Zannier proved that all orders in quadratic fields are Clifford regular domains \cite{ZZ}  while  Bazzoni and Salce showed that all valuation domains are Clifford regular \cite{BS}.

A particular class of Clifford regular domains is given by stable domains. A domain is (\emph{finitely}) \emph{stable} if each (finitely generated) ideal  is invertible in its endomorphism ring. Stable
domains have been thoroughly investigated by B. Olberding  \cite{O3, O5, O1, O2}.

Since a valuation domain is stable if and only if it is strongly discrete \cite[Proposition 4.1]{O3}, not all Clifford regular domains are stable. On the other hand  Clifford regular domains are finitely stable, so that in the Noetherian case Clifford regularity coincides with stability  \cite[Theorem 3.1]{B3}.

The study of Clifford regular domains was carried on by S. Bazzoni \cite{B1, B2, B3, B4}. In particular in \cite{B3} she
  characterized integrally closed Clifford regular domains as Pr\"ufer domains with finite character. To this end, she established an interesting  relation between Clifford regularity and the local invertibility property. (A domain has the \emph{local invertibility property} if each locally invertible ideal is invertible.)  Bazzoni conjectured that a Pr\"ufer domain with the local invertibility property be of finite character. This conjecture was then solved in positive in \cite{HM} and the
local invertibility property and other related properties were later investigated by several authors \cite{McG, Za, ZD, FPT}.

Stability with respect to semistar operations was introduced and studied by the authors of this paper in \cite{GP}.

The first attempt to extend the notion of Clifford regularity in the setting of star operations is due to Kabbaj and Mimouni, who considered the class semigroup  of  $t$-ideals and extended the results known for Pr\"ufer domains to P$v$MDs. They also studied Clifford and Boole $t$-regularity for Noetherian and Mori domains \cite{KM1, KM2, KM3, KM4}.

Finally Halter-Koch, in the language of ideal systems, introduced Clifford $\ast$-regularity for star operations of finite type \cite{HK}. Among other results, he characterized Clifford $\ast$-regular P$\ast$MDs and in this setting he proved the star analog of Bazzoni's conjecture.

In this paper we deepen the  study of stability and regularity with respect to semistar operations and relate these two concepts  to each other.
For technical reasons, the most interesting consequences are obtained for (semi)star operations spectral and of finite type. In this case, we show that $\ast$-regularity implies that $\ast$ is the $w$-operation on $R$ (Corollary \ref{ast=w}).

In Section 1, we fix the notation and review some basic properties of $\ast$-stable and $\ast$-regular domains.

In Section 2, we study the transfer of $\ast$-stability and $\ast$-regularity to overrings. In particular we prove that several classes of  star compatible overrings inherit these properties. We also give conditions under which $\ast$-stability and $\ast$-regularity are $\ast$-local properties.

In Section 3, we consider local  domains. The problem of establishing when a local one-dimensional Clifford regular domain is stable was investigated by  P. Zanardo in \cite{Z} by means of valuation overrings. In terms of star operations, we prove that a local one-dimensional
Clifford regular domain $(R, M)$ is stable if and only if  the ($t$-)maximal ideals of the endomorphism ring $E:=(M:M)$ are all divisorial (Theorem \ref{onedimstable}).

Section 4 is devoted to the integrally closed case. First of all we prove that, for a star operation $\ast=\tilde{\ast}$ spectral and of finite type, the $\ast$-integral closure of a Clifford $\ast$-regular domain is a P$\ast$MD and that when $R$ is Boole $\ast$-regular it is a GCD-domain (Theorem \ref{wic}).  This allows us to characterize integrally closed $\ast$-regular and $\ast$-stable  domains by using results from \cite{GP} and \cite{HK} (Theorems \ref{pvmd} and \ref{stablepvmd}). As a consequence we get that an integrally closed $\ast$-regular domain is $\ast$-stable if and only if it is strongly $\ast$-discrete.
We finish this section by showing that, for $\ast=\tilde{\ast}$, in $\ast$-dimension one $\ast$-regularity and $\ast$-stability are equivalent if and only if the $\ast$-integral closure is a Krull domain (Theorem \ref{tdimone}).

Finally, in Section 5, we deal with the $\ast$-finite character.  The main result is Theorem \ref{FC}, which states that, for $\ast=\tilde{\ast}$, $\ast$-regular domains have  $\ast$-finite character. For the identity this was proved by S. Bazzoni in \cite[Theorem 4.7]{B4}, however our argument is different and more direct.  For  $\ast=\tilde{\ast}$, a domain is $\ast$-stable if and only if it is $\ast$-locally stable and it has the $\ast$-finite character  \cite[Theorem 1.9]{GP}, but it is not known if a similar result holds for $\ast$-regularity.  We show that,  for $\ast=\tilde{\ast}$ or $\ast=t$, this is true for a class of domains including domains of $\ast$-dimension one (Theorem \ref{teoIFC2} and Corollary \ref{teowM2}).
The proof is based on the observation that,  for this class of domains, the $\ast$-finite character is equivalent to the $\ast$-local $\ast$-invertibilty property (i.e., if $I^\ast R_M$ is principal for each $M\in \astmax(R)$, then $I$ is $\ast$-invertible).

\section{Preliminaries and notation}

Star operations, as the $v$-closure (or divisorial closure), the $t$-closure and the $w$-closure, are
an essential tool in modern multiplicative ideal theory for characterizing and investigating several classes of integral domains.
The consideration that some important operations on ideals, like the integral closure, satisfy almost all the properties of star operations led A. Okabe and R. Matsuda to introduce in 1994 the more general and flexible notion of semistar operation. The class of semistar operations includes the classical star operations and often provides a more appropriate context for approaching several questions of multiplicative ideal theory. For standard material about semistar
operations, see for example \cite{EFP}; here we recall some basic notions that we will use in the paper.

By $\ol{\mc F}(R)$ we denote the set of nonzero $R$-submodules of $K$ and by $\mc F (R)$  the
set of all ideals of $R$.  A \emph{semistar operation} (respectively, a \emph{star operation}) $\ast$ on $R$ is a map
$\ol{\mc F}(R)\rightarrow \ol{\mc F}(R)$ (respectively, $\mc F(R)\rightarrow \mc F(R)$), $I\mapsto I^{\ast}$, such that the following
conditions hold for each $0\not=a\in K$ and for each $I$, $J\in \ol{\mc F}(R)$ (respectively, $\mc F(R)$):

\begin{itemize}
\item[(i)]  $(aI)^\ast = aI^\ast$ (respectively, $(aI)^\ast = aI^\ast$ and $R=R^\ast$);

\item[(ii)] $I \subseteq I^\ast$, and $I \subseteq J \Rightarrow I^\ast
\subseteq J^\ast$;

\item[(iii)] $I^{\ast\ast} = I^\ast$.
\end{itemize}

A semistar operation $\ast$ is
called a \emph{semistar operation of finite type} if, for each $I
\in\ol{\mc F}(R)$, we have
$I^\ast = \bigcup \{J^\ast \, \vert \, J
\in \ol{\mc F}(R) \mbox{ finitely generated and } J \subseteq I\}$.

If $\ast$ is any semistar operation, the semistar operation $\ast_f$ defined by
$I^{\ast_{f}}:=\bigcup \{J^\ast \, \vert \, J \in \ol{\mc F}(R)
\mbox{ finitely generated and } J \subseteq I\}$,
for each $I \in \ol{\mc F}(R)$, is
the semistar operation of finite type \emph{associated to $\ast$}.
Clearly $\ast$ is of finite type if and only if $\ast=\ast_f$.

If $\ast$ is a semistar operation on $R$ such that $R^\ast=R$, $\ast$ is called a \emph{(semi)star operation}
on $R$ and its restriction to the set of ideals $\mc F(R)$
is a star operation on $R$, still denoted by $\ast$. Conversely, any star operation $\ast$ on $R$ can be extended to a (semi)star operation by setting $I^\ast=K$ for all $I\in \ol{\mc F}(R)\sm \mc F(R)$.

If $\ast$ is a semistar operation on $R$ and $D$ is an overring of $R$,
 the restriction of $\ast$ to the set of $D$-submodules of $K$ is a semistar operation on $D$,  here denoted by $\ast_{\vert_D}$ or by $\dot{\ast}$ when no confusion arises. When $D^ \ast =D$,  $\dot{\ast}$ is a (semi)star operation on $D$
\cite[Proposition 2.8]{fl01}.
Note that $\dot{\ast}$ shares many properties with $\ast$ (see
for instance \cite[Proposition 3.1]{giampa}); for example,
if $\ast$ is of finite type then $\dot{\ast}$ is of
finite type \cite[Proposition 2.8]{fl01}.

We will be mainly concerned with star operations and (semi)star operations.

If $\ast$ is a (semi)star operation, an ideal $I$ is a \emph{$\ast$-ideal} if $I=I^{\ast}$ and $I$ is called
\emph{$\ast$-finite} (or \emph{of finite type}) if $I^\ast=J^{\ast}=J^{\ast_f}$ for some finitely
generated ideal $J\in \ol{\mc F} (R)$.

A \emph{$\ast$-prime} is a prime ideal which is also a $\ast$-ideal and a \emph{$\ast$-maximal ideal} is a
$\ast$-ideal maximal in the set of proper integral $\ast$-ideals of $R$. We denote by
$\astspec(R)$ (respectively, $\astmax(R)$) the set of $\ast$-prime
(respectively, $\ast$-maximal) ideals of $R$. If $\ast$ is a (semi)star
operation of finite type, by Zorn's lemma each $\ast$-ideal is contained in
a $\ast$-maximal ideal, which is prime. In this case, $R=\bigcap _{M\in \astmax(R)}R_{M}$. We say that $R$
has \emph{$\ast$-finite character} if each nonzero element of $R$ is contained in
at most finitely many $\ast$-maximal ideals.

When $\ast $ is of finite type, a minimal prime of a $\ast$-ideal is
a $\ast$-prime. In particular, any minimal prime over a nonzero
principal ideal (in particular any height-one prime) is a $\ast$-prime, for any star operation $\ast$ of finite
type.
We say that $R$ has \emph{$\ast$-dimension
one} if each $\ast$-prime ideal has height one.

The \emph{identity} is a (semi)star operation denoted by $d$, $I^{d}:=I$
for each $I\in \ol{\mc F} (R)$. Two nontrivial (semi)star operations which have been
intensively studied in the literature are the \emph{$v$-operation} (or \emph{divisorial closure}) and the \emph{$t$-operation}.
The divisorial closure of $I\in \ol{\mc F} (R)$ is defined by setting $
I^{v}:=(R:(R:I))$, where for any $I, J\in \ol{\mc F} (R)$ we set $(J\colon I):=\{x\in K\,:\, xI
\subseteq J\}$. A $v$-ideal of $R$ is also called a \emph{divisorial ideal}.
The $t$
-operation is the (semi)star operation of finite type associated to $v$ and is therefore defined by setting
$I^t:= \bigcup \{J^v \, \vert \, J
\in \ol{\mc F}(R) \mbox{ finitely generated and } J \subseteq I\}$.

If $\mc F \subseteq \spec (R)$ is a \emph{defining family} for $R$, that is a family of pairwise incomparable prime ideals such that $R=\bigcap_{P \in \mc F} R_P$, and $\ast_P$ is a (semi)star operation on $R_P$, for each $P \in \mc F$, then $\ast := \wedge_{_{P \in\mc F}} \ast_P$, defined by $I \mapsto I^\ast := \bigcap_{P \in \mc F} (IR_P)^{\ast_P}$, for all $I\in\ol{\mc F}(R)$, is a (semi)star operation on $R$ \cite[Theorem 2]{DDA}. If $d_P$ is the identity on $R_P$,  we denote by $\ast_\mc F$ the (semi)star operation $\wedge_{_{P \in\mc F}} d_P$, defined by $I \mapsto I^{\ast_\mc F} := \bigcap_{P \in \mc F} IR_P$.
A (semi)star operation $\ast$ is called \emph{spectral} if $\ast=\ast_\mc F$ for some defining family of prime ideals $\mc F$ of $R$. It is easy to see that, if $\ast$ is spectral, a prime ideal is a $\ast$-ideal if and only if $P^\ast \subsetneq R$.

Clearly when $\mc F=\Max(R)$ we have $\ast_\mc F=d$.
If $\ast$ is a (semi)star operation of finite type on $R$, taking  $\mc F = \astmax(R)$, the induced (semi)star operation $\ast_\mc F$ is here denoted by $\tilde{\ast}$.  The (semi)star operation $\tilde{t}$ is usually denoted by $w$.

It is known that $\ast
= \tilde{\ast}$ if and only if $\ast$ is spectral and of finite
type, if and only if $\ast$ is of finite type and $(I\cap
J)^\ast = I^ \ast \cap J^\ast$, for $I$, $J \in \ol{\mc F}(R)$
\cite[Corollary 3.9 and Proposition 4.23]{FH2000}.

If $\ast_1$ and $\ast_2$ are (semi)star operations on $R$, we say
that $\ast_1 \leq \ast_2$ if $I^{\ast_1} \subseteq
I^{\ast_2}$, for each $I \in \ol{\mc F} (R)$. This is equivalent to the
condition that $(I^{\ast_1})^{\ast_2}= (I^{\ast_2})^{\ast_1} =
I^{\ast_2}$.
If $\ast_1 \leq \ast_2$, then $(\ast_1)_f \leq (\ast_2)_f$ and $\widetilde{\ast_1} \leq \widetilde{\ast_2}$.
Also, for each (semi)star
operation $\ast$, we have $d\leq \ast\leq v$ (so that $\ast_f\leq t$ and $\tilde{\ast}\leq w$)
 and $\tilde{\ast} \leq \ast_f \leq \ast$ (so that $w\leq t\leq v$).

 For any star operation $\ast$, the set of $\ast$-ideals of $R$, denoted by $\mc F_\ast(R)$, is a
semigroup under the $\ast$-multiplication, defined by $(I, J)\mapsto (IJ)^\ast$, with
unit $R$. An ideal $I\in \mc F (R)$ is called $\ast$-invertible if $I^\ast$ is
invertible in $\mc F_\ast(R)$, equivalently
$(I(R:I))^\ast=R$.
If $\ast$ is a star operation of finite type, then $I$ is $\ast$-invertible if and only if $I$ is $\ast$-finite and $I^\ast R_M$ is principal for each $M\in \astmax(R)$ \cite[Proposition 2.6]{K}.

\smallskip
Given an ideal $I$ of $R$, we denote by $E(I):=(I:I)$ the endomorphism ring of $I$ and by $T(I):= I(E(I):I)=I(I:I^2)$ the trace of $I$ in $E(I)$.

If $\ast$ is a star operation on $R$, it is easy to
see that $E(I^\ast)^\ast=E(I^\ast)$. Thus the
restriction of $\ast$ to the set of fractional ideals of
$E(I^\ast):=(I^\ast:I^\ast)$ is a star operation on $E(I^\ast)$,
denoted by $\dot{\ast}:=\ast_{\vert_E}$.

 We say that
an ideal $I$ of $R$ is \emph{$\ast$-stable} if $I^\ast$ is
$\dot{\ast}$-invertible in $E(I^\ast)$ and that $R$ is
\emph{$\ast$-stable} (respectively, \emph{finitely $\ast$-stable}) if each ideal (respectively, each finitely generated ideal) of $R$ is $\ast$-stable. We also say that $I$ is \emph{strongly $\ast$-stable} if $I^\ast$ is principal in $E(I^\ast)$ and that $R$ is \emph{strongly $\ast$-stable} if each ideal is strongly $\ast$-stable.

 Denoting by $\cP(R)$ the group of principal ideals of $R$, the quotient semigroup $\cS_\ast(R):=\F_\ast(R)/\cP(R)$ is called the \emph{$\ast$-Class semigroup} of $R$. We say that $R$ is \emph{Clifford $\ast$-regular}, or simply \emph{$\ast$-regular}, if $\cS_\ast(R)$ is a Clifford semigroup. This means that  for each ideal $I$, the class $[I^\ast]\in \cS_\ast(R)$ is (von Neumann) regular. Note that this is equivalent to say that $I^\ast$ is (von Neumann) regular in $\F_\ast(R)$, that is $I^\ast=(I^2J)^\ast$, for some nonzero ideal $J$ of $R$.
 If  $[I^\ast]$ is regular in $\cS_\ast(R)$, we say that $I$ is \emph{$\ast$-regular}. If $[I^\ast]$ is idempotent, that is $[I^\ast]=[I^\ast]^2=[(I^2)^\ast]$ (equivalently $(I^2)^\ast=xI^\ast$  for a nonzero $x\in K$) we say that $I$ is \emph{Boole $\ast$-regular}  and if each $[I^\ast]$ is idempotent, that is $\cS_\ast(R)$ is a \emph{Boole semigroup}, we say that $R$ is \emph{Boole $\ast$-regular}.
Clearly Boole $\ast$-regularity implies Clifford $\ast$-regularity.

\begin{remark} \rm One could define Clifford star regularity more generally for a semistar operation. In this case, for a semistar operation $\ast$, set $\mathcal{F}_\ast(R) := \{I^\ast ; I \in \mathcal F(R)\}$ and $\cP_\ast(R) := \{xR^\ast ; x \in K\sm \{0\}\}=\cP(R^\ast)$ and consider the quotient semigroup $\mathcal{F}_\ast(R) / \cP_\ast(R)$. It turns out that this is exactly the class semigroup of the overring $R^\ast$ with respect to the (semi)star operation $\dot{\ast}$. Thus, $R$ is Clifford $\ast$-regular if and only if $R^\ast$ is Clifford $\dot{\ast}$-regular. Since the transfer of properties of semistar operations between a domain and its overrings is well-understood, we study Clifford regularity only in the case of star operations.
\end{remark}

In the following two lemmas, we restate some basic results on regularity proved by F. Halter-Koch for star operations of finite type in the language of ideal systems on monoids \cite[Proposition 4.2]{HK}. We note that the assumption that $\ast$ be of finite type is not necessary.

\begin{lemma}\label{lemma1}
Let $R$ be a domain and $\ast$ a star operation on $R$. For an ideal $I$ of $R$,
the following conditions are equivalent:

\begin{enumerate}
\item[(i)] $I$ is Clifford $\ast$-regular;

\item[(ii)] $I^\ast= (I^2(I^\ast:I^2))^\ast$;

\item[(iii)] $I^\ast = (IT(I^\ast))^\ast$.
\end{enumerate}

\end{lemma}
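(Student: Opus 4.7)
The plan is to establish (i) $\Leftrightarrow$ (ii) essentially from the definition and then (ii) $\Leftrightarrow$ (iii) by rewriting $T(I^\ast)$ using the two standard star-operation identities
\[
(AB)^\ast = (A^\ast B^\ast)^\ast \qquad \text{and} \qquad (A:B) = (A:B^\ast) \text{ whenever } A = A^\ast,
\]
both of which follow at once from the axioms (i)--(iii) of a star operation and will be the workhorse tools throughout.

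For (i) $\Rightarrow$ (ii): if $I^\ast = (I^2J)^\ast$ for some nonzero ideal $J$, then from $I^2 J \subseteq (I^2J)^\ast = I^\ast$ we get $J \subseteq (I^\ast:I^2)$, hence
\[
I^\ast = (I^2J)^\ast \subseteq \bigl(I^2(I^\ast:I^2)\bigr)^\ast \subseteq I^\ast,
\]
the last inclusion being automatic from the definition of the colon. For (ii) $\Rightarrow$ (i), one just takes $J := (I^\ast : I^2)$; this is nonzero (it contains $(R:I^2)$) and is a fractional ideal of $R$ (it sits inside $\tfrac{1}{yz}I^\ast$ for any nonzero $y,z \in I$), so it qualifies as an ideal in our sense.

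For (ii) $\Leftrightarrow$ (iii), I would unfold $T(I^\ast) = I^\ast(E(I^\ast):I^\ast) = I^\ast\bigl(I^\ast:(I^\ast)^2\bigr)$ and compute
\[
\bigl(I\,T(I^\ast)\bigr)^\ast = \bigl(I\cdot I^\ast\cdot (I^\ast:(I^\ast)^2)\bigr)^\ast = \bigl((I\cdot I^\ast)^\ast\,(I^\ast:(I^\ast)^2)\bigr)^\ast = \bigl((I^2)^\ast(I^\ast:I^2)\bigr)^\ast = \bigl(I^2(I^\ast:I^2)\bigr)^\ast,
\]
applying $(AB)^\ast=(A^\ast B^\ast)^\ast$ at the second and last steps (so that $(I\cdot I^\ast)^\ast = ((I^\ast)^2)^\ast = (I^2)^\ast$), and using the colon identity above to replace $(I^\ast:(I^\ast)^2)$ by $(I^\ast:I^2)$.

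The main pitfall — and the only place where one could be tempted to assume that $\ast$ is of finite type — is in justifying $(I^\ast:(I^\ast)^2) = (I^\ast:I^2)$; once one notices that this is a pure consequence of $I^\ast$ being a $\ast$-ideal together with $((I^\ast)^2)^\ast = (I^2)^\ast$, the finite-type hypothesis evaporates, which is the improvement over Halter-Koch's statement that the remark preceding the lemma is advertising. Everything else is formal manipulation of $\ast$-closures.
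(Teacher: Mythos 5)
Your proof is correct and complete, and it is worth noting that the paper itself gives no proof of this lemma: it simply defers to \cite[Proposition 4.2]{HK}, where the result is proved for ideal systems of finite type, and remarks that the finite-type hypothesis is unnecessary. Your direct verification --- (i) $\Leftrightarrow$ (ii) straight from the definition of regularity in $\F_\ast(R)$, and (ii) $\Leftrightarrow$ (iii) via the identities $(AB)^\ast=(A^\ast B^\ast)^\ast$ and $(A:B)=(A:B^\ast)$ for $A=A^\ast$, both of which hold for an arbitrary star operation --- is exactly what is needed to substantiate that remark, and the computation $(IT(I^\ast))^\ast=(I^2(I^\ast:(I^\ast)^2))^\ast=(I^2(I^\ast:I^2))^\ast$ is right. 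One microscopic slip: in (ii) $\Rightarrow$ (i) you justify $(I^\ast:I^2)\neq (0)$ by saying it contains $(R:I^2)$, but that inclusion would require $R\subseteq I^\ast$, which fails for a general fractional ideal. The correct and equally immediate observation is that $(I^\ast:I^2)\supseteq (I:I^2)$ and that any nonzero $c$ with $cI\subseteq R$ satisfies $cI^2\subseteq I$, so $(I:I^2)\neq (0)$. With that one-line repair the argument is airtight.
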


\begin{lemma} \label{lemmaX}
Let $R$ be a domain, $\ast$ a star operation on $R$ and assume that $I$ is Clifford $\ast$-regular. Then:

\begin{enumerate}

\item[(1)] If $X$ is an ideal of $R$ such that $I^\ast = (I^2X)^\ast$, then $(IX)^\ast = T(I^\ast)^\ast$.

\item[(2)] $(T(I^\ast)^2)^\ast = (T(I^\ast)^2)^{\dot{\ast}} = T(I^\ast)^\ast$.

\item[(3)] $E(I^\ast) = (T(I^\ast)^\ast:T(I^\ast)) = (E(I^\ast):T(I^\ast))$.

\end{enumerate}
\end{lemma}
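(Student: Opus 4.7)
Set $Y := (I^\ast : I^2)$, so that $T(I^\ast) = I^\ast Y$ and $T(I^\ast)^\ast = (I^\ast Y)^\ast = (IY)^\ast$, using the basic star-operation identity $(A^\ast B)^\ast = (AB)^\ast$. Since $I^2 \subseteq I \subseteq I^\ast$, one has $R \subseteq Y$ and hence $I^\ast \subseteq T(I^\ast)$; moreover $T(I^\ast)$ is an $E(I^\ast)$-module contained in $E(I^\ast)$ (indeed $YI \subseteq (I^\ast:I) = E(I^\ast)$ forces $T(I^\ast)I \subseteq I^\ast$, and $E(I^\ast) = E(I^\ast)^\ast$ gives $T(I^\ast)^\ast \subseteq E(I^\ast)$). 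These observations underlie all three parts.

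For (1), the easy inclusion $(IX)^\ast \subseteq T(I^\ast)^\ast$ is immediate from $X \subseteq Y$ (a consequence of $XI^2 \subseteq I^\ast$). For the reverse inclusion, the plan is to introduce the auxiliary ideal $IX \cdot E(I^\ast)$ and establish two identities that sandwich $(IX)^\ast = (IX \cdot E(I^\ast))^\ast = T(I^\ast)^\ast$. For the first identity $(IX \cdot E(I^\ast))^\ast = T(I^\ast)^\ast$, the direction $\supseteq$ uses $T(I^\ast) \subseteq E(I^\ast)$ together with the computation $(IX \cdot T(I^\ast))^\ast = (IX \cdot I^\ast Y)^\ast = (I^2 XY)^\ast = ((I^2X)^\ast Y)^\ast = (I^\ast Y)^\ast = T(I^\ast)^\ast$, while $\subseteq$ follows from $IX \subseteq T(I^\ast)$ combined with $E(I^\ast) T(I^\ast) \subseteq T(I^\ast)$. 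The second identity $(IX \cdot E(I^\ast))^\ast = (IX)^\ast$ is the crux: one must show that $(IX)^\ast$ is itself an $E(I^\ast)$-module. For $e \in E(I^\ast)$, the inclusion $eI \subseteq I^\ast$ yields $eIX \subseteq I^\ast X$, and invoking $(I^\ast X)^\ast = (IX)^\ast$ together with property (i) of star operations gives $e(IX)^\ast = (eIX)^\ast \subseteq (IX)^\ast$; this is exactly the structural fact needed.

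Part (2) follows by applying (1) with the specific choice $X = Y$ (legitimate by Lemma \ref{lemma1}(ii)): the computation $(T(I^\ast)^2)^\ast = ((I^\ast)^2 Y^2)^\ast = (I^2 Y \cdot Y)^\ast = ((I^2Y)^\ast Y)^\ast = (I^\ast Y)^\ast = T(I^\ast)^\ast$ closes the first equality, and the $\dot{\ast}$-version is automatic since $T(I^\ast)^2$ is an $E(I^\ast)$-module. For (3), the sandwich $E(I^\ast) \subseteq (T(I^\ast)^\ast : T(I^\ast)) \subseteq (E(I^\ast) : T(I^\ast))$ is immediate from the $E(I^\ast)$-module structure of $T(I^\ast)$ and from $T(I^\ast)^\ast \subseteq E(I^\ast)$. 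The closing inclusion $(E(I^\ast) : T(I^\ast)) \subseteq E(I^\ast)$ is where Lemma \ref{lemma1}(iii) enters: if $zT(I^\ast) \subseteq E(I^\ast)$, then $zT(I^\ast) \cdot I \subseteq E(I^\ast) I \subseteq I^\ast$, so after taking $\ast$-closure, $z(IT(I^\ast))^\ast = zI^\ast \subseteq I^\ast$, placing $z$ in $E(I^\ast)$. The main obstacle is the hard inclusion in (1): a direct element-level argument aiming to show $IY \subseteq (IX)^\ast$ fails because $X$ may be strictly smaller than $Y$, but the structural detour through $(IX \cdot E(I^\ast))^\ast$ succeeds precisely because $(IX)^\ast$ formally inherits an $E(I^\ast)$-module structure from the identity $(A^\ast B)^\ast = (AB)^\ast$.
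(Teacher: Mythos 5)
Your argument is correct, and all three parts check out: the identity $(A^\ast B)^\ast=(AB)^\ast$ justifies $T(I^\ast)^\ast=(I^\ast Y)^\ast=(IY)^\ast$ with $Y:=(I^\ast:I^2)=(I^\ast:(I^\ast)^2)$; in (1) the two sandwiching identities $(IX\cdot E(I^\ast))^\ast=T(I^\ast)^\ast$ and $(IX\cdot E(I^\ast))^\ast=(IX)^\ast$ are each verified correctly, the second via the genuinely needed observation that $e(IX)^\ast=(eIX)^\ast\subseteq(I^\ast X)^\ast=(IX)^\ast$ for $e\in E(I^\ast)$; the computation in (2) and the chain $E(I^\ast)\subseteq(T(I^\ast)^\ast:T(I^\ast))\subseteq(E(I^\ast):T(I^\ast))\subseteq E(I^\ast)$ in (3), with Lemma \ref{lemma1}(iii) closing the loop, are both sound. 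Note that the paper does not prove this lemma at all: it cites Halter-Koch's monoid-theoretic result \cite[Proposition 4.2]{HK} and merely remarks that the finite-type hypothesis can be dropped; your proof is a self-contained ideal-theoretic verification that in particular substantiates that remark, since no finiteness of $\ast$ is ever invoked. One small inaccuracy: the opening claim ``$I^2\subseteq I\subseteq I^\ast$, hence $R\subseteq Y$ and $I^\ast\subseteq T(I^\ast)$'' presupposes that $I$ is an integral ideal, whereas the paper's convention is that ``ideal'' means fractional ideal. This is harmless, both because the assertions you actually use later ($X\subseteq Y$, $T(I^\ast)\subseteq E(I^\ast)$, $IX\subseteq T(I^\ast)$, and the $E(I^\ast)$-module structure of $T(I^\ast)$) hold for fractional ideals, and because every quantity in the statement is invariant under replacing $I$ by $aI$, so one may normalize to the integral case anyway; but the remark as written should either be deleted or prefaced by such a normalization.
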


\begin{remark} \label{rem1} \rm (1) By Lemma \ref{lemma1}, we see that if $\ast_1\leq \ast_2$, then $\ast_1$-regularity implies $\ast_2$-regularity.

(2)
If $\ast=\tilde{\ast}$ is spectral and of finite type and $I$ is a finitely generated ideal of $R$, then $(J^\ast:I^\ast)=(J:I)^\ast$, for all ideals $J$. Hence a finitely generated ideal $I$ is $\ast$-stable if and only if  $E(I)^\ast=(T(I))^\ast$ and $I$ is $\ast$-regular if and only if $I^\ast=(I^2(E(I)^\ast:I))^\ast=(IT(I))^\ast$.
\end{remark}

\begin{prop} \label{prop1} Let $I$ be an
ideal of $R$ and, for a star operation $\ast$ on $R$, set $E:=E(I^\ast)$.
\begin{enumerate}
\item[(1)] If $I$  is $\ast$-stable, then  $I$  is $\ast$-regular. Hence a $\ast$-stable domain is Clifford $\ast$-regular.

\item[(2)] If $I$ is $\ast$-regular, then  $I^\ast$ is $v_E$-invertible in $E$ and if, in addition, $I$ is finitely generated, then $I^\ast$ is  $t_E$-invertible in $E$ (where $v_E$ and $t_E$ denote respectively  the $v$-operation and the $t$-operation on $E$).

\item[(3)] $I$ is strongly $\ast$-stable if and only if $I$ is Boole $\ast$-regular and $\ast$-stable.
Hence a strongly $\ast$-stable domain is precisely a Boole $\ast$-regular $\ast$-stable domain.
\end{enumerate}

\end{prop}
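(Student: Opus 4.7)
The plan is to dispatch the three parts in order, using that $E:=E(I^\ast)$ is $\ast$-closed (so $\dot{\ast}$ coincides with $\ast$ on $E$-modules) together with the standard identities $(AB)^\ast=(A^\ast B)^\ast$ and $(E:AB)=((E:A):B)$.

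For~(1), I would start from the $\dot{\ast}$-invertibility hypothesis $(I^\ast(E:I^\ast))^{\dot{\ast}}=E$, multiply both sides by $I^\ast$, and reapply $\dot{\ast}$. Since $EI^\ast=I^\ast$, this collapses to
\[
I^\ast=((I^\ast)^2(E:I^\ast))^{\dot{\ast}}=(I^2(E:I^\ast))^\ast,
\]
so $J:=(E:I^\ast)$ witnesses the $\ast$-regularity of~$I$.

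For~(2), Lemma~\ref{lemmaX}(3) gives $(E:T(I^\ast))=E$; applying $(E:-)$ once more yields $T(I^\ast)^{v_E}=(E:E)=E$, and since $T(I^\ast)=I^\ast(E:I^\ast)$, this is exactly $v_E$-invertibility. Under the additional hypothesis that $I=(a_1,\dots,a_n)R$ is finitely generated, I would set $J:=IE$, a finitely generated $E$-ideal. The $\ast$-closure of $E$ as an $R$-module forces $(E:J)=(E:I)=(E:I^\ast)$, hence $J^{v_E}=(I^\ast)^{v_E}=J^{t_E}$ (the last equality because $J$ is f.g.), so $(I^\ast)^{t_E}=J^{t_E}$ is $t_E$-finite; combined with the $v_E$-invertibility, this yields $t_E$-invertibility of $I^\ast$ via the standard fact that $t$-invertibility is equivalent to $v$-invertibility together with $t$-finiteness.

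For~(3), the forward implication is immediate: if $I^\ast=xE$ then $I^\ast$ is trivially $\dot{\ast}$-invertible, and $(I^2)^\ast=((I^\ast)^2)^\ast=(x^2E)^\ast=x^2E=xI^\ast$, giving both $\ast$-stability and Boole $\ast$-regularity. Conversely, from $(I^\ast(E:I^\ast))^{\dot{\ast}}=E$ and $((I^\ast)^2)^\ast=xI^\ast$, I would multiply the second identity by $(E:I^\ast)$ and apply $\dot{\ast}$: the left side telescopes via $(I^\ast\cdot(I^\ast(E:I^\ast))^{\dot{\ast}})^{\dot{\ast}}=(I^\ast E)^{\dot{\ast}}=I^\ast$, and the right side becomes $xE$, whence $I^\ast=xE$. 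The most delicate step throughout is the $v_E$-to-$t_E$ transition in~(2); once the $t_E$-finiteness of $(I^\ast)^{t_E}$ is in place, the remainder is routine bookkeeping with the identities of this section.
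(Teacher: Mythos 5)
Parts (1) and (3) of your argument are correct. For (1) you carry out the ``easy calculation'' that the paper delegates to \cite{HK}, and your converse in (3) --- multiplying $((I^\ast)^2)^{\dot{\ast}}=xI^\ast$ by $(E:I^\ast)$ and letting the left-hand side telescope to $I^\ast$ while the right-hand side becomes $xE$ --- is if anything cleaner than the computation printed in the paper. The first half of (2), deducing $T(I^\ast)^{v_E}=E$ from Lemma \ref{lemmaX}(3), is exactly the paper's argument.

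The finitely generated case of (2), however, has a genuine gap. The ``standard fact'' you invoke --- that $t$-invertibility is equivalent to $v$-invertibility together with $t$-finiteness of the ideal --- is false as stated: one also needs $(E:I^\ast)$ to be $t_E$-finite. Indeed, $t$-invertibility of $I$ forces \emph{both} $I$ and $(R:I)$ to be $t$-finite, and in a completely integrally closed domain that is not a P$v$MD there are finitely generated (hence $t$-finite) $v$-invertible ideals that are not $t$-invertible. Notice that your argument for this half uses nothing about $I$ beyond finite generation and the already-established $v_E$-invertibility, so if it worked it would prove the false general statement ``finitely generated and $v$-invertible implies $t$-invertible''. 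The paper avoids this by showing that the trace $T:=T(I^\ast)=I^\ast(E:I^\ast)$ is itself $\ast$-finite --- this is \cite[Proposition 4.2]{HK} and uses the Clifford $\ast$-regularity of $I$ in an essential way --- whence $T^{\dot{\ast}}=(FE)^{\dot{\ast}}$ for some finitely generated $F\sub T$ and $T^{t_E}=(FE)^{t_E}=(FE)^{v_E}=T^{v_E}=E$ directly. To repair your proof you would need to establish $t_E$-finiteness of $(E:I^\ast)$, or of the product $T$; that is precisely where the regularity hypothesis must re-enter.
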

\begin{proof} (1) is an easy calculation \cite[Proposition 4.6]{HK}.

(2) Set $T:= T(I^\ast)$.
Since $E=(E:T)$ (Lemma \ref{lemmaX}(3)), it follows that $T^{v_E}=E$.

If $I$ is finitely generated, then $T$ is $\ast$-finite in $R$ \cite[Proposition 4.2]{HK}. Write $T^\ast=F^\ast$, with $F$ finitely generated. Then $T^{\dot{\ast}}=(FE)^{\dot{\ast}}$ is $\dot{\ast}$-finite in $E$ and, since  $\dot{\ast}\leq t_E$, $T^{t_E}=(FE)^{t_E}=(FE)^{v_E}=T
^{v_E}=E$.

(3) If $I$ is strongly $\ast$-stable, say $I^\ast=xE$ with $x \in K\sm\{0\}$, then $(I^2)^\ast=xI^\ast$. Thus $I$ is Boole $\ast$-regular and clearly $I^\ast$ is ($\dot{\ast}$-)invertible in $E$. Conversely, if $I$ is Boole $\ast$-regular and $I^\ast$ is $\dot{\ast}$-invertible in $E$ we have $(I^2)^\ast=xI^\ast$, for some $x\in K\sm\{0\}$, and  $I^\ast=(I^2(I^\ast:I^2))^\ast=x(I^\ast(E:I^\ast))^{\dot{\ast}}=xE$. Hence $I$ is strongly $\ast$-stable.
\end{proof}

For the identity the next result  is proved in \cite[Lemma 2.1]{B3}.

\begin{prop} \label{fingen} Let  $\ast:=\ast_\mc F$ be a spectral star operation on $R$.
If $I$ is a $\ast$-finite $\ast$-regular ideal of  $R$, then  $I$ is $\ast$-stable.
\end{prop}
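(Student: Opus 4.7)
The plan is to reduce to the case that $I$ is finitely generated and then show, for each $P \in \mc F$, that $TR_P = ER_P$, where $E := E(I^\ast)$ and $T := T(I^\ast) = I^\ast(E:I^\ast)$. Intersecting over $P \in \mc F$ will give $T^{\dot\ast} = T^\ast = \bigcap_{P \in \mc F} TR_P = \bigcap_{P \in \mc F} ER_P \supseteq E$, and since $T \subseteq E$ also gives $T^\ast \subseteq E^\ast = E$, the conclusion $T^{\dot\ast} = E$ follows; this is precisely the $\dot\ast$-invertibility of $I^\ast$ in $E$, i.e., the $\ast$-stability of $I$.

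For the reduction: $I$ is $\ast$-finite means $I^\ast = J^\ast$ for some finitely generated $J \subseteq I$, and both $\ast$-regularity and $\ast$-stability depend only on $I^\ast$; so I may assume $I$ itself is finitely generated. A basic consequence of the spectral formula $N^\ast = \bigcap_{Q \in \mc F} NR_Q$ is that for every nonzero $R$-submodule $N$ of $K$ and every $P \in \mc F$ one has $N^\ast R_P = NR_P$ (the nontrivial inclusion being $N^\ast \subseteq NR_P$). Applying this with $N = I$ and with $N = IT$, and using the identity $I^\ast = (IT)^\ast$ from Lemma \ref{lemma1}(iii), I obtain $IR_P = I^\ast R_P = (IT)^\ast R_P = (IT)R_P = (IR_P)(TR_P)$.

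The key step is the determinant trick. The set $TR_P$ is an ideal of the subring $A := R_P + TR_P$ of $K$: it is closed under multiplication because $T^2 \subseteq T$ (as $T$ is an ideal of $E$), and stable under $R_P$-multiplication by construction. The module $M := IR_P$ is nonzero, finitely generated over $R_P \subseteq A$, stable under multiplication by $TR_P$, and satisfies $M = (TR_P)\,M$. The determinant trick therefore yields $s \in TR_P$ with $(1+s)M = 0$; as $M \ne 0$ and $K$ is a field, $1+s = 0$, hence $1 \in TR_P$.

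Finally, $TR_P$ is automatically an $E$-module because $ET \subseteq T$, so $1 \in TR_P$ forces $E \subseteq TR_P$ and then $ER_P \subseteq TR_P$; combined with the trivial $TR_P \subseteq ER_P$, this gives $TR_P = ER_P$, and the proof concludes as sketched in the first paragraph. The one substantive step is the determinant trick; I expect the bookkeeping around it --- particularly verifying that $M$ and $TR_P$ live inside an appropriate ambient ring $A$ so that $M$ is finitely generated over $A$ and $TR_P$ is genuinely an ideal of $A$ --- to be the only real effort, since everything else reduces to routine manipulation of the spectral formula.
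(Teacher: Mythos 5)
Your proof is correct and follows essentially the same route as the paper's: reduce to a finitely generated ideal, localize at each $P\in\mc F$ via the spectral formula to get $IR_P=(IR_P)(TR_P)$, deduce $TR_P=ER_P$ by a cancellation argument, and intersect to obtain $T^\ast=E$. The only difference is that you carry out the determinant trick explicitly where the paper cites \cite[Corollary 6.4(b)]{g1} for the same cancellation step.
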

\begin{proof}
 Let $I^\ast=J^\ast$, with $J$ finitely generated. Setting $E:= E(I^\ast)=E(J^\ast)$ and $T:= T(I^\ast)=T(J^\ast)$,
 we have $I^\ast=(JE)^\ast=(JT)^\ast$.
 Localizing at a prime $P\in \mc F$, we obtain $J_PE_P=J_PT_P$.
 Since $J$ is finitely generated and $T \sub E$, it must be $T_P=E_P$ for each $P\in \mc F$ \cite[Corollary 6.4(b)]{g1}.
 Hence $T^\ast= (I^\ast(E:I^\ast))^\ast=E^\ast=E$. We conclude that $I$ is $\ast$-stable.
\end{proof}

\begin{cor} \label{ast=w} Let  $\ast:=\tilde{\ast}$ be a star operation on $R$ spectral and of finite type. If $R$ is Clifford $\ast$-regular, then $\ast=w$.
\end{cor}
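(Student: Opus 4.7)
The plan is to prove $\ast = w$ by showing both $\ast \leq w$ and $w \leq \ast$. The first inequality is automatic, since $\ast = \tilde{\ast} \leq \tilde{t} = w$; because both operations are spectral and of finite type, this is equivalent to $t\text{-Max}(R) \subseteq \astmax(R)$. So everything reduces to the reverse inclusion $\astmax(R) \subseteq t\text{-Max}(R)$.

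For this, I would first reduce to proving that every $\ast$-maximal ideal is a $t$-ideal. The reduction is short: once $M \in \astmax(R)$ is known to be a $t$-ideal, it must sit inside some $t$-maximal $P$, and $P$ is itself a proper $\ast$-ideal (since $\ast \leq t$ forces every $t$-ideal to be a $\ast$-ideal), so $\ast$-maximality of $M$ gives $P = M$.

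To prove the claim, I would argue by contradiction. If some $M \in \astmax(R)$ is not a $t$-ideal, then $M^t$ is a $\ast$-ideal strictly larger than $M$, so $\ast$-maximality forces $M^t = R$; unwinding this produces a finitely generated $J \subseteq M$ with $J^v = R$, equivalently $(R:J) = R$. By Clifford $\ast$-regularity and Proposition \ref{fingen}, $J$ is $\ast$-stable, so $J^\ast$ is $\dot{\ast}$-invertible in $E := E(J^\ast)$. Here is where the hypothesis $\ast = \tilde{\ast}$ bites: Remark \ref{rem1}(2) identifies $E = (J:J)^\ast$, and the easy chain $R \subseteq (J:J) \subseteq (R:J) = R$ collapses $E$ down to $R$. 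Consequently $\dot{\ast}$-invertibility of $J^\ast$ in $E = R$ is simply $\ast$-invertibility of $J^\ast$ in $R$; but $(R:J^\ast) = (R:J) = R$, so this reduces to $J^\ast = R$, contradicting $J \subseteq M \subsetneq R$.

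The main obstacle is the transition from the order-theoretic statement ``$M$ is not a $t$-ideal'' to the sharp hypothesis $(R:J) = R$ for some finitely generated $J \subseteq M$, together with the decisive identification $E = R$ through Remark \ref{rem1}(2); once these are in place, Proposition \ref{fingen} does the rest by upgrading the $\ast$-stability of $J$ to genuine $\ast$-invertibility in $R$. Without the spectral-of-finite-type assumption $\ast = \tilde{\ast}$, the identification $E = (J:J)^\ast$ would not be available and the argument would collapse.
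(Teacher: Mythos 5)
Your proposal is correct and follows essentially the same route as the paper: locate a $\ast$-maximal $M$ with $M^t=R$, pick a finitely generated $J\subseteq M$ with $J^t=R$, show $E(J^\ast)=R$, and invoke Proposition \ref{fingen} to force $J^\ast=R$, a contradiction. The only cosmetic difference is that you identify $E(J^\ast)=(J:J)^\ast=R$ via Remark \ref{rem1}(2), while the paper uses the inclusion $E(J^\ast)\subseteq E(J^t)=R$; both are immediate.
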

\begin{proof}  If $\ast\neq w$ there exists a  $\ast$-maximal ideal $M$ of $R$ such that $M^t=R$. Let $J\sub M$ be a finitely generated ideal such that $J^t=R$. Then $E(J^\ast) \sub E(J^t)=R$. Whence $E(J^\ast)=R$ and, since $J$ is finitely $\ast$-stable (Proposition \ref{fingen}),  $J$ is $\ast$-invertible in $R$. It follows that $J^\ast=J^v=J^t=R$, which is a contradiction since $J^\ast\sub M$.
\end{proof}

\begin{remark} \rm  (1) It is not always true that if  $\ast$ is of finite type and $R$ is Clifford $\ast$-regular, then $\ast=t$. Indeed this is not even true for $\ast$-stable domains and $\ast = \tilde{\ast}$ \cite[Remark 1.7(2)]{GP}.

(2) A Clifford $t$-regular domain need not be finitely $t$-stable.  In fact there exist Noetherian Clifford $t$-regular (indeed Boole $t$-regular) domains with  $t$-ideals of height greater than one \cite[Example 2.4]{KM4}, while a $t$-stable Noetherian domain has $t$-dimension one \cite[Corollary 3.3]{GP2}.

(3)  A Clifford $t$-regular domain is not always Clifford $w$-regular. In fact any pseudo-valuation domain is a Clifford $t$-regular domain \cite[Theorem 2.7]{KM2},  but, since local integrally closed Clifford regular local domains are valuation domains \cite[Theorem 3]{BS},  an integrally closed PVD, which is not  a valuation domain, is never Clifford regular and so it is never Clifford $w$-regular, since in a PVD $w = d$.
\end{remark}

\section{Overrings}

Let $R\sub D$ be an extension of domains. If $\ast_1$ is a star operation on $R$ and $\ast_2$ is a star operation on $D$, the extension is \emph{$(\ast_1, \ast_2)$-compatible} (or  \emph{$\ast_1$ and $\ast_2$ are compatible}) if $(ID)^{\ast_2}=(I^{\ast_1} D)^{\ast_2}$ for every ideal $I$ of $R$ \cite[Section 4]{A}. It is easily seen that, if $\ast_1$ and $\ast_2$ are of finite type, it suffices that this compatibility condition is satisfied by finitely generated ideals.

If $\ast=d, w, t, v$, we will denote by $\ast_D$ the corresponding star-operation on $D$ and
will say that the extension is \emph{$\ast$-compatible} if it is $(\ast, \ast_D)$-compatible.

Also recall that $D$ is \emph{$t$-linked over $R$} if $(Q\cap R)^t \subneq D$ for each prime $t_D$-ideal of $D$ with $Q\cap R\neq (0)$.
It is known that an extension is $t$-linked if and only if it is $w$-compatible  \cite[Proposition 3.10]{Jesse}.

One can generalize in the natural way this definition of compatibility to semistar operations, requiring that the compatibility relation holds for all $R$-submodules of the quotient field $K$ of $R$. Again, for semistar operations of finite type, it suffices to verify the  condition on finitely generated ideals. In particular the notion of $t$-compatibility (respectively,  $w$-compatibity) is the same if one considers $t$ (respectively, $w$) as a star operation or as a (semi)star operation \cite{EF}.

\begin{prop} \label{lemma:comp}
Let $R \sub D$ be an extension of domains with the same quotient field, $\ast_1$  a semistar operation on $R$ and $\ast_2$  a semistar operation on $D$. The following conditions are equivalent:

\begin{enumerate}
\item[(i)]  The extension is $(\ast_1, \ast_2)$-compatible.

\item[(ii)] $\dot{\ast_1} \leq \ast_2$.
\end{enumerate}

If moreover $\ast_1$ and $\ast_2$ are of finite type, the two conditions above are equivalent to:

\begin{enumerate}
\item[(iii)] $F^{\dot{\ast_1}} \sub F^{\ast_2}$ for all (finitely generated) ideals $F$ of $D$.
\end{enumerate}
\end{prop}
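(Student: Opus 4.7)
The plan is to establish (i) $\lra$ (ii) as the heart of the proposition and then treat (iii) as a routine refinement under the finite-type hypothesis. The key preliminary observation I would record first is that if $J$ is a $D$-submodule of $K$, then $J^{\ast_1}$ is again a $D$-submodule: for each $d\in D$ we have $dJ\sub J$, hence $dJ^{\ast_1}=(dJ)^{\ast_1}\sub J^{\ast_1}$. Consequently $J^{\dot{\ast_1}}=J^{\ast_1}$ and $J^{\ast_1}D=J^{\ast_1}$ whenever $J$ is already a $D$-submodule; the same argument applied to $\ast_2$ shows that $(ID)^{\ast_2}$ is a $D$-submodule for any $R$-submodule $I$.

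For (i) $\ra$ (ii), I would take an arbitrary $D$-submodule $J$ of $K$ and apply the compatibility hypothesis to $I:=J$ viewed as an $R$-submodule. Since $JD=J$ and $J^{\ast_1}D=J^{\ast_1}$ by the observation above, the equality $(JD)^{\ast_2}=(J^{\ast_1}D)^{\ast_2}$ collapses to $J^{\ast_2}=(J^{\ast_1})^{\ast_2}$, and monotonicity then forces $J^{\dot{\ast_1}}=J^{\ast_1}\sub J^{\ast_2}$. For (ii) $\ra$ (i), I fix an $R$-submodule $I$. The inclusion $(ID)^{\ast_2}\sub(I^{\ast_1}D)^{\ast_2}$ is immediate from $I\sub I^{\ast_1}$ and monotonicity. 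For the reverse, $I\sub ID$ gives $I^{\ast_1}\sub (ID)^{\ast_1}=(ID)^{\dot{\ast_1}}\sub (ID)^{\ast_2}$, where the last inclusion is (ii); since $(ID)^{\ast_2}$ is a $D$-submodule, this upgrades to $I^{\ast_1}D\sub (ID)^{\ast_2}$, and closing under $\ast_2$ together with idempotency yields $(I^{\ast_1}D)^{\ast_2}\sub (ID)^{\ast_2}$.

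Under the finite-type hypothesis the equivalence with (iii) is a short reduction. The implication (ii) $\ra$ (iii) is immediate by specialization to finitely generated ideals of $D$. For (iii) $\ra$ (ii), I would invoke the fact that $\dot{\ast_1}$ inherits the finite-type property from $\ast_1$ (Proposition 2.8 of \cite{fl01}), so that for any $D$-submodule $J$ one has $J^{\dot{\ast_1}}=\bigcup\{F^{\dot{\ast_1}}\mid F\sub J,\ F\text{ a finitely generated fractional ideal of }D\}$; applying (iii) to each such $F$ and taking the union inside $J^{\ast_2}$ gives $J^{\dot{\ast_1}}\sub J^{\ast_2}$.

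There is no genuine obstacle in the argument. The only point that requires care is the repeated use of the principle that each $\ast_i$ preserves $D$-submodule structure — in particular, keeping straight when a module should be treated as an $R$-submodule or as a $D$-submodule, and noticing that $J^{\ast_1}=J^{\dot{\ast_1}}$ only because the restricted operation is defined on exactly the class of $D$-submodules. Once this piece of bookkeeping is in place, each implication reduces to a single application of monotonicity or idempotency of the operation at hand.
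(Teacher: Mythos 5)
Your proof is correct and follows essentially the same route as the paper's: both directions of (i)$\Leftrightarrow$(ii) rest on the observation that the closures preserve $D$-module structure (so $J^{\dot{\ast_1}}=J^{\ast_1}$ for $D$-modules and $I^{\ast_1}D\subseteq(ID)^{\dot{\ast_1}}\subseteq(ID)^{\ast_2}$ for the reverse inclusion), and (ii)$\Leftrightarrow$(iii) is the standard finite-type reduction, which you merely spell out where the paper calls it obvious. No gaps.
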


\begin{proof}
(i) $\Rightarrow$ (ii) Let $F$ be a $D$-submodule of $K$. Obviously $F$ is also an $R$-module, so $F^{\dot{\ast_1}} = F^{\ast_1} \sub (F^{\ast_1}D)^{\ast_2} = (FD)^{\ast_2} = F^{\ast_2}$. Thus $\dot{\ast_1} \leq \ast_2$.

(ii) $\Rightarrow$ (i) Let $F$ be an $R$-submodule of $K$. Then $(FD)^{\ast_2} \subseteq (F^{\ast_1}D)^{\ast_2} \subseteq ((F^{\ast_1}D)^{\dot{\ast_1}})^{\ast_2} = ((FD)^{\dot{\ast_1}})^{\ast_2} = (FD)^{\ast_2}$. Thus $(FD)^{\ast_2} = (F^{\ast_1}D)^{\ast_2}$ and so the extension is $(\ast_1, \ast_2)$-compatible.

(ii) $\Leftrightarrow$ (iii) is obvious, since finite type semistar operations are completely determined by their behavior on finitely generated ideals.
\end{proof}

\begin{cor} \label{cor:t-comp}
Let $R \sub D$ be an extension of domains with the same quotient field. Then:

\begin{enumerate}

\item[(1)] The extension is $t$-compatible if and only if $D^t = D$ (where $t$ is the $t$-semistar operation of $R$).

\item[(2)] The extension is $w$-compatible (equivalently, $D$ is $t$-linked over $R$) if and only if $D^w = D$ (where $w$ is the $w$-semistar operation of $R$).

\end{enumerate}
\end{cor}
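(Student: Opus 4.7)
The strategy is to apply Proposition~\ref{lemma:comp} with $(\ast_1,\ast_2)=(t,t_D)$ for part~(1) and $(\ast_1,\ast_2)=(w,w_D)$ for part~(2). In each case the proposition translates the compatibility of $R\subseteq D$ into the inequality $\dot{\ast_1}\leq \ast_2$, so the task reduces to showing that this inequality is equivalent to $D^{\ast_1}=D$. The parenthetical reformulation of $w$-compatibility as $t$-linkedness in~(2) is just the fact recalled in the paragraph preceding Proposition~\ref{lemma:comp}, so no extra work is needed there.

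One direction is immediate in both parts: if $\dot{\ast_1}\leq \ast_2$, then $D\subseteq D^{\ast_1}=D^{\dot{\ast_1}}\subseteq D^{\ast_2}=D$, forcing $D^{\ast_1}=D$. Conversely, suppose $D^{\ast_1}=D$. Then $\dot{\ast_1}$ is a genuine (semi)star operation on $D$, and it is of finite type because $\ast_1$ is of finite type and this property passes to the restriction. For part~(1) we can stop here: since $t_D$ dominates every (semi)star operation of finite type on $D$ (as $\ast_f\leq t$ for every $\ast$, noted in Section~1), we get $\dot t\leq t_D$ and Proposition~\ref{lemma:comp} delivers $t$-compatibility.

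For part~(2) the same argument yields only $\dot w\leq t_D$, which is weaker than what is needed, and producing the finer inequality $\dot w\leq w_D$ is the one real obstacle. The key observation is that $w=\tilde t$ is spectral of finite type on $R$, so by the characterization from \cite{FH2000} recalled in the preliminaries, $(I\cap J)^w=I^w\cap J^w$ for all $I,J\in \ol{\mc F}(R)$. Since every $D$-submodule of $K$ is in particular an $R$-submodule, this same identity persists for $\dot w$ on $D$-submodules of $K$; combined with finite type, that characterization then forces $\dot w=\widetilde{\dot w}$, i.e.\ $\dot w$ is spectral of finite type on $D$. Invoking the monotonicity $\ast_1\leq \ast_2\Rightarrow \widetilde{\ast_1}\leq \widetilde{\ast_2}$ from Section~1 together with $\dot w\leq t_D$, we obtain $\dot w=\widetilde{\dot w}\leq \widetilde{t_D}=w_D$, and a final appeal to Proposition~\ref{lemma:comp} completes the proof. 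The entire argument is formal once this spectrality step for $\dot w$ is in place.
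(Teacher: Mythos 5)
Your proof is correct and follows essentially the same route as the paper: both directions of (1) rest on Proposition~\ref{lemma:comp} together with $t_D$ being the largest finite-type star operation on $D$, and for (2) the paper itself only sketches "a similar argument, since the $w$-operation is the largest star operation spectral and of finite type" (otherwise deferring to the literature). Your write-up merely fills in that sketch, verifying via the intersection criterion from \cite{FH2000} that $\dot w$ is spectral of finite type and hence bounded by $w_D$, which is a legitimate and complete rendering of the argument the authors had in mind.
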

\begin{proof}
(1) If the extension is $t$-compatible, from Proposition \ref{lemma:comp} we have $D^{t} = D^{\dot{t}} \subseteq D^{t_D} = D$. Conversely, if $D^{\dot{t}}  = D^{t}= D$, the restriction of $\dot{t}$ to the fractional ideals of $D$ is a star operation of finite type on $D$. So, it is smaller than the $t$-operation of $D$, which is the largest star operation of finite type on $D$. Hence, by Proposition \ref{lemma:comp}((iii) $\Rightarrow$ (i)), the extension is $t$-compatible.

(2) This result can be proved easily with a similar argument, since the $w$-operation is the largest star operation spectral and of finite type. However, since $w$-compatible is equivalent to $t$-linked \cite[Proposition 3.10]{Jesse}, (2) is well-known and follows by \cite[Proposition 2.13(a)]{DHLZ}.
\end{proof}

Several well-known facts about compatibility follow easily from Corollary \ref{cor:t-comp}. Since $w \leq t$,  a $t$-compatible extension is also $w$-compatible (cf. \cite[p. 1475]{Jesse}). Moreover, since for a flat $R$-module $F$ contained in the quotient field of $R$, $F^t = F$ (the proof of Theorem 1.4 in \cite{PT} works also in this more general case and not only for ideals), a flat extension is $t$-compatible (cf. \cite[Lemma 3.4(iii)]{K}).  In addition, it is clear from  Corollary \ref{cor:t-comp} that the endomorphism rings of $t$ and $v$-ideals are $t$-compatible and the endomorphism rings of $w$-ideals are $w$-compatible (i.e., $t$-linked).

If  $\mc S$ is a multiplicative system of ideals of $R$, we denote by $R_\mc S:=\bigcup \{(R:I)\,;\; I\in \mc S\}$  the \emph{generalized ring of fractions of $R$ with respect to $\mc S$}. Generalized rings of fractions are $t$-compatible \cite[Lemma 3.4(iii)]{K}.

An extension $R \subseteq D$ is \emph{locally $t$-compatible} if for every prime ideal $P$ of $R$, the extension $R_P \subseteq D_{R\setminus P}$ is $t$-compatible \cite[p. 1477]{Jesse}.  The following result solves a problem posed in \cite[p. 1481]{Jesse}, at least for an extension of domains with the same quotient field.

\begin{prop}
Let $R \subseteq D$ be a locally $t$-compatible extension of domains with the same quotient field. Then the extension is $t$-compatible.
\end{prop}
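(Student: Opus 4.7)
The plan is to invoke Corollary \ref{cor:t-comp}(1), which reduces $t$-compatibility of $R \sub D$ to the equality $D^t = D$, where $t$ denotes the $t$-semistar operation of $R$. Since $D \sub D^t$ always holds, the content is the reverse inclusion. The strategy is to show, for each prime $P$ of $R$, that $D^t \sub D_{R \sm P}$ (using the local $t$-compatibility at $P$), and then to intersect over all $P$.

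First I would record the intersection formula $D = \bc_{P \in \spec(R)} D_{R \sm P}$. The inclusion $D \sub D_{R \sm P}$ is immediate. For the reverse, observe that for any maximal ideal $M$ of $D$, the contraction $P := M \cap R$ is a prime of $R$ satisfying $R \sm P \sub D \sm M$, so $D_{R \sm P} \sub D_M$; hence $\bc_{P \in \spec(R)} D_{R \sm P} \sub \bc_{M \in \Max(D)} D_M = D$. Next, fix $x \in D^t$. Since $t$ is of finite type, choose a finitely generated $R$-submodule $J \sub D$ of $K$ with $x \in J^v = (R:(R:J))$, i.e.\ $x(R:J) \sub R$. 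Writing $J = (a_1, \ldots, a_n)R$, we have $(R:J) = \bc_{i=1}^n \tfrac{1}{a_i} R$, and finite intersections commute with localization, so $(R:J) R_P = \bc_i \tfrac{1}{a_i} R_P = (R_P : JR_P)$. Therefore $x \cdot (R_P : JR_P) \sub R_P$, which means $x \in (R_P:(R_P : JR_P)) = (JR_P)^{v_{R_P}}$. Since $JR_P$ is a finitely generated $R_P$-submodule contained in $D_{R \sm P}$, this gives $x \in (JR_P)^{v_{R_P}} \sub (D_{R \sm P})^{t_{R_P}}$. The locally $t$-compatible hypothesis, applied via Corollary \ref{cor:t-comp}(1) to the extension $R_P \sub D_{R \sm P}$, yields $(D_{R \sm P})^{t_{R_P}} = D_{R \sm P}$. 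Hence $x \in D_{R \sm P}$ for every prime $P$ of $R$, and the intersection formula forces $x \in D$.

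The main technical point is the commutation $(R:J) R_P = (R_P : J R_P)$ for finitely generated $J$, which comes directly from the description $(R:J) = \bc_i \tfrac{1}{a_i} R$ and ensures the transfer $J^v R_P \sub (JR_P)^{v_{R_P}}$. Once this localization property of the $v$-operation on finitely generated modules is in hand, the argument is a clean passage from the local $t$-closure equalities supplied by the hypothesis to the global equality $D^t = D$; I do not anticipate any further obstacle, as the finite type of $t$ and the overring intersection formula fit together transparently with Corollary \ref{cor:t-comp}(1).
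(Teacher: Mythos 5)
Your proof is correct and follows essentially the same route as the paper: reduce to showing $D^{t_R}\subseteq (D_{R\setminus P})^{t_{R_P}}=D_{R\setminus P}$ for each prime $P$ via Corollary \ref{cor:t-comp}, then intersect over all primes. The only difference is that you verify the inclusion $D^{t_R}\subseteq (D_{R\setminus P})^{t_{R_P}}$ by a direct computation with finitely generated ideals, whereas the paper obtains it by citing the $t$-compatibility of the flat extension $R\subseteq R_P$ together with Proposition \ref{lemma:comp}; both are fine.
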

\begin{proof}
For every prime $P$ of $R$ we have $D^{t_R} \subseteq (D_{R \setminus P}^{t_R})^{t_{R_P}} = (D_{R \setminus P})^{t_{R_P}}$ by Proposition \ref{lemma:comp} ((i) $\Leftrightarrow$ (ii)), since the extension $R\subseteq R_P$ is $t$-compatible. By the hypothesis, the extension $R_P \subseteq D_{R\setminus P}$ is $t$-compatible, so $(D_{R \setminus P})^{t_{R_P}} = D_{R \setminus P}$ by Corollary \ref{cor:t-comp}. Then $D^{t_R} \subseteq \bigcap_{P \in \spec(R)} DR_P = D$ and the extension $R \subseteq D$ is $t$-compatible, again by Corollary \ref{cor:t-comp}.
\end{proof}

\begin{lemma} \label{lemma2} Let $R\sub D$ be a $(\ast_1, \ast_2)$-compatible extension of domains, where $\ast_1$ is a star operation on $R$ and $\ast_2$ is a star operation on $D$. If $I$ is a $\ast_1$-regular (respectively, Boole $\ast_1$-regular, $\ast_1$-stable, strongly $\ast_1$-stable)  ideal of $R$, then $ID$ is a $\ast_2$-regular (respectively, Boole $\ast_1$-regular, $\ast_1$-stable, strongly $\ast_1$-stable)   ideal of $D$. Hence, if $R$ is Clifford $\ast_1$-regular (respectively, Boole $\ast_1$-regular, $\ast_1$-stable, strongly $\ast_1$-stable)  and each $\ast_2$-ideal of $D$ is of type $(ID)^{\ast_2}$, for some ideal $I$ of $R$, $D$ is Clifford $\ast_2$-regular (respectively, Boole $\ast_1$-regular, $\ast_1$-stable, strongly $\ast_1$-stable).
\end{lemma}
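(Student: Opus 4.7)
The plan is to handle each property by fixing a concrete algebraic witness on the $R$-side, pushing it forward via the compatibility identity $(ID)^{\ast_2}=(I^{\ast_1}D)^{\ast_2}$, and verifying that the pushed identity witnesses the corresponding property on the $D$-side. For Clifford regularity, Boole regularity and strong stability this amounts to a one-line calculation each; the substantive case is stability, where the endomorphism ring intervenes and the witness must be transferred into an \emph{a priori} larger ring.

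For $\ast_1$-regularity, starting from $I^{\ast_1}=(I^{2}J)^{\ast_1}$ and using compatibility twice we get
\[
(ID)^{\ast_2}=(I^{\ast_1}D)^{\ast_2}=((I^{2}J)^{\ast_1}D)^{\ast_2}=(I^{2}JD)^{\ast_2}=((ID)^{2}(JD))^{\ast_2},
\]
so $ID$ is $\ast_2$-regular. The same trick applied to $(I^{2})^{\ast_1}=xI^{\ast_1}$ yields $((ID)^{2})^{\ast_2}=x(ID)^{\ast_2}$, proving transfer of Boole regularity.

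The stable case I would attack by first recording the characterization (an easy consequence of the definition) that $I$ is $\ast_1$-stable if and only if $(IJ)^{\ast_1}=E(I^{\ast_1})$ for some ideal $J$ of $R$; one direction takes $J=(E(I^{\ast_1}):I^{\ast_1})$, and the converse uses $J\sub(E(I^{\ast_1}):I^{\ast_1})$ together with $E(I^{\ast_1})$ being a $\ast_1$-ideal. Set $E_{1}:=E(I^{\ast_1})$ and $E_{2}:=E((ID)^{\ast_2})$. Compatibility immediately yields
\[
((ID)(JD))^{\ast_2}=(IJD)^{\ast_2}=((IJ)^{\ast_1}D)^{\ast_2}=(E_{1}D)^{\ast_2}.
\]
The crux, and the main obstacle, is to identify $E_{2}$ with $(E_{1}D)^{\ast_2}$. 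The inclusion $(E_{1}D)^{\ast_2}\sub E_{2}$ is immediate since $E_{1}D\sub E_{2}$ and $E_{2}$ is a $\ast_2$-ideal. For the reverse, pick $e\in E_{2}$; multiplying $e\cdot ID\sub(ID)^{\ast_2}$ by $JD$ and passing to $\ast_2$-closures gives
\[
e(E_{1}D)^{\ast_2}=e(IJD)^{\ast_2}=(eIJD)^{\ast_2}\sub((ID)^{\ast_2}JD)^{\ast_2}=((ID)(JD))^{\ast_2}=(E_{1}D)^{\ast_2},
\]
so $e$ is an endomorphism of $(E_{1}D)^{\ast_2}$. Since $E_{1}D$ is a subring of $K$, so is $(E_{1}D)^{\ast_2}$ (it is closed under multiplication by the standard identity $(AB)^{\ast_2}=(A^{\ast_2}B^{\ast_2})^{\ast_2}$), and any subring $T\sub K$ satisfies $(T:T)=T$; therefore $e\in(E_{1}D)^{\ast_2}$, proving the equality. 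The identity $((ID)(JD))^{\ast_2}=E_{2}$ then shows $JD\sub(E_{2}:(ID)^{\ast_2})$ and witnesses that $(ID)^{\ast_2}$ is $\dot{\ast_2}$-invertible in $E_{2}$, i.e.\ that $ID$ is $\ast_2$-stable.

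Strong $\ast_1$-stability ($I^{\ast_1}=xE_{1}$) transfers using the identity just established: $(ID)^{\ast_2}=(xE_{1}D)^{\ast_2}=x(E_{1}D)^{\ast_2}=xE_{2}$; alternatively one appeals to Proposition~\ref{prop1}(3), according to which strong stability is Boole regularity together with stability, both already handled. The final global assertion is a formal consequence: applying the four ideal-wise implications to every ideal of $R$, and using the hypothesis that every $\ast_2$-ideal of $D$ is of the form $(ID)^{\ast_2}$, shows that every class in $\cS_{\ast_2}(D)$ is regular (respectively idempotent, stable, strongly stable), so $D$ inherits the global property.
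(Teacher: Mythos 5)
Your proof is correct, and for the Clifford-regular and Boole-regular cases it is essentially the paper's argument: fix the defining identity on the $R$-side and push it through $(ID)^{\ast_2}=(I^{\ast_1}D)^{\ast_2}$ (the paper uses the canonical witness $(I^{\ast_1}:I^2)$ from Lemma \ref{lemma1} where you use an arbitrary $J$, which is immaterial). Where you genuinely diverge is the stability case. The paper's proof is a one-line containment chain: since $1\in (I^{\ast_1}(I^{\ast_1}:(I^{\ast_1})^2))^{\ast_1}$, compatibility together with the easy inclusion $(I^{\ast_1}:(I^{\ast_1})^2)D\sub ((ID)^{\ast_2}:((ID)^{\ast_2})^2)$ yields $1\in ((ID)^{\ast_2}((ID)^{\ast_2}:((ID)^{\ast_2})^2))^{\ast_2}$, and this $E((ID)^{\ast_2})$-submodule of $E((ID)^{\ast_2})$ containing $1$ must be all of $E((ID)^{\ast_2})$ — no identification of endomorphism rings is required, because invertibility only needs the containment $1\in(\cdots)^{\ast_2}$. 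You instead establish the stronger structural identity $E((ID)^{\ast_2})=(E(I^{\ast_1})D)^{\ast_2}$ and transport the explicit inverse $JD$; your verification of that identity is sound, although the detour through ``$(T:T)=T$ for subrings'' is unnecessary (once $e(E_1D)^{\ast_2}\sub (E_1D)^{\ast_2}$ and $1\in (E_1D)^{\ast_2}$, you are done). What your route buys is a direct one-line treatment of strong stability from $I^{\ast_1}=xE_1$, namely $(ID)^{\ast_2}=x(E_1D)^{\ast_2}=xE((ID)^{\ast_2})$, whereas the paper falls back on the decomposition of strong stability as Boole regularity plus stability via Proposition \ref{prop1}(3); the cost is length. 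The remaining steps (the preliminary characterization of stability by $(IJ)^{\ast_1}=E(I^{\ast_1})$, and the passage from the ideal-wise statements to the global one) are correct and match the paper.
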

\begin{proof}  Let $I$ be a  $\ast_1$-regular ideal of $R$. Hence $(ID)^{\ast_2}=(I^{\ast_1} D)^{\ast_2}=((I^2(I^{\ast_1}:I^2))^{\ast_1} D)^{\ast_2}=(I^2(I^{\ast_1}:I^2)D)^{\ast_2}\sub((ID)^2((ID)^{\ast_2}:(ID)^2))^{\ast_2}\sub (ID)^{\ast_2}$. Whence $ID$ is $\ast_2$-regular.

If $I$ is Boole $\ast_1$-regular, $I^{\ast_1} = x(I^2)^{\ast_1}$, for some nonzero $x$ in $K$. So, by $(\ast_1, \ast_2)$-compatibility, we have $(ID)^{\ast_2} = (I^{\ast_1} D)^{\ast_2} = (x(I^2)^{\ast_1} D)^{\ast_2} = (xI^2 D)^{\ast_2} = x((ID)^2)^{\ast_2}$. Hence $ID$ is Boole $\ast_2$-regular.

If $I$ is $\ast_1$-stable,  $1 \in R=(I^{\ast_1}(I^{\ast_1}:(I^{\ast_1})^2))^{\ast_1} \subseteq ((I^{\ast_1}(I^{\ast_1}:(I^{\ast_1})^2))^{\ast_1}D)^{\ast_2} \subseteq ((ID)^{\ast_2}((ID)^{\ast_2}:(ID^{\ast_2})^2))^{\ast_2} $ by repeated use of  $(\ast_1, \ast_2)$-compatibility. So, $ID$ is $\ast_2$-stable.

If $I$ is strongly $\ast_1$-stable, then $I$ is Boole $\ast_1$-regular and so $ID$ is Boole $\ast_2$-regular. Since $ID$ is also $\ast_2$-stable by the above, then $ID$ is strongly $\ast_2$-stable (Proposition \ref{prop1}(3)).
\end{proof}

Among other cases, each ideal $I$ of the overring $D$ is extended from $R$ (i.e., $I = JD$ for a (fractional) ideal $J$ of $R$) if $D$ is a fractional overring, or $D$ is flat, or $D$ is Noetherian \cite{sega}.  Note that, since $D\sub K$, each finitely generated ideal of $D$ is a fractional ideal of $R$ and so it is trivially extended from $R$.

The next proposition was proved for regularity in \cite[Lemma 2.2]{KM1} and \cite[Proposition 4.1]{sega}.

\begin{prop} \label{overrings1} Let $R\sub D\sub K$, $\ast_1$ a star operation on $R$ and $\ast_2$ a star operation  on $D$.  Assume that the extension is $(\ast_1, \ast_2)$-compatible and that $R$ is Clifford $\ast_1$-regular (respectively, Boole $\ast_1$-regular, $\ast_1$-stable, strongly $\ast_1$-stable). Assume also that one of the following conditions holds:
\begin{itemize}
\item[(a)] $D$ is flat over $R$ (for example $D$ is a ring of fractions of $R$);
\item[(b)]  $D$ is a fractional overring of $R$.
\item[(c)] Each ideal of $D$ is $\ast_2$-finite.
\end{itemize}
Then $D$ is Clifford $\ast_2$-regular (respectively, Boole $\ast_2$-regular, $\ast_2$-stable, strongly $\ast_2$-stable).
\end{prop}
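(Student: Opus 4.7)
My plan is to reduce the proposition directly to Lemma \ref{lemma2}. That lemma already asserts that if each $\ast_2$-ideal $J$ of $D$ satisfies $J^{\ast_2} = (ID)^{\ast_2}$ for some ideal $I$ of $R$, then the transfer from $R$ to $D$ of Clifford $\ast_1$-regularity---and likewise of its Boole, stable, and strongly stable variants---is automatic. So the entire proof reduces to verifying this extension property of $\ast_2$-ideals of $D$ in each of the three cases.

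Cases (a) and (b) are immediate from the remark made just before the proposition: for a flat overring of $R$ inside $K$, every ideal of $D$ is of the form $ID$ for some fractional $R$-ideal $I$ (the content attributed to \cite{sega}, via for instance $I := J \cap R$), and for a fractional overring every $D$-ideal is automatically a fractional $R$-ideal (with $JD = J$ since $J$ is a $D$-module), hence again of the required form. In both cases the hypothesis of Lemma \ref{lemma2} holds for \emph{every} ideal of $D$, so Lemma \ref{lemma2} gives the conclusion directly.

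For case (c), take any $\ast_2$-ideal $J = J^{\ast_2}$ of $D$ and use $\ast_2$-finiteness to write $J^{\ast_2} = F^{\ast_2}$ with $F = x_1 D + \cdots + x_n D$ finitely generated. Since each $x_i$ lies in $K$, a common $R$-denominator for $x_1,\dots,x_n$ exists, so $I := x_1 R + \cdots + x_n R$ is a finitely generated fractional $R$-ideal and $ID = F$. Therefore $J^{\ast_2} = (ID)^{\ast_2}$, and Lemma \ref{lemma2} again applies.

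The substantive inequalities have already been absorbed into the proof of Lemma \ref{lemma2}; what remains here is the purely bookkeeping step of exhibiting an $R$-ideal $I$ whose $D$-extension realizes the given $\ast_2$-ideal of $D$. I do not foresee any real obstacle beyond a moment's care in case (c) to simultaneously clear denominators of the generators of $F$, so that $I$ is indeed a fractional $R$-ideal.
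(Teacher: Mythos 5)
Your proof is correct and follows essentially the same route as the paper's: in all three cases you reduce to Lemma \ref{lemma2} by exhibiting each $\ast_2$-ideal of $D$ (up to $\ast_2$-closure) as the extension of an ideal of $R$, quoting the extension property from \cite{sega} for (a) and (b). In case (c) you take the extending ideal to be the $R$-module generated by the generators of the finitely generated $D$-ideal $F$, rather than treating $F$ itself as a fractional $R$-ideal as the paper does; this is a harmless, indeed slightly more careful, variant of the same argument.
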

\begin{proof} In the cases (a) and (b), each ideal of $D$ is extended from $R$ \cite[Propositions 3.8 and 3.4]{sega}. By hypothesis the extension is $(\ast_1, \ast_2)$-compatible. Hence we can apply Lemma \ref{lemma2}.

(c) Let $I$ be an ideal of $D$. By hypothesis $I^{\ast_2} = J^{\ast_2}$ for some finitely generated ideal $J$ of $D$. Clearly $J$ is a fractional ideal of $R$, thus it is $\ast_1$-regular in $R$ and so $\ast_2$-regular in $D$ by Lemma \ref{lemma2}. Hence also $I$ is $\ast_2$-regular. The other cases are similar.
\end{proof}

\begin{cor} \label{overrings} Let $R\sub D\sub K$, $\ast = d, w, t$ on $R$ and $\ast_D$ the corresponding star operation on $D$. Assume that $R$ is Clifford $\ast$-regular (respectively, Boole $\ast$-regular, $\ast$-stable, strongly $\ast$-stable) and that one of the following conditions holds:
\begin{itemize}
\item[(a)] $D$ is flat over $R$ (for example $D$ is a ring of fractions of $R$);
\item[(b)]  $D$ is a $\ast$-compatible fractional overring;
\item[(c)]  $D$ is $\ast$-compatible and Noetherian.
\end{itemize}
Then $D$ is Clifford $\ast_D$-regular (respectively, Boole $\ast_D$-regular,  $\ast_D$-stable, strongly $\ast_D$-stable).
\end{cor}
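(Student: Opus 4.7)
The plan is to reduce each of the three cases to the corresponding clause of Proposition \ref{overrings1}, with $\ast_1=\ast$ on $R$ and $\ast_2=\ast_D$ on $D$. That proposition already does all of the work once two things are in place: the extension is $(\ast,\ast_D)$-compatible, and one of its structural hypotheses (a), (b), (c) on $D$ holds. So the task is simply to verify, in each of the three situations listed in the corollary, that the compatibility hypothesis of Proposition \ref{overrings1} is met.

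For case (a), when $D$ is flat over $R$, compatibility for $\ast=d$ is trivial, and for $\ast=t$ the paragraph following Corollary \ref{cor:t-comp} records that flat extensions are $t$-compatible (because $F^t=F$ for every flat $R$-submodule $F$ of $K$). For $\ast=w$, the same paragraph observes that every $t$-compatible extension is $w$-compatible, since $w\leq t$ and being $w$-compatible is equivalent to being $t$-linked. Thus in all three choices of $\ast$ the flat hypothesis supplies $(\ast,\ast_D)$-compatibility, and Proposition \ref{overrings1}(a) delivers the conclusion.

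In cases (b) and (c), $(\ast,\ast_D)$-compatibility is assumed outright, so only the auxiliary hypothesis of Proposition \ref{overrings1}(b) or (c) needs to be checked. For (b) this is immediate: the assumption is literally that $D$ is a fractional overring of $R$. For (c), since $D$ is Noetherian every ideal $I$ of $D$ is finitely generated, so $I^{\ast_D}=I^{\ast_D}$ with $I$ itself a finitely generated witness; hence every ideal of $D$ is $\ast_D$-finite, which is the hypothesis of Proposition \ref{overrings1}(c). There is no genuine obstacle here: the corollary is essentially a bookkeeping consequence of Proposition \ref{overrings1}, and the only mildly non-trivial point is the passage from flatness to $\ast$-compatibility in case (a), which has already been isolated in the discussion surrounding Corollary \ref{cor:t-comp}.
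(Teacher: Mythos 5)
Your proposal is correct and follows exactly the paper's route: the published proof is the one-line ``Apply Proposition \ref{overrings1} (for (a), recall that a flat extension is always $t$-compatible),'' and your three case-checks merely make explicit the same reduction (flatness gives $t$- and hence $w$-compatibility via the discussion after Corollary \ref{cor:t-comp}, and Noetherian gives that every ideal of $D$ is $\ast_D$-finite).
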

\begin{proof} Apply Proposition \ref{overrings1} (for (a), recall that a flat extension is always $t$-compatible).
\end{proof}

For the $d$ and the $w$-operations the statements for stability in Corollary \ref{overrings} are particular cases of more general results (cf. \cite[Theorem 5.1]{O2} and \cite[Corollary 2.2]{GP}).

\begin{remark} \label{rm1}
\rm Note that, if $\ast$ is a star operation on $R$, an overring $D$ of $R$ is a $\ast$-ideal if and only if $D=E(I)$ for some integral $\ast$-ideal $I$ of $R$. Indeed, if $D$ is a fractional overring of $R$, $D = x^{-1} I$ for some integral ideal $I$ of $R$ and $x\in R$. If, in addition,  $D= D^\ast =  x^{-1}I^{\ast}$, we have $I = I^\ast$. Moreover, $(I:I) = ( x^{-1} I :  x^{-1} I) = (D:D) = D$. The converse is clear.

In particular, by Lemma \ref{lemma:comp}, $D$ is  $t$-compatible (respectively, $t$-linked) if and only if $D=(I:I)$ for some $t$-ideal (respectively, $w$-ideal) $I$ of $R$.
\end{remark}

If $\mc S$ is a multiplicative system of ideals of $R$, the \emph{saturation of $\mc S$}, here denoted by  $\ol{\mc S}$, is the multiplicative system of ideals consisting of all the ideals of $R$ containing some ideal in $\mc S$. Since $R_\mc S=R_{\ol{\mc S}}$, we can always assume that $\mc S$ is saturated, that is $\mc S= \ol{\mc S}$. We say that  $\mc S$ is \emph{$v$-finite} if each $t$-ideal $I\in \ol{\mc S}$ contains a finitely generated ideal $J$ such that $J^t=J^v \in \ol{\mc S}$ \cite{G}. Clearly finitely generated multiplicative systems of ideals are $v$-finite.

We denote by $t_\mc S$ the $t$-operation on the generalized ring of fractions $R_\mc S:=\cup\{(R:I); I\in \mc S\}$.

\begin{prop} \label{prop:generalized}  Let $\mc S$ be a $v$-finite multiplicative system of ideals. If $R$ is Clifford $t$-regular (respectively, Boole $t$-regular, $t$-stable, strongly $t$-stable), then $R_\mc S$ is Clifford $t_\mc S$-regular (respectively, Boole $t_\mc S$-regular, $t_\mc S$-stable, strongly $t_\mc S$-stable).
\end{prop}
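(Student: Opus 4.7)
The plan is to apply Lemma \ref{lemma2} to the extension $R \subseteq R_\mc S$, with $\ast_1 = t$ on $R$ and $\ast_2 = t_\mc S$ on $R_\mc S$, thereby handling all four transfer statements (Clifford $t$-regularity, Boole $t$-regularity, $t$-stability, strong $t$-stability) uniformly in one stroke. Two hypotheses must be verified: that the extension is $(t, t_\mc S)$-compatible, and that every $t_\mc S$-ideal $J$ of $R_\mc S$ is of the form $(IR_\mc S)^{t_\mc S}$ for some fractional ideal $I$ of $R$.

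The first hypothesis is immediate from the paragraph preceding Proposition \ref{prop:generalized}: generalized rings of fractions are always $t$-compatible by \cite[Lemma 3.4(iii)]{K}, independently of whether $\mc S$ is $v$-finite. By Proposition \ref{lemma:comp} this is equivalent to $\dot{t} \leq t_\mc S$, which is exactly the compatibility needed to invoke Lemma \ref{lemma2}.

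The second hypothesis is where the $v$-finiteness of $\mc S$ plays its essential role. Given a $t_\mc S$-ideal $J$ of $R_\mc S$ (after clearing denominators so that $J$ is integral), the contraction $I := J \cap R$ is a nonzero integral ideal of $R$; one checks it is in fact a $t$-ideal by using $t$-compatibility together with the stability of $t$-ideals under intersection with $R$. Under $v$-finiteness, the standard extension/contraction correspondence developed in \cite{G} between $t$-ideals of $R$ not meeting $\mc S$ and $t_\mc S$-ideals of $R_\mc S$ then yields $(IR_\mc S)^{t_\mc S} = J$. The main obstacle lies precisely here: without the $v$-finite hypothesis, there can be $t_\mc S$-ideals of $R_\mc S$ that fail to be recovered from $R$-ideals by extension, and the $v$-finite condition is what forces each $t_\mc S$-ideal to admit a finite approximation pulled back from $R$.

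Once both hypotheses are in hand, Lemma \ref{lemma2} directly delivers, from each of the four respective properties of $R$, the corresponding property for $R_\mc S$ with respect to $t_\mc S$, completing the proof. Note that the four cases are established simultaneously, which is why no case-by-case analysis is needed.
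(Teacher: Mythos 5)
Your proposal is correct and follows essentially the same route as the paper: the authors also reduce the statement to Lemma \ref{lemma2} by noting that generalized rings of fractions are always $t$-compatible and that, when $\mc S$ is $v$-finite, every $t_{\mc S}$-ideal of $R_{\mc S}$ is of the form $(IR_{\mc S})^{t_{\mc S}}$ for some ideal $I$ of $R$, citing \cite[Proposition 1.8]{G} for the latter. The only difference is that you sketch the contraction argument behind that cited fact rather than quoting it, which does not change the structure of the proof.
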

\begin{proof} If $\mc S$ is $v$-finite, each $t$-ideal of $R_\mc S$ is of type $(IR_\mc S)^{t_\mc S}$ for some ideal $I$ of $R$ \cite[Proposition 1.8]{G}. Since  generalized rings of fractions are $t$-compatible, we conclude by Lemma \ref{lemma2}.
\end{proof}

\begin{remark} \rm If $D$ is a $t$-flat overring of $R$ (i.e., if $D_M = R_{M \cap R}$ for each $t$-maximal ideal $M$ of $D$) then $D$ is a generalized ring of fractions with respect to a $v$-finite multiplicative system of ideals of $R$ \cite[Theorem 2.6]{E}. Hence Proposition \ref{prop:generalized} implies a $t$-version of Corollary \ref{overrings}(a).\end{remark}

\begin{prop} \label{prop:loc}
Let $\mc F \subseteq \spec (R)$ be a defining family for $R$ and let $\ast := \wedge_{_{P \in \mc F}} \ast_P$, where $\ast_P$ is a star operation on $R_P$  for each $P\in \mc F$. The following conditions are equivalent for an ideal $I$ of $R$:

\begin{enumerate}
\item[(i)] $I$ is Clifford $\ast$-regular (respectively, $\ast$-stable);

\item[(ii)] $IR_P$ is Clifford $\ast_P$-regular (respectively, $\ast_P$-stable) and $(I (I^\ast:I^2)R_P)^{\ast_P} = (IR_P ((IR_P)^{\ast_P} : (I R_P)^2))^{\ast_P}$, for each $P \in \mc F$.
\end{enumerate}
\end{prop}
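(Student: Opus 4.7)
The strategy is to use Lemma \ref{lemma1} to turn each condition in (i) into a single algebraic identity and then exploit the intersection formula $J^\ast = \bigcap_{P\in\mc F}(JR_P)^{\ast_P}$ built into $\ast = \wedge_{P\in\mc F}\ast_P$. Concretely, Clifford $\ast$-regularity of $I$ is the identity $I^\ast = (I^2(I^\ast:I^2))^\ast$; and since $(E(I^\ast):I^\ast) = (I^\ast:I^2)$ and $(JK)^\ast = (J^\ast K)^\ast$, $\ast$-stability of $I$ amounts to $(I(I^\ast:I^2))^\ast = E(I^\ast)$. Because $(I^\ast:I^2) = (E(I^\ast):I)$ is an $E(I^\ast)$-module, $I(I^\ast:I^2)$ is an $E(I^\ast)$-submodule of $E(I^\ast)$ whose $\ast$-closure is again an $E(I^\ast)$-module; hence stability is equivalent to the weaker statement $1 \in (I(I^\ast:I^2))^\ast$. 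The identity $(J^\ast R_P)^{\ast_P} = (JR_P)^{\ast_P}$, a direct consequence of $J\subseteq J^\ast\subseteq(JR_P)^{\ast_P}$, will be the workhorse connecting these global identities to their localized counterparts.

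For the regularity equivalence, applying $(\cdot\, R_P)^{\ast_P}$ to both sides of $I^\ast = (I^2(I^\ast:I^2))^\ast$ and using the workhorse yields $(IR_P)^{\ast_P} = ((IR_P)^2(I^\ast:I^2)R_P)^{\ast_P}$ for every $P\in\mc F$; conversely, intersecting these pointwise identities over $P$ recovers the global one. I would then invoke Lemma \ref{lemmaX}(1) twice, once with $X=(I^\ast:I^2)R_P$ and once with $X=((IR_P)^{\ast_P}:(IR_P)^2)$, to show that both $(IR_P(I^\ast:I^2)R_P)^{\ast_P}$ and $(IR_P((IR_P)^{\ast_P}:(IR_P)^2))^{\ast_P}$ equal $T((IR_P)^{\ast_P})^{\ast_P}$, thereby identifying the pointwise identity with the combined statement in (ii) that $IR_P$ is Clifford $\ast_P$-regular and that the displayed compatibility holds. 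For the converse direction, the compatibility rewrites $((IR_P)^2(I^\ast:I^2)R_P)^{\ast_P}$ as $((IR_P)^2((IR_P)^{\ast_P}:(IR_P)^2))^{\ast_P}$ via $(JK)^{\ast_P}=(J^{\ast_P}K)^{\ast_P}$, which by $\ast_P$-regularity of $IR_P$ equals $(IR_P)^{\ast_P}$; intersecting over $P$ gives (i).

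For $\ast$-stability the workhorse gives $1 \in (I(I^\ast:I^2)R_P)^{\ast_P}$ for every $P$. The decisive observation, which I expect to be the main technical step, is that $(I(I^\ast:I^2)R_P)^{\ast_P}$ is an $E((IR_P)^{\ast_P})$-submodule of $E((IR_P)^{\ast_P})$: using $(I(I^\ast:I^2)R_P)^{\ast_P} = ((IR_P)^{\ast_P}(I^\ast:I^2)R_P)^{\ast_P}$, for $e\in E((IR_P)^{\ast_P})$ one has $e(IR_P)^{\ast_P} \subseteq (IR_P)^{\ast_P}$, which yields the required absorption. Since the set contains $1$ and sits inside $E((IR_P)^{\ast_P})$, it must coincide with $E((IR_P)^{\ast_P})$, simultaneously producing $\ast_P$-stability of $IR_P$ and the compatibility identity in (ii). Conversely, (ii) forces both sides of the compatibility to equal $E((IR_P)^{\ast_P})$ via $\ast_P$-stability of $IR_P$; thus $1 \in (I(I^\ast:I^2)R_P)^{\ast_P}$ for every $P$, intersecting yields $1 \in (I(I^\ast:I^2))^\ast$, and as this set is an $E(I^\ast)$-submodule of $E(I^\ast)$ it must equal $E(I^\ast)$, giving $\ast$-stability of $I$.
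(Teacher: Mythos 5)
Your proposal is correct and follows essentially the same route as the paper's proof: localize the defining identities of Lemma \ref{lemma1} through the compatibility $(J^\ast R_P)^{\ast_P}=(JR_P)^{\ast_P}$, use Lemma \ref{lemmaX}(1) to identify both sides of the displayed condition in (ii) with $T((IR_P)^{\ast_P})^{\ast_P}$, and intersect over $\mc F$ to return to the global statement. The only (harmless) deviation is in the stability half, where you replace the paper's colon computation $(I^\ast(I^\ast:I^2))^\ast=\bigcap_{P}((IR_P)^{\ast_P}:I)=(I^\ast:I^\ast)$ by the observation that a $\ast_P$-closed $E((IR_P)^{\ast_P})$-submodule of $E((IR_P)^{\ast_P})$ containing $1$ must equal $E((IR_P)^{\ast_P})$; both versions are sound.
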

\begin{proof}
(i) $\Rightarrow$ (ii) Since the extension $R \sub R_P$ is $(\ast, \ast_P)$-compatible,  for each $P \in \mc F$, $IR_P$ is Clifford $\ast_P$-regular by Lemma \ref{lemma2}.
Then, since $I^\ast = (I^2(I^\ast : I^2))^\ast$, we have $(IR_P)^{\ast_P} = (I^\ast R_P)^{\ast_P} = ((I^2(I^\ast : I^2))^\ast R_P)^{\ast_P} = ( (IR_P)^2 (I^\ast : I^2)R_P)^{\ast_P}$. We then apply Lemma \ref{lemmaX}(1) to the Clifford $\ast_P$-regular ideal $I R_P$, with $X = (I^\ast : I^2)R_P$  and obtain $( IR_P (I^\ast : I^2)R_P)^{\ast_P} = (IR_P ( (IR_P)^{\ast_P} : (IR_P)^2))^{\ast_P}$.

Assume now that $I$ is $\ast$-stable and let $P \in \mc F$. By Proposition \ref{prop1}(1) $R$ is Clifford $\ast$-regular, so the second condition in (ii) is satisfied. Thus we need only show that $IR_P$ is $\ast_P$-stable. Since $I$ is $\ast$-stable, we have that $(I^\ast (I^\ast : I^2))^\ast = (I^\ast : I^\ast)$. It follows that $((I^\ast (I^\ast : I^2))^\ast R_P)^{\ast_P} = ((I^\ast : I^\ast)R_P)^{\ast_P}$. Thus $1 \in  ((I^\ast (I^\ast : I^2))^\ast R_P)^{\ast_P}$.  Hence, by the second condition of (ii), $ 1 \in (IR_P ( (IR_P)^{\ast_P} : (I R_P)^2))^{\ast_P}$ and so $(IR_P)^{\ast_P}$ is $\dot{\ast_P}$-invertible in $((IR_P)^{\ast_P} : (IR_P)^{\ast_P})$. Thus $IR_P$ is $\ast_P$-stable.

(ii) $\Rightarrow$ (i)  Since $R_P$ is Clifford $\ast_P$-regular, $(IR_P)^{\ast_P} = ((IR_P)^2  ((I R_P)^{\ast_P} : (IR_P)^2))^{\ast_P} = (I^2 (I^\ast : I^2) R_P)^{\ast_P}$, for each $P \in \mc F$. So, $I^\ast = \bigcap_P (IR_P)^{\ast_P} = \bigcap_P (I^2 (I^\ast:I^2)R_P)^{\ast_P} =  (I^2 (I^\ast:I^2))^{\ast}$, and $I$ is Clifford $\ast$-regular.

Assume now that $IR_P$ is $\ast_P$-stable for all $P \in \mc F$, i.e. that $(IR_P ( (IR_P)^{\ast_P} : (I R_P)^2))^{\ast_P} = ( (IR_P)^{\ast_P} : (IR_P)^{\ast_P} )$. By the hypothesis $(I (I^\ast:I^2)R_P)^{\ast_P} = ( (IR_P)^{\ast_P} : (IR_P)^{\ast_P})$. So $(I^ \ast (I^\ast : I^2 ))^\ast = \bigcap_{P \in \mathcal F} ( (IR_P)^{\ast_P} : (IR_P)^{\ast_P}) =  \bigcap_{P \in \mathcal F} ( (IR_P)^{\ast_P} : I) = ( \bigcap_{P \in \mathcal F} (IR_P)^{\ast_P} : I ) = (I^ \ast : I) = (I^\ast : I^\ast)$ and $I$ is $\ast$-stable.
\end{proof}

\begin{cor} \label{loc-t}
Let $R$ be an integral domain with the $t$-finite character. Then the following conditions are equivalent for an ideal $I$ of $R$:
\begin{enumerate}
\item[(i)] $I$ is Clifford $t$-regular (respectively, $t$-stable);

\item[(ii)] $IR_M$ is Clifford $t_M$-regular (respectively, $t_M$-stable) and $(I(I^t:I^2)R_M)^{t_M} = (IR_M((I R_M)^{t_M}:(IR_M)^2))^{t_M}$, for each ideal $I$ and $M\in \tmax(R)$.
\end{enumerate}
\end{cor}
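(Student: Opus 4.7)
My plan is to obtain Corollary~\ref{loc-t} as a direct specialization of Proposition~\ref{prop:loc}, applied with the defining family $\mc F := \tmax(R)$ and the star operations $\ast_M := t_M$ (the $t$-operation on $R_M$) for each $M \in \mc F$. The one substantive preliminary is to identify the induced (semi)star operation $\wedge_{M \in \tmax(R)} t_M$ on $R$ with the $t$-operation.

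For this identification, I would first note that $\tmax(R)$ is always a defining family of $R$: its elements are pairwise incomparable, and $R = \bigcap_{M \in \tmax(R)} R_M$ holds because, for any $x \in K \sm R$, the conductor $(R:x)$ is a proper divisorial ideal and hence sits inside some $t$-maximal $M$, forcing $x \notin R_M$. Next, the key identity $(IR_M)^{t_M} = I^t R_M$ for every ideal $I$ of $R$ follows from $t$-compatibility of the flat extension $R \sub R_M$ (Corollary~\ref{cor:t-comp}), which yields $(IR_M)^{t_M} = (I^t R_M)^{t_M}$, together with the fact that $I^t R_M$ is itself a $t_M$-ideal: if $F$ is a finitely generated subideal of $I^t R_M$, a clearing-of-denominators argument writes $F = s^{-1} F_0 R_M$ with $F_0 \sub I^t$ finitely generated and $s$ a unit of $R_M$, so $F^{v_M} = s^{-1} F_0^v R_M \sub I^t R_M$ (using the standard flat-base-change identity $(F_0 R_M)^{v_M} = F_0^v R_M$ for finitely generated $F_0$). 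Finally, since every $t$-ideal is a $w$-ideal, $\bigcap_{M \in \tmax(R)} I^t R_M = (I^t)^w = I^t$.

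Combining these yields $I^t = \bigcap_{M \in \tmax(R)} (IR_M)^{t_M}$, i.e.\ $t = \wedge_{M \in \tmax(R)} t_M$, so the corollary is the specialization of Proposition~\ref{prop:loc} to this situation. The main (mild) obstacle is the verification that $I^t R_M$ is a $t_M$-ideal, which is a standard flat-base-change fact but warrants care. I would also observe that the $t$-finite character hypothesis is not logically required for the equivalence itself; it is presumably retained because the criterion is intended for use in the finite character setting explored later in Section~5.
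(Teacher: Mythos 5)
Your overall strategy --- specialize Proposition \ref{prop:loc} to $\mc F:=\tmax(R)$ with $\ast_M:=t_M$, after identifying $\wedge_{M\in\tmax(R)}t_M$ with $t$ --- is exactly the paper's, which handles the identification by citing \cite[Proposition 1.8]{FPT}, a result valid precisely under the $t$-finite character hypothesis. The trouble is your attempt to prove the identification unconditionally. The step where you show that $I^tR_M$ is a $t_M$-ideal rests on the ``standard flat-base-change identity'' $(F_0R_M)^{v_M}=F_0^vR_M$ for finitely generated $F_0$, and this identity is false in general. What localization of colons gives for finitely generated $F_0$ is $(R:F_0)R_M=(R_M:F_0R_M)$, from which one only gets $F_0^vR_M=(R:(R:F_0))R_M\sub (R_M:(R:F_0)R_M)=(F_0R_M)^{v_M}$; the reverse inclusion would require $(R:F_0)$ itself to be finitely generated (or some comparable hypothesis). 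Note that the inclusion you do get for free points the wrong way for the containment $F^{v_M}\sub I^tR_M$ that you need.

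This is not a cosmetic slip: the $v$- and $t$-operations genuinely fail to commute with localization (there are domains with $t$-primes $P$ for which $PR_P$ is not a $t_P$-ideal even though $P=P^t$, so $P^tR_P$ need not be a $t_P$-ideal), and correspondingly $t=\wedge_{M\in\tmax(R)}t_M$ is not available for arbitrary domains. Your closing remark that the $t$-finite character hypothesis ``is not logically required'' is therefore unwarranted: that hypothesis is exactly what makes the key identity hold, via \cite[Proposition 1.8]{FPT}, and the corollary should be obtained by invoking (or reproving) that result rather than by the argument you give. The remainder of your reduction --- that $\tmax(R)$ is a defining family, that $(IR_M)^{t_M}=(I^tR_M)^{t_M}$ by $t$-compatibility of the flat extension $R\sub R_M$ (Corollary \ref{cor:t-comp}), and that $\bigcap_M I^tR_M=(I^t)^w=I^t$ --- is fine and matches what the cited proposition encapsulates.
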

\begin{proof}
It is straightforward consequence of Proposition \ref{prop:loc} and of the fact that in a domain with the $t$-finite character $t= \wedge_{M \in \tmax(R)} t_M$ \cite[Proposition 1.8]{FPT}.
\end{proof}

\begin{cor} With the notation of Proposition \ref{prop:loc}, the following statements are equivalent:
\begin{enumerate}
\item[(i)] $R$ is Clifford $\ast$-regular (respectively, $\ast$-stable);

\item[(ii)] $R_P$ is Clifford $\ast_P$-regular (respectively, $\ast_P$-stable) and $(I (I^\ast:I^2)R_P)^{\ast_P} = (IR_P ((IR_P)^{\ast_P} : (I R_P)^2))^{\ast_P}$, for each ideal $I$ and each $P \in \mc F$.
\end{enumerate}
\end{cor}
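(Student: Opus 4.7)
The plan is to deduce this corollary directly from Proposition \ref{prop:loc} by ranging over all ideals of $R$. By definition, $R$ is Clifford $\ast$-regular (respectively, $\ast$-stable) precisely when every ideal of $R$ is, and similarly $R_P$ is Clifford $\ast_P$-regular (respectively, $\ast_P$-stable) precisely when every fractional ideal of $R_P$ has the corresponding property. So the corollary is a universally quantified version of Proposition \ref{prop:loc}, provided one can translate ``$IR_P$ is Clifford $\ast_P$-regular for every ideal $I$ of $R$'' into ``$R_P$ is Clifford $\ast_P$-regular''.

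The preliminary fact needed for this translation is that every fractional ideal of $R_P$ is extended from a fractional ideal of $R$. Given $H \in \ol{\mc F}(R_P)$, choose $a \in R \sm \{0\}$ with $aH \sub R_P$; then $aH \cap R$ is an integral ideal of $R$ and $H = a^{-1}(aH \cap R)R_P$. Hence the Clifford $\ast_P$-regularity (respectively, $\ast_P$-stability) of $R_P$ is equivalent to the Clifford $\ast_P$-regularity (respectively, $\ast_P$-stability) of $IR_P$ for every ideal $I$ of $R$.

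With this in hand, (i) $\Rightarrow$ (ii) is immediate: if $R$ is Clifford $\ast$-regular (respectively, $\ast$-stable), then every ideal $I$ of $R$ has the corresponding property, and Proposition \ref{prop:loc} applied to each $I$ yields both the local conditions on each $IR_P$ and the displayed equation, for every $P \in \mc F$. Conversely, assuming (ii), fix an arbitrary ideal $I$ of $R$: the first hypothesis in (ii) guarantees $IR_P$ is Clifford $\ast_P$-regular (respectively, $\ast_P$-stable) for each $P \in \mc F$, and combined with the displayed equation, Proposition \ref{prop:loc} then forces $I$ itself to be Clifford $\ast$-regular (respectively, $\ast$-stable). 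Since $I$ was arbitrary, $R$ has the desired property.

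There is no serious obstacle here; the only routine point is the extension fact about fractional ideals of $R_P$, and everything else is a matter of quantifying Proposition \ref{prop:loc} over the whole of $\mc F(R)$.
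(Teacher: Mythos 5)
Your proposal is correct and follows exactly the paper's route: the paper's proof is the one-line observation that the corollary follows from Proposition \ref{prop:loc} because each ideal of $R_P$ is extended from an ideal of $R$, which is precisely the quantification argument (and the extension fact) you spell out. Nothing is missing; you have merely made explicit the details the paper leaves implicit.
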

\begin{proof} It follows directly from Proposition \ref{prop:loc}, since each ideal of $R_P$ is extended from $R$.
\end{proof}

The following corollary for $\ast=d$  is proved in \cite[Proposition 2.8]{B3}.

\begin{cor} \label{locw2}
Let  $\ast =\tilde{\ast}$ be a star operation on $R$ spectral and of finite type. Then the following conditions are equivalent:

\begin{enumerate}
\item[(i)]  $R$ is Clifford $\ast$-regular (respectively, $\ast$-stable);

\item[(ii)] $R_M$ is Clifford regular (respectively, stable) and $I(I^\ast:I^2)R_M = IR_M(IR_M:I^2)$, for each  ideal $I$ and $M\in \astmax(R)$.
\end{enumerate}
\end{cor}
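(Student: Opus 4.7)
The plan is to deduce the corollary directly from Proposition \ref{prop:loc} by specializing the defining family and the local star operations. Because $\ast=\tilde{\ast}$ is spectral and of finite type, the very definition of $\tilde{\ast}$ gives $\ast=\wedge_{M\in\astmax(R)}d_M$, where $d_M$ is the identity on $R_M$. So we are exactly in the setting of Proposition \ref{prop:loc}, applied with $\mc F=\astmax(R)$ and $\ast_M=d_M$ for each $M\in\mc F$.

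First I would observe that, since $d_M$ is the identity, ``Clifford $d_M$-regular'' and ``$d_M$-stable'' mean simply that $R_M$ is Clifford regular, respectively stable, in the classical sense. That takes care of matching the first half of condition (ii) in Proposition \ref{prop:loc} with the first half of condition (ii) of the corollary.

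Next I would unwind the technical equality in Proposition \ref{prop:loc}(ii). With $\ast_M=d_M$, the equality
$(I(I^\ast:I^2)R_M)^{d_M}=(IR_M((IR_M)^{d_M}:(IR_M)^2))^{d_M}$
collapses, because the $d_M$-closures act trivially, to
$I(I^\ast:I^2)R_M=IR_M(IR_M:(IR_M)^2)$. Then $(IR_M)^2=I^2R_M$, and since $IR_M$ is an $R_M$-submodule of $K$, the identity $(IR_M:I^2R_M)=(IR_M:I^2)$ holds; this rewrites the right-hand side as $IR_M(IR_M:I^2)$, which is exactly the form appearing in the corollary.

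There is no real obstacle here; the whole argument is a direct specialization of Proposition \ref{prop:loc} once one recognizes that the assumption $\ast=\tilde{\ast}$ puts us in the spectral finite type setting with local pieces equal to the identity. The only minor point to verify carefully is the trivial closure/module identity used to simplify the right-hand side of the compatibility condition.
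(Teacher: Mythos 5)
Your argument is correct and is essentially the paper's own route: Corollary \ref{locw2} is stated without proof precisely because it is the specialization of Proposition \ref{prop:loc} (via the unnamed corollary just before it) to the defining family $\mc F=\astmax(R)$ with $\ast_M=d_M$, using that $\tilde{\ast}=\wedge_{M\in\astmax(R)}d_M$ by definition, after which the $d_M$-closures collapse and $(IR_M:(IR_M)^2)=(IR_M:I^2)$ since $IR_M$ is an $R_M$-module. The one point worth making explicit, which the paper records in that intermediate corollary, is that every ideal of $R_M$ is extended from $R$, so that ``$IR_M$ is Clifford regular for every ideal $I$ of $R$'' is equivalent to ``$R_M$ is Clifford regular.''
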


\begin{remark} \rm Let $\ast=\tilde{\ast}$. If the ideal $I$ is $\dot{\ast}$-finite in $E(I^\ast)$ then $(I^\ast:I^2)R_M = (IR_M:I^2)$ \cite[Lemma 1.8]{GP}. Hence in this case $I$ is Clifford $\ast$-regular (respectively, $\ast$-stable)  if and only if  $IR_M$ is Clifford regular (respectively, stable) for each $M\in \ast$-$\Max(R)$.
However if $R$ is Clifford $\ast$-regular and not $\ast$-stable not every ideal $I$ is   $\dot{\ast}$-finite in $E(I^\ast)$. For example if $V$ is a one-dimensional valuation domain with maximal ideal $M$, $V$ is Clifford regular and it is stable if and only if $V$ is discrete, equivalently $M$ is finitely generated in $V=(M:M)$.
\end{remark}

A family $\mathcal{F}$ of prime ideals of $R$ such that no two primes in $\mathcal{F}$ contain a common nonzero prime ideal  is called \emph{independent}.
If $R$ has  a  defining independent family $\mathcal{F}$ and the intersection $R=\bigcap_{P \in \mathcal F} R_P$ has finite character,  $R$ is called an \emph{$\mathcal{F}$-IFC domain} \cite{AZ}.
When $\mathcal{F} = \Max(R)$ (respectively, $\mathcal{F} = \tmax(R)$), an $\mathcal{F}$-IFC domain is called \emph{$h$-local} (respectively, \emph{weakly Matlis}).

D.D. Anderson and M. Zafrullah have shown that if $R$ is an $\mathcal{F}$-IFC domain and $Y$ is a $\ast_\mathcal{F}$-submodule of $K$, then for each $P \in \mathcal{F}$ and $X$ submodule of $Y$, $(Y:X)R_P = (YR_P : X R_P)$ \cite[Corollary 5.2]{AZ}.

\begin{prop} \label{IFC}
Let $R$ be an $\mathcal{F}$-IFC domain and $\ast:=\ast_\mc F$. The following conditions are equivalent:

\begin{enumerate}
\item[(i)] $R$ is Clifford $\ast$-regular (respectively, $\ast$-stable);

\item[(ii)] $R_P$ is Clifford regular (respectively, stable) for all $P$ in $\mathcal{F}$.
\end{enumerate}
\end{prop}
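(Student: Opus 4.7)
The plan is to derive this as an immediate consequence of the corollary to Proposition \ref{prop:loc} (the unlabelled one just above), applied to the defining family $\mc F$ with the choice $\ast_P = d_P$ for each $P \in \mc F$. Under this choice $\ast = \ast_\mc F = \wedge_{P\in\mc F} d_P$, and Clifford $d_P$-regularity (respectively, $d_P$-stability) of $R_P$ is just ordinary Clifford regularity (respectively, stability). The equivalence (i) $\Leftrightarrow$ (ii) will therefore follow once one shows that the auxiliary identity
\[
I(I^\ast:I^2)R_P = IR_P\bigl(IR_P:(IR_P)^2\bigr)
\]
appearing in that corollary holds automatically for every ideal $I$ of $R$ and every $P \in \mc F$.

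For this verification I plan to invoke the Anderson--Zafrullah formula recalled just before the proposition: if $Y$ is a $\ast_\mc F$-submodule of $K$ and $X$ is any submodule of $Y$, then $(Y:X)R_P = (YR_P : XR_P)$. Applying this with $Y = I^\ast$ and $X = I^2 \sub I \sub I^\ast$ yields $(I^\ast : I^2)R_P = (I^\ast R_P : I^2 R_P)$. A short sandwich argument then identifies $I^\ast R_P$ with $IR_P$: the inclusion $I \sub I^\ast$ gives $IR_P \sub I^\ast R_P$, while $I^\ast = \bigcap_{Q\in\mc F} IR_Q \sub IR_P$ (using $P \in \mc F$) gives the reverse. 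Multiplying the resulting equality $(I^\ast : I^2)R_P = (IR_P : (IR_P)^2)$ on the left by $I$ produces the desired identity.

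The only point requiring care is to ensure that the Anderson--Zafrullah hypothesis is satisfied, and this is immediate with the choice $Y = I^\ast$, since $I^\ast$ is a $\ast_\mc F$-submodule by definition. Everything else is formal: the preceding corollary treats Clifford $\ast$-regularity and $\ast$-stability uniformly, so the equivalence (i) $\Leftrightarrow$ (ii) drops out for both properties in a single stroke with no further work.
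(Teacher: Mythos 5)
Your proposal is correct and follows essentially the same route as the paper: reduce to Proposition \ref{prop:loc} (via its corollary) and then verify the auxiliary identity $I(I^\ast:I^2)R_P = IR_P(IR_P:(IR_P)^2)$ using the Anderson--Zafrullah formula $(Y:X)R_P=(YR_P:XR_P)$ with $Y=I^\ast$ and $X=I^2$, together with the observation that $I^\ast R_P=IR_P$ coming from the definition of $\ast_\mc F$. The paper's proof is just a more compressed version of the same argument.
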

\begin{proof}
Let $I$ be an ideal of $R$. By Proposition \ref{prop:loc}, we need only show that $IR_P(I^{\ast} :I^2)R_P = IR_P (IR_P : (IR_P)^2)$.  Since $I^{\ast}$ is a $\ast$-ideal and $I^2 \sub I^{\ast}$, it follows by \cite[Corollary 5.2]{AZ} and the definition of $\ast:=\ast_\mc F$ that $(I^{\ast} : I^2)R_P =   (I^{\ast} R_P : I^2 R_P ) = (IR_P : (IR_P)^2)$.
\end{proof}

\begin{cor}
Let $R$ be an $h$-local domain. Then $R$ is Clifford regular (respectively, stable) if and only if it is locally Clifford regular (respectively, stable).
\end{cor}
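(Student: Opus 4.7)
The plan is to recognize this as a direct specialization of Proposition \ref{IFC}. By definition, an $h$-local domain is precisely an $\mathcal{F}$-IFC domain with $\mathcal{F} = \Max(R)$, so the hypotheses of Proposition \ref{IFC} apply with this choice of defining family.

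The key observation to invoke is that, as noted in Section 1 right after the definition of $\ast_\mathcal{F}$, when $\mathcal{F} = \Max(R)$ one has $\ast_\mathcal{F} = d$, the identity (semi)star operation. Under this identification, Clifford $d$-regularity is just Clifford regularity in the classical sense, $d$-stability is classical stability, and for each $P \in \Max(R)$ the star operation $\ast_P$ induced on $R_P$ is again the identity, so Clifford $\ast_P$-regularity (respectively, $\ast_P$-stability) of $R_P$ is Clifford regularity (respectively, stability) of $R_P$.

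Putting these two remarks together, Proposition \ref{IFC} specializes to the statement that $R$ is Clifford regular (respectively, stable) if and only if $R_P$ is Clifford regular (respectively, stable) for every $P \in \Max(R)$, which is exactly the claim that $R$ is locally Clifford regular (respectively, locally stable). There is no real obstacle: the corollary is a straightforward unpacking of Proposition \ref{IFC} in the case $\mathcal{F} = \Max(R)$, using only the identification $\ast_{\Max(R)} = d$.
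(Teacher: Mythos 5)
Your proof is correct and coincides with the paper's own argument: the authors also prove this corollary by applying Proposition \ref{IFC} with $\mathcal{F} = \Max(R)$, using the identification $\ast_{\Max(R)} = d$. The extra details you spell out (that $h$-local means $\Max(R)$-IFC and that $d$-regularity/stability is classical regularity/stability) are exactly the implicit content of the paper's one-line proof.
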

\begin{proof}
It follows from Proposition \ref{IFC} for  $\mathcal{F} = \Max(R)$.
\end{proof}

\begin{cor} \label{localwM}
Let $R$ be a weakly Matlis domain. Then:

\begin{enumerate}
\item[(1)] $R$ is Clifford $t$-regular (respectively, $t$-stable) if and only if $R_M$ is Clifford $t_M$-regular (respectively, $t_M$-stable) for each $M\in \tmax(R)$.
\item[(2)] $R$ is Clifford $w$-regular (respectively, $w$-stable) if and only if $R_M$ is Clifford regular (respectively, stable) for each $M\in \tmax(R)$.
\end{enumerate}
\end{cor}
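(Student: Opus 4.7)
The plan is to deduce part~(2) as a direct specialization of Proposition~\ref{IFC}, and to handle part~(1) by a parallel argument adapted to the $t$-operation.

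For part~(2), a weakly Matlis domain is by definition an $\mathcal{F}$-IFC domain with $\mathcal{F}=\tmax(R)$, and the induced (semi)star operation $\ast_{\mathcal{F}}=\wedge_{M\in\tmax(R)}d_M$ coincides with $w$. So Proposition~\ref{IFC} gives the required equivalence: $R$ is Clifford $w$-regular (resp.\ $w$-stable) iff $R_M$ is Clifford regular (resp.\ stable) for every $M\in\tmax(R)$.

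For part~(1), the forward implication falls out of Proposition~\ref{overrings1}(a): since $R\subseteq R_M$ is flat, hence $(t,t_M)$-compatible, Clifford $t$-regularity (resp.\ $t$-stability) of $R$ transfers to Clifford $t_M$-regularity (resp.\ $t_M$-stability) of each $R_M$. For the converse I would invoke Corollary~\ref{loc-t}, which is applicable since weakly Matlis implies $t$-finite character. The task reduces to verifying, for each ideal $I$ and each $M\in\tmax(R)$, the compatibility identity
\[
\bigl(I\,(I^t:I^2)\,R_M\bigr)^{t_M}=\bigl(IR_M\,((IR_M)^{t_M}:(IR_M)^2)\bigr)^{t_M}.
\]
The strategy is to mirror the proof of Proposition~\ref{IFC}. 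First observe that $I^t$, being a $t$-ideal, is automatically a $w$-ideal: given $y\in\bigcap_{M}IR_M$, the ideal $(I^t:_R y)$ meets $R\setminus M$ for every $M\in\tmax(R)$, so $(I^t:_R y)^t=R$, and then a finitely generated $J_0\subseteq (I^t:_R y)$ with $J_0^v=R$ gives $(yJ_0)^v=yR\subseteq I^t$, forcing $y\in I^t$. Hence $I^t$ is a $\ast_{\mathcal{F}}$-submodule of $K$, and Corollary~5.2 of \cite{AZ} applies with $Y=I^t$ and $X=I^2$ to yield $(I^t:I^2)R_M=(I^t R_M:I^2 R_M)$. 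Combined with the identity $I^t R_M=(IR_M)^{t_M}$ — playing in part~(1) the role that $I^w R_M=IR_M$ plays in the proof of Proposition~\ref{IFC} — this delivers the compatibility identity, and therefore the Clifford $t$-regularity of $R$. The $t$-stable case is treated identically, using the $t$-stable clause of Corollary~\ref{loc-t}.

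The main obstacle lies in establishing the identity $I^t R_M=(IR_M)^{t_M}$ in the weakly Matlis setting. Flat $t$-compatibility of $R\subseteq R_M$ only furnishes the inclusion $I^t R_M\subseteq (IR_M)^{t_M}$, and the reverse inclusion must exploit both the $t$-finite character and the independence of the family $\tmax(R)$ — precisely the two ingredients packaged into the weakly Matlis hypothesis — via a lifting argument for finitely generated $F\subseteq I$ that controls $(FR_M)^{v_M}$ in terms of $F^v R_M$.
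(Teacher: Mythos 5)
Your proposal follows the paper's route almost exactly. Part (2) is, as you say, just the case $\mathcal{F}=\tmax(R)$ of Proposition \ref{IFC} (weakly Matlis means $\mathcal{F}$-IFC for $\mathcal{F}=\tmax(R)$, and $\ast_{\mathcal{F}}=w$), and part (1) is reduced via Corollary \ref{loc-t} (applicable since weakly Matlis gives the $t$-finite character) to the two localization identities $(I^t:I^2)R_M=(I^tR_M:I^2R_M)$ and $I^tR_M=(IR_M)^{t_M}$. The first you correctly obtain from \cite[Corollary 5.2]{AZ}; note only that your verification that $I^t$ is a $w$-ideal is more elaborate than necessary, since $w\leq t$ gives $(I^t)^w\subseteq (I^t)^t=I^t$ immediately.

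The one genuine gap is the second identity, $I^tR_M=(IR_M)^{t_M}$, which you yourself flag as ``the main obstacle'' and do not establish: the sketch of ``a lifting argument for finitely generated $F\subseteq I$'' is not carried out, and, as you observe, $t$-compatibility of the flat extension $R\subseteq R_M$ only yields the inclusion $I^tR_M\subseteq (IR_M)^{t_M}$. This is exactly the point at which the paper invokes \cite[Corollary 5.3]{AZ}, which asserts that in an $\mathcal{F}$-IFC domain the $v$- and $t$-operations commute with localization at the primes of $\mathcal{F}$; the paper's proof of part (1) consists precisely of Corollary \ref{loc-t} together with Corollaries 5.2 and 5.3 of \cite{AZ}. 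So your strategy is the right one and the missing step is a known result rather than a flaw, but as written the proof of part (1) is incomplete at exactly this point and needs either that citation or a proof of the identity.
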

\begin{proof}
(1) follows from Corollary \ref{loc-t} and \cite[Corollary 5.2 and Corollary 5.3]{AZ}.
(2) follows from Proposition \ref{IFC}, when $\mathcal{F} = \tmax(R)$.
\end{proof}

\begin{remark} \rm An important  class of weakly Matlis domains is given by the so called $w$-divisorial domains, studied in \cite{GE}.
We recall that a \emph{divisorial domain} is a domain in which each ideal is divisorial and that  a \emph{$w$-divisorial domain} is a domain in which each $w$-ideal is divisorial. It is known that $R$ is a divisorial domain (respectively, a $w$-divisorial domain) if and only if $R$ is $h$-local (respectively, weakly Matlis) and locally (respectively, $t$-locally) divisorial \cite[Theorem 1.5]{GE}.

Hence, when $R$ is ($w$-)divisorial, by Corollary \ref{localwM} $R$ is Clifford ($w$-)regular (equivalently $t$-regular) if and only if $R_M$ is Clifford regular for all $M\in (t$-$)\Max(R)$.
\end{remark}

It is known that, for any star operation $\ast$ of finite type, a Clifford $\ast$-regular domain has the \emph{$\ast$-local $\ast$-invertibility property}, that is an ideal $I$ is $\ast$-invertible if and only if  $I^\ast R_M$  is principal for each $M\in \astmax(R)$ \cite[Lemma 4.4]{HK}. The next consequence of Proposition \ref{prop:loc} extends the property that a Clifford regular domain has the \emph{local stability property}, which means that an ideal $I$ is stable if and only if $IR_M$ is stable for each $M\in \Max(R)$ \cite[Lemma 5.7]{B3}.

\begin{cor} \label{locstable} With the notation of Proposition \ref{prop:loc}, assume that $R$ is Clifford $\ast$-regular. Then an ideal $I$ is $\ast$-stable if and only if $IR_P$ is $\ast_P$-stable for all $P\in \mc F$. Hence  $R$ is $\ast$-stable if and only if $R_P$ is $\ast_P$-stable for all $P\in \mc F$.
\end{cor}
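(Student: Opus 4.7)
The plan is to extract the statement directly from Proposition \ref{prop:loc} by exploiting the fact that the Clifford $\ast$-regularity hypothesis automatically trivializes the auxiliary compatibility identity that appears in that proposition's condition (ii).

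First I would observe that, since $R$ is Clifford $\ast$-regular by hypothesis, \emph{every} ideal $I$ of $R$ is Clifford $\ast$-regular. Applying the Clifford-regularity version of Proposition \ref{prop:loc} (direction (i) $\Rightarrow$ (ii)) to an arbitrary ideal $I$ therefore yields
\[
(I(I^\ast:I^2)R_P)^{\ast_P} = (IR_P((IR_P)^{\ast_P}:(IR_P)^2))^{\ast_P}
\]
for every $P \in \mc F$. In other words, the technical equality that appears as the second half of condition (ii) of Proposition \ref{prop:loc} is available for free.

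Next, I would apply Proposition \ref{prop:loc} a second time, now in its $\ast$-stability version. The equivalence (i) $\Leftrightarrow$ (ii) for stability requires that $IR_P$ be $\ast_P$-stable \emph{and} the same compatibility identity. Since the identity is already guaranteed by the previous step, the proposition collapses to: $I$ is $\ast$-stable if and only if $IR_P$ is $\ast_P$-stable for every $P\in \mc F$. This proves the ideal-wise statement.

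For the ``hence'' part I would argue as follows. If $R$ is $\ast$-stable, then every ideal of $R$ is $\ast$-stable and, by the previous step, $IR_P$ is $\ast_P$-stable for each ideal $I$ of $R$ and each $P\in \mc F$. Since every ideal of $R_P$ has the form $JR_P$ for some ideal $J$ of $R$, it follows that $R_P$ is $\ast_P$-stable. Conversely, if each $R_P$ is $\ast_P$-stable, then in particular $IR_P$ is $\ast_P$-stable for every ideal $I$ of $R$, and by the ideal-wise statement every $I$ is $\ast$-stable, i.e.\ $R$ is $\ast$-stable. There is no real obstacle here: the entire corollary is essentially a bookkeeping consequence of Proposition \ref{prop:loc}, with Clifford $\ast$-regularity doing precisely the work of removing the awkward compatibility identity from the statement.
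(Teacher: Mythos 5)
Your argument is correct and is precisely the intended derivation: the paper states Corollary \ref{locstable} as an immediate consequence of Proposition \ref{prop:loc}, relying exactly on the observation that Clifford $\ast$-regularity of $R$ supplies the compatibility identity in condition (ii) for every ideal, so that the stability equivalence reduces to the local condition alone. Your handling of the ``hence'' part (every ideal of $R_P$ is extended from $R$) also matches the reasoning the paper uses for the neighbouring corollary.
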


\begin{cor} \label{locw3}
Let  $\ast =\tilde{\ast}$ be a star operation on $R$ spectral and of finite type and assume that $R$ is Clifford $\ast$-regular. Then $R$ is $\ast$-stable if and only if $R_M$ is  stable for all $M\in \astmax(R)$.
\end{cor}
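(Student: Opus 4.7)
The plan is to derive Corollary \ref{locw3} as an immediate specialization of Corollary \ref{locstable}. Recall from the preliminary material of Section 1 that $\ast = \tilde{\ast}$ is precisely the condition that $\ast$ is spectral and of finite type, and that in this situation $\ast$ coincides with $\ast_\mc F$ for $\mc F := \astmax(R)$ and $\ast_P := d_P$ the identity on $R_P$ for every $P \in \mc F$. Thus we are exactly in the framework of Proposition \ref{prop:loc} (and hence of its consequence Corollary \ref{locstable}) with this specific choice of defining family and local operations.

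Given this identification, Corollary \ref{locstable} applies verbatim with $\mc F = \astmax(R)$ and $\ast_P = d_P$. Since $R$ is assumed Clifford $\ast$-regular, the corollary yields that $R$ is $\ast$-stable if and only if $R_M$ is $d_M$-stable for every $M \in \astmax(R)$. But $d_M$-stability of $R_M$ is nothing other than ordinary stability of $R_M$ (each ideal of $R_M$ is invertible in its own endomorphism ring), so this is precisely the statement of Corollary \ref{locw3}.

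There is no real obstacle here: the result is a pure translation from the abstract spectral setup of Corollary \ref{locstable} to the canonical spectral representation of $\tilde{\ast}$, entirely analogous to the way Corollary \ref{locw2} was extracted from Proposition \ref{prop:loc}. The one point worth stating carefully in the write-up is the identification $\tilde{\ast} = \wedge_{M \in \astmax(R)} d_M$, which is what transports the Clifford $\ast$-regularity hypothesis and the conclusion between the two settings.
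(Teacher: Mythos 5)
Your proposal is correct and is exactly the argument the paper intends: Corollary \ref{locw3} is stated without proof precisely because it is the specialization of Corollary \ref{locstable} to $\mc F = \astmax(R)$ and $\ast_P = d_P$, using the identification $\tilde{\ast} = \ast_{\mc F}$ for this choice of defining family. Your write-up makes explicit the one identification the paper leaves tacit, and nothing more is needed.
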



\section{The local case}

If $\ast=\tilde{\ast}$ is spectral and of finite type,  the problem of establishing when a Clifford $\ast$-regular domain is $\ast$-stable can be reduced to the local case  (Corollary \ref{locw3}). Note that any local stable domain is strongly stable \cite[Lemma 3.1]{O2}.

The following useful result is due to Olberding.

\begin{theorem} \label{Olb} \cite[Theorem 2.3]{O1} A domain $R$ is stable if and only if  (a) $R$ is finitely stable; (b) $PR_P$ is a stable ideal of $R_P$, for each nonzero prime $P$; (c) $R_P$ is a valuation domain for each nonzero nonmaximal prime $P$; (d) $R$ has finite character.
\end{theorem}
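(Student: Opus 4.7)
The plan is to prove both directions separately. The forward direction splits into four statements of increasing depth, while the backward direction is a local-global gluing argument.

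For ($\Rightarrow$), assume $R$ is stable. Condition (a) is immediate from the definition, since every ideal (in particular each finitely generated one) is stable. For (b), the inclusion $R \subseteq R_P$ is a flat extension, so Corollary \ref{overrings}(a) with $\ast = d$ transfers stability to $R_P$; in particular its maximal ideal $PR_P$ is a stable ideal. For (c), with $P$ a nonzero nonmaximal prime, I would use that any local stable domain is strongly stable (a structural result one can prove separately by analyzing $(M:M)$ for the maximal ideal $M$), and then argue that the existence of a prime properly above $PR_P$ in the global spectrum forces the trace ideal structure of $R_P$ to be linearly ordered, hence $R_P$ to be a valuation domain. For (d), I would exploit that stability implies Clifford regularity (Proposition \ref{prop1}(1)) and run a class-semigroup argument: if some $x$ lay in infinitely many maximal ideals, one constructs an ideal $I \ni x$ whose class in $\cS(R)$ cannot be von Neumann regular, contradicting Clifford regularity.

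For ($\Leftarrow$), given (a)--(d), I would fix an ideal $I$ and aim to show $(I(I:I^2)) = (I:I)$. Choose a nonzero $x \in I$; by (d), only finitely many maximal ideals $M_1, \ldots, M_n$ contain $x$, hence $I$. At each $M_i$, combining (b) (stability of $M_i R_{M_i}$) with (a) (finite stability globally, hence locally) and the local structure theory gives that $R_{M_i}$ is stable, so $IR_{M_i}$ is stable. For nonmaximal primes $P$, (c) makes $R_P$ a valuation domain, where ideal stability reduces to the strong-discreteness condition, which is forced by (b) applied to the maximal ideal of $R_P$. The global stability of $I$ is then assembled via a finite-gluing argument: finite character reduces the problem to a finite intersection $R = \bigcap_{i=1}^n R_{M_i} \cap \bigcap_{\text{other primes}} R_P$, and one patches the local inverses of $I$ into a single element of $E(I)$ by a Chinese-remainder-type construction across the $M_i$.

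The main obstacle is the forward implication (d): proving stability forces finite character. This is genuinely deep and is not an elementary transfer property, requiring a careful analysis of trace ideals $T(I) = I(I:I^2)$ and of the class semigroup, in the same spirit as the paper's later Theorem \ref{FC}. A secondary but more routine difficulty is, in the backward direction, verifying that the locally-defined inverses glue into a genuine element of $E(I)$; finite character reduces this to a finite compatibility problem, but the patching still must be arranged carefully, for instance by using (a) to secure finitely generated approximations and then passing to the limit in each $E(IR_{M_i})$.
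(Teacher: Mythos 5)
First, note that the paper does not prove this statement at all: it is quoted verbatim from Olberding \cite[Theorem 2.3]{O1} and used as a black box, so there is no internal proof to compare your attempt against. Judged on its own merits, your outline identifies the right skeleton (localize for necessity, glue by finite character for sufficiency) but leaves the three genuinely hard steps unproved, and these are exactly the content of Olberding's theorem. (i) In the forward direction, your argument for (c) is circular in form: the sentence ``the existence of a prime properly above $PR_P$ forces the trace ideal structure of $R_P$ to be linearly ordered, hence $R_P$ to be a valuation domain'' is a restatement of the conclusion, not a derivation. Olberding's proof of this point requires a detailed analysis of two-generated ideals and of the overring structure of local stable domains; nothing in your sketch substitutes for it. (ii) Your route to (d) via Clifford regularity would need the finite character of Clifford regular (or finitely stable) domains, which is Bazzoni's theorem \cite[Theorem 4.7]{B4} --- a result that is deeper than, and historically later than, the statement you are trying to prove; invoking ``one constructs an ideal $I\ni x$ whose class cannot be von Neumann regular'' names the difficulty without resolving it. A self-contained proof of (d) from stability alone is the second major gap.

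(iii) In the backward direction the decisive step is the purely local statement: a local finitely stable domain whose maximal ideal is stable and whose localizations at nonmaximal primes are valuation domains is itself stable. You dispose of this with ``the local structure theory gives that $R_{M_i}$ is stable,'' but that local structure theory is precisely Corollary \ref{corOlb} of the paper, which is \emph{deduced from} Theorem \ref{Olb}; assuming it here is circular. Once the local statement is granted, the globalization is cleaner than your ``Chinese-remainder patching'' suggests: with finite character one shows directly that colons localize on the relevant ideals (as in \cite[Corollary 5.2]{AZ}, used in Proposition \ref{IFC} and Proposition \ref{prop:loc} of the paper), so that $I(I:I^2)R_M=IR_M(IR_M:(IR_M)^2)=(IR_M:IR_M)$ for each of the finitely many $M\supseteq I\cap R$ of interest, whence $I(I:I^2)=(I:I)$ by intersecting; no explicit patching of inverses is needed. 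In summary, the proposal is an honest road map but not a proof: each of the steps it defers is a substantial theorem in its own right.
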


We recall that any valuation domain $V$ is Clifford regular \cite[Theorem 3]{BS} and that a valuation domain is stable if and only if it is strongly discrete, that is $(PV_P)^2\neq PV_P$ for each nonzero prime ideal $P$ \cite[Proposition 5.3.8]{FHP}.

 \begin{cor} \label{corOlb} Let $R$ be a local finitely stable (in particular Clifford regular) domain with maximal ideal $M$. The following conditions are equivalent:
 \begin{itemize}
\item[(i)]  $R$ is (strongly) stable;
\item[(ii)] $R_P$ is a valuation domain for each nonzero nonmaximal prime ideal $P$ and $PR_P$ is a stable ideal for each nonzero prime ideal $P$;
\item[(iii)] $R_P$ is a strongly discrete valuation domain for each nonzero nonmaximal prime ideal $P$ and $M$ is a stable ideal.
\end{itemize}
 \end{cor}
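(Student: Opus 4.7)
The plan is to derive both equivalences directly from Olberding's Theorem \ref{Olb} together with the recalled fact that a valuation domain is stable if and only if it is strongly discrete.

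For (i) $\Leftrightarrow$ (ii), the key observation is that because $R$ is local, the finite character condition (d) of Theorem \ref{Olb} is automatic and the finite stability condition (a) is part of the hypothesis, so what is left in Theorem \ref{Olb} is precisely the conjunction of (b) and (c), which is exactly (ii). The parenthetical ``strongly'' in (i) then comes for free from the fact, quoted just before the corollary, that every local stable domain is strongly stable.

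For (iii) $\Rightarrow$ (ii), the valuation-domain clause transfers verbatim, and stability of $PR_P$ follows because for a nonmaximal nonzero $P$ a strongly discrete valuation is stable (so its maximal ideal $PR_P$ is stable), while for $P = M$ the stability is part of (iii). For the converse (ii) $\Rightarrow$ (iii), the stability of $M$ is just (ii) applied to $P = M$, and the remaining task is to upgrade ``$R_P$ is a valuation with $PR_P$ stable'' to ``$R_P$ is strongly discrete'' whenever $P$ is nonzero and nonmaximal. I plan to accomplish this by re-applying Theorem \ref{Olb} to the local domain $R_P$: its conditions (a), (c), (d) are automatic for a local valuation, and its condition (b) asks for stability of $QR_{Q}$ for every nonzero prime $Q \subseteq P$ of $R$. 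Every such $Q$ is nonzero and nonmaximal in $R$ (since $P$ is), so (ii) delivers exactly this stability, making $R_P$ stable and hence, by the recalled equivalence, strongly discrete.

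The only minor subtlety is the prime-bookkeeping between $R$ and its localizations $R_P$: nonzero primes of $R_P$ must be identified with nonzero primes of $R$ contained in $P$ (all of which are nonmaximal because $P$ is), so that the hypotheses of (ii) supply what is required to invoke Theorem \ref{Olb} inside $R_P$. Beyond this routine check, the proof consists essentially of feeding the right input to Theorem \ref{Olb} and to the valuation-domain equivalence.
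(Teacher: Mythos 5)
Your proposal is correct. Two of the three legs coincide with the paper's argument: the equivalence (i) $\Leftrightarrow$ (ii) is read off from Theorem \ref{Olb} exactly as you describe (with locality killing condition (d) and the hypothesis supplying (a)), and (iii) $\Rightarrow$ (ii) is in both cases the one-line observation that strongly discrete valuation domains are stable. Where you genuinely diverge is the remaining leg: the paper closes the cycle by proving (i) $\Rightarrow$ (iii), quoting Corollary \ref{overrings} (equivalently Olberding's theorem that overrings of stable domains are stable) to conclude that each $R_P$ is a stable valuation domain, hence strongly discrete. You instead prove (ii) $\Rightarrow$ (iii) by feeding Theorem \ref{Olb} a second time to the local domain $R_P$, checking that its four conditions hold there (finite stability and finite character are automatic for a valuation domain, and the stability of $QR_Q$ for primes $Q\subseteq P$ is exactly what (ii) supplies after the standard identification of primes of $R_P$ with primes of $R$ contained in $P$). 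Your route has the small virtue of not needing the transfer-to-overrings machinery at all --- everything rests on Theorem \ref{Olb} plus the stable/strongly-discrete equivalence for valuation domains --- at the cost of a second, slightly more bureaucratic application of Theorem \ref{Olb}; the paper's route is shorter but imports an extra external result.
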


 \begin{proof} (i) $\lra$ (ii) follows from Theorem \ref{Olb}.

 (i) $\ra$ (iii) For each nonmaximal prime ideal $P$, $R_P$ is a valuation domain (via (ii)) and is stable (Corollary \ref{overrings}). Hence $R_P$ is strongly discrete \cite[Proposition 5.3.8]{FHP} and clearly $M$ is stable.

 (iii) $\ra$ (ii) because strongly discrete valuation domains are stable \cite[Proposition 4.1]{O3}.
 \end{proof}

 The problem of deciding when a local one-dimensional Clifford regular domain is stable was investigated by  Zanardo in \cite{Z}.  Next theorem adds new conditions to  \cite[Theorem 2.12]{Z}.

 \begin{theorem} \label{onedimstable} Let $R$ be a local one-dimensional domain with maximal ideal $M$ and let $E:=(M:M)$. The following statements are equivalent:
\begin{itemize}
\item[(i)]  $R$ is (strongly) stable;
\item[(ii)] $R$ is Clifford regular and $M$ is a stable ideal;
\item[(iii)]  $R$ is Clifford regular and each maximal ideal of $E$ is divisorial;
\item[(iv)] $R$ is Clifford regular and each $t_E$-maximal ideal of $E$ is divisorial.
\end{itemize}
\end{theorem}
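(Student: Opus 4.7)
My plan is to establish the equivalences via the cycle (i) $\Leftrightarrow$ (ii), (i) $\Rightarrow$ (iii), (iii) $\Rightarrow$ (iv), and (iv) $\Rightarrow$ (ii). The first three steps are either known or essentially formal; the new content lies in the final step.

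For (i) $\Leftrightarrow$ (ii), the forward direction is immediate, since stability gives Clifford regularity (Proposition \ref{prop1}(1)) and in particular stability of $M$. For (ii) $\Rightarrow$ (i) I would invoke Corollary \ref{corOlb}: Clifford regularity of $R$ supplies finite stability, and in dimension one there are no nonzero nonmaximal primes, so the only surviving hypothesis is precisely that $M$ be stable. The implication (i) $\Rightarrow$ (iii) I would take from Zanardo's \cite[Theorem 2.12]{Z}, since it is the starting point to which the present theorem adds new conditions. The implication (iii) $\Rightarrow$ (iv) is formal: a divisorial ideal is in particular a $t$-ideal, so under (iii) every maximal of $E$ is a $t_E$-ideal, hence a $t_E$-maximal; conversely every $t_E$-maximal is contained in a maximal ideal of $E$ which, by (iii), is itself a $t_E$-ideal, forcing equality. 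Thus the $t_E$-maximals coincide with the maximals of $E$ and all are divisorial.

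The crux is (iv) $\Rightarrow$ (ii). Since $R$ is Clifford regular, so is $M$, and by Lemma \ref{lemmaX}(2,3) the trace $T:=T(M)=M(E:M)$ satisfies $T^{2}=T$ and $(E:T)=E$. The key observation is that $E$ is one-dimensional: indeed $E\subseteq K$ is a proper overring of $R$ (the equality $E=K$ would force $M$ to be a $K$-submodule of $K$, impossible for a nonzero proper ideal), and proper overrings of one-dimensional domains are one-dimensional. Consequently every nonzero prime of $E$ is of height one, hence minimal over a nonzero principal ideal, hence a $t_E$-prime; so the maximals of $E$ are exactly its $t_E$-maximals and by (iv) are divisorial. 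Now, if $T\subsetneq E$ then $T$ lies in some maximal $N$ of $E$, and from $T\subseteq N\subsetneq E$ we deduce $E=(E:T)\supseteq (E:N)\supseteq E$; hence $(E:N)=E$ and $N^{v_E}=(E:(E:N))=E$, contradicting the divisoriality of $N$. Therefore $T=E$, i.e.\ $M(E:M)=E$, meaning $M$ is invertible in $E=E(M)$; that is, $M$ is stable, which is (ii).

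The main obstacle is (iv) $\Rightarrow$ (ii), and within it the decisive technical point is the one-dimensionality of $E$, which forces every maximal ideal of $E$ to be a $t_E$-maximal so that the $t_E$-hypothesis in (iv) bites against the trace $T(M)$. Without this input one could not exclude the possibility that $T(M)^{t_E}=E$, in which case $T(M)$ would be contained in no $t_E$-maximal of $E$ and the argument would break down.
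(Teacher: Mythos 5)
Your overall architecture is close to the paper's and most steps are fine, but there is one genuine gap, and it sits exactly at the point you yourself identify as decisive: the one-dimensionality of $E$. You justify it by the general principle that ``proper overrings of one-dimensional domains are one-dimensional,'' and this is false. The Krull dimension of an overring of $R$ is controlled by the \emph{valuative} dimension of $R$, not by its Krull dimension, and one-dimensional non-Jaffard domains exist: for instance, if $V_0=\mathbb{Q}(y)+M$ is a rank-one valuation domain, then $R=\mathbb{Q}+M$ is local and one-dimensional, while its proper overring $W=\mathbb{Q}[y]_{(y)}+M$ has dimension two. (Your observation that every nonzero prime of a proper overring contracts to a nonzero prime of $R$ is correct, but incomparability can fail, so two comparable nonzero primes of the overring may both contract to $M$.) What saves the theorem is the hypothesis you are not invoking at this point: since $R$ is Clifford regular it is finitely stable \cite[Proposition 2.3]{B3}, hence $E=(M:M)$ is an \emph{integral} extension of $R$ \cite[Lemma 4.1]{O2}, and integrality does force $\dim E=\dim R=1$; this is precisely how the paper obtains $\Max(E)=\tmax(E)$, i.e.\ the equivalence of (iii) and (iv). Once that is granted, your argument that $T(M)\subsetneq E$ would contradict the divisoriality of a maximal ideal containing it is correct and is essentially the paper's proof of (iii) $\Rightarrow$ (ii): the paper derives $T(M)^{v_E}=E$ from Proposition \ref{prop1}(2), you from Lemma \ref{lemmaX}(3), and both then conclude that $M$ is invertible in $E$.

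A second, smaller concern is your disposal of (i) $\Rightarrow$ (iii) by citing Zanardo's Theorem 2.12: the paper presents (iii) and (iv) as the \emph{new} conditions being added to that theorem, so this implication should not be outsourced to it. The paper proves it directly: overrings of stable domains are stable \cite[Theorem 5.1]{O2}, so $E$ is a stable domain, and every maximal ideal of a stable domain is divisorial \cite[Proposition 1.5]{GP}. With these two repairs your cycle (i) $\Leftrightarrow$ (ii), (i) $\Rightarrow$ (iii) $\Rightarrow$ (iv) $\Rightarrow$ (ii) closes correctly.
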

\begin{proof} (i) $\lra$ (ii) by Proposition \ref{corOlb}.

(i) $\ra$ (iii) If $R$ is stable, by Proposition \ref{prop1}(1) $R$ is Clifford regular. Since overrings of stable domains are stable \cite[Theorem 5.1]{O2}, $E:=(M:M)$ is a stable overring of $R$. Hence each maximal ideal of $E$ is divisorial by \cite[Proposition 1.5]{GP}.

(iii) $\ra$ (ii)  By Proposition \ref{prop1}(2), $M$ is $v_E$-invertible in $E$. Since each maximal ideal of $E$ is divisorial, it follows that $M$ is invertible in $E$.

(iii) $\lra$ (iv) If $R$ is Clifford regular, then $R$ is finitely stable \cite[Proposition 2.3]{B3}.  Hence $E:=(M:M)$ is an integral extension of $R$ \cite[Lemma 4.1]{O2} and so $E$ is one-dimensional. It follows that $\Max(E)=\tmax(E)$.
\end{proof}

Recall that a \emph{Mori domain} is a domain satisfying the ascending chain condition on divisorial ideals.  This condition implies that the $v$-operation  and the $t$-operation coincide.
By the previous theorem, we get that Clifford regularity and stability coincide for one-dimensional Mori domains.  The study of star regularity and star stability for Mori domains is deepened in \cite{GP2}.

\begin{cor} \label{Moristable} Let $R$ be a local one-dimensional Mori domain.
The following conditions are equivalent:
\begin{itemize}
\item[(i)] $R$ is Clifford regular;
\item[(ii)] $R$ is (strongly) stable;
\item[(iii)] $R$ is Boole regular.
\end{itemize}
\end{cor}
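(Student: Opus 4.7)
My plan is to establish the cycle $(i)\Leftrightarrow(ii)\Leftrightarrow(iii)$ by first handling three easy implications and then the single substantive one. The implication $(ii)\Rightarrow(i)$ is Proposition \ref{prop1}(1), and $(iii)\Rightarrow(i)$ holds because (as noted in Section 1) Boole regularity implies Clifford regularity. For $(ii)\Rightarrow(iii)$ I would combine the fact recalled at the opening of Section 3 that any local stable domain is strongly stable with Proposition \ref{prop1}(3), which identifies strongly stable with Boole regular plus stable.

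The substantive direction is $(i)\Rightarrow(ii)$, and my plan is to verify condition (iv) of Theorem \ref{onedimstable}. Since $R$ is assumed Clifford regular, it suffices to exhibit that every $t_E$-maximal ideal of $E:=(M:M)$ is divisorial in $E$. First I would observe that, because $R$ is a one-dimensional local Mori domain, the $v$- and $t$-operations on $R$ coincide, and the unique maximal ideal $M$ -- being a height-one prime and hence a $t$-prime -- is a $t$-maximal ideal of $R$ and therefore divisorial. Setting aside the trivial case in which $R$ is already a DVR, $M$ is not invertible; Clifford regularity gives finite stability, so by \cite[Lemma 4.1]{O2} (as cited in the proof of Theorem \ref{onedimstable}) $E$ is integral over $R$ and hence one-dimensional. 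A short computation exploiting the non-invertibility of $M$ yields $E=(R:M)$, so $E$ is a fractional overring of $R$, and its $t_E$-maximal ideals coincide with its (height-one) maximal ideals.

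The crucial remaining step -- and the main obstacle -- is to deduce that every maximal ideal of $E$ is divisorial. My plan is to obtain this by showing that $E$ is itself a Mori domain, because in any Mori domain every $t$-maximal ideal is a $v$-ideal and hence divisorial. Here I would invoke the standard fact that the endomorphism ring $(I:I)$ of a divisorial ideal $I$ of a Mori domain is again Mori, applied to the divisorial maximal ideal $I=M$. Once $E$ is known to be Mori, Theorem \ref{onedimstable}(iv) immediately yields the (strong) stability of $R$, completing $(i)\Rightarrow(ii)$ and hence the corollary.
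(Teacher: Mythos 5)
Your proposal is correct and follows essentially the same route as the paper: the easy implications are dispatched via Proposition \ref{prop1} and the fact that a local stable domain is strongly stable, and the substantive direction (i) $\Rightarrow$ (ii) is obtained by showing $E:=(M:M)$ is Mori (since $M$, being a height-one prime, is divisorial and the endomorphism ring of a divisorial ideal of a Mori domain is Mori --- the fact the paper cites from Roitman), so that $t_E=v_E$ and Theorem \ref{onedimstable} applies. The detour through the one-dimensionality of $E$ and $E=(R:M)$ is harmless but unnecessary once $E$ is known to be Mori, since then every $t_E$-maximal ideal is automatically divisorial.
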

\begin{proof} (i) $\ra$ (ii) Let $M$ be the maximal ideal of $R$. Since $R$ is Mori, then $E:=(M:M)$ is Mori, because $M$ being of height one is a $t$-ideal (\cite[Proposition 4.3]{CCI} and \cite[Proposition 6.3]{CCII}).
Hence the $t$- and the $v$-operation coincide on $E$ and we can apply Theorem \ref{onedimstable}.

(ii) $\ra$ (iii) $\ra$ (i) are clear.
\end{proof}

We now consider \emph{pseudo-valuation domains}. Let $V$ be  a valuation domain with maximal ideal $M$ and residue field $K$ and let $k$ be a subfield of $K$. Let $R:=\pi^{-1}(k)$ be the pseudo-valuation domain arising from the following pullback diagram:

$$  \begin{CD}
        R   @>>>    k\\
        @VVV        @VVV    \\
        V  @>\pi>>   K\\
        \end{CD}$$

 (where $\pi$ is the canonical surjection).

  \begin{prop} \label{PVD1} With the notation above, assume that $R\neq V$. The following conditions are equivalent:
\begin{itemize}
\item[(i)] $R$ is Clifford regular;
\item[(ii)] $R$ is divisorial;
\item[(iii)] $[K:k] = 2$.
\end{itemize}
\end{prop}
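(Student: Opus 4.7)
The plan is to prove the cycle (ii)$\Rightarrow$(i)$\Rightarrow$(iii)$\Rightarrow$(ii), invoking one classical result and otherwise using the machinery already developed in the paper.

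For (ii)$\Rightarrow$(i), observe that if $R$ is divisorial, then $d=v$ on $\mathcal{F}(R)$, and hence $d=t=v$. Combined with the Kabbaj--Mimouni theorem quoted in the final remark of Section~1---every PVD is Clifford $t$-regular---this immediately yields that $R$ is Clifford ($d$-)regular. The implication (iii)$\Rightarrow$(ii) is the classical Bastida--Gilmer characterization of divisoriality for $D+M$ constructions with $D$ a field: such a PVD with $R\neq V$ is divisorial precisely when $[K:k]=2$, a fact that can be invoked from the existing literature on divisorial pullbacks.

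The main obstacle is (i)$\Rightarrow$(iii). Working by contradiction, suppose $R$ is Clifford regular with $R\neq V$ and $[K:k]\geq 3$; the goal is to produce a non-regular ideal. A first step is to use Lemma~\ref{lemmaX} applied to $I=M$ (noting that $E(M)=V$ and $T(M)=M(V:M)$) to reduce to the case that $V$ is a DVR: the idempotency $T(M)^2=T(M)$ either forces $T(M)=V$, i.e.\ $M$ principal in $V$, or the exceptional case $T(M)=M$ idempotent in $V$, which can be ruled out by examining the regularity of suitable non-principal sub-ideals of $M$. Writing $M=\pi V$, the ideals $I$ with $M^2\subsetneq I\subsetneq M$ correspond bijectively to proper nonzero $k$-subspaces $W\subseteq M/M^2\cong K$ via $I=\pi U$, where $U=\pi^{-1}(W)\cap V$. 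A direct computation gives $I^2=\pi^2 U'$ with $U'=\pi^{-1}(WW)\cap V$ and $WW$ the $k$-span of products in $K$, and the Clifford regularity equation $I=I^2(I:I^2)$ from Lemma~\ref{lemma1}(ii) translates, after explicit bookkeeping modulo $M$, into the vector-space identity
\[
W=Y\cdot(WW),\qquad Y:=\{\gamma\in K:\gamma\cdot WW\subseteq W\}.
\]
The contradiction is then obtained by picking, under $[K:k]\geq 3$, a two-dimensional $W$ with $WW=K$ (for instance $W=k+k\alpha$ for a generator $\alpha$ of a degree-$\geq 3$ subextension of $K/k$): this forces $Y=0$ and therefore $Y\cdot(WW)=0\neq W$, violating the identity.

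The delicate steps are the reduction to the DVR case (handling the idempotent-maximal-ideal possibility) and the clean translation of the Clifford regularity condition into the duality equation on $k$-subspaces of $K$; once these are in place, the choice of $W$ producing the contradiction when $[K:k]\geq 3$ is routine.
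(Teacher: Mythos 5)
The paper disposes of this proposition entirely by citation: (i) $\Leftrightarrow$ (iii) is \cite[Theorem 5.1(2)]{KM1} and (ii) $\Leftrightarrow$ (iii) is \cite[Corollary 3.5]{M2}. Your implications (ii) $\Rightarrow$ (i) (divisoriality gives $d=t$, and every PVD is Clifford $t$-regular) and (iii) $\Rightarrow$ (ii) (the Bastida--Gilmer/Mimouni divisoriality criterion) are fine and consistent with what the paper cites. The problem lies in your self-contained argument for (i) $\Rightarrow$ (iii), which has two genuine gaps.

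First, the ``reduction to the DVR case'' is not justified. Lemma \ref{lemmaX}(2) applied to $I=M$ does give the dichotomy $M$ principal in $V$ versus $M=M^2$, but the idempotent case cannot simply be ``ruled out by examining suitable sub-ideals'': when $[K:k]=2$, a PVD over a valuation domain with idempotent maximal ideal \emph{is} Clifford regular (this is part of the very equivalence being proved), so no argument from Clifford regularity alone excludes $M=M^2$. You must instead run a parallel contradiction argument in that case --- feasible, by replacing the $k$-subspaces of $M/M^2$ with $k$-subspaces of $V/M\cong K$, i.e.\ working with ideals $M\subsetneq I\subsetneq V$ --- but as written you give no argument. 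Second, and more seriously, your concluding choice of $W$ need not exist: if $K/k$ is purely inseparable of exponent one in characteristic $2$ (say $k=\mathbb{F}_2(x,y)$ and $K=k(x^{1/2},y^{1/2})$, so $[K:k]=4$), then every $\alpha\in K$ satisfies $[k(\alpha):k]\le 2$ and there is no generator of a degree-$\ge 3$ subextension. Your translation of $I=I^2(I:I^2)$ into $W=Y\cdot(WW)$ is correct, and the argument can be repaired in that situation by taking a three-dimensional $W=k+k\alpha+k\beta$ (a three-dimensional $k$-subalgebra of a field would be a cubic subfield and would contain an element of degree $3$, so $WW\supsetneq W$ and again $Y=0$), but this case must be treated explicitly for the proof to be complete.
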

\begin{proof} (i) $\lra$ (iii) is \cite[Theorem 5.1(2)]{KM1} and (ii) $\lra$ (iii) is in \cite[Corollary 3.5]{M2}.
\end{proof}

  In the one-dimensional case, the next result can be easily deduced from Theorem \ref{onedimstable}.

   \begin{prop} With the notation above,  assume that $R\neq V$. The following conditions are equivalent:
\begin{itemize}
\item[(i)] $R$ is (strongly) stable;
\item[(ii)] $R$ is Boole regular and $V$ is strongly discrete;
\item[(iii)] $R$ is Clifford regular and $V$ is strongly discrete;
\item[(iv)] $V$ is strongly discrete and $[K:k]= 2$;
\item[(v)] $R$ is totally divisorial (i.e., $R$ and all of its overrings are divisorial).

\end{itemize}
\end{prop}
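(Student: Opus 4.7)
The plan is to cycle through (i) $\Rightarrow$ (ii) $\Rightarrow$ (iii) $\lra$ (iv) $\Rightarrow$ (i) using Proposition \ref{PVD1} and Theorem \ref{Olb}, then add (i) $\lra$ (v) from Olberding's characterization of strongly stable domains as exactly the totally divisorial ones. Throughout I rely on the standard structural facts for a PVD $R \neq V$: $R$ and $V$ share the common maximal ideal $M$ with $V = (M:M)$; every prime $P \subsetneq M$ of $R$ is also a prime of $V$ with $R_P = V_P$ (for $x \in V \sm R$ pick any $s \in M \sm P$, so that $xs \in xM \sub M \sub R$, giving $x = xs/s \in R_P$); and $V$ is a fractional overring of $R$, since $mV \sub M \sub R$ for any nonzero $m \in M$. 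The equivalence (iii) $\lra$ (iv) is then immediate from Proposition \ref{PVD1}.

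The core implication is (iv) $\Rightarrow$ (i), which proceeds via Theorem \ref{Olb}. Under (iv), $R$ is Clifford regular by Proposition \ref{PVD1}, hence finitely stable by \cite[Proposition 2.3]{B3} (condition (a)); for each nonzero $P \subsetneq M$, $R_P = V_P$ is a strongly discrete valuation domain, hence stable with stable maximal ideal (conditions (b) and (c) off $M$); and (d) is automatic since $R$ is local. It remains to show that $M$ is stable in $R$, that is, invertible in $V = (M:M)$. Since $V$ is strongly discrete, $M$ is non-idempotent in $V$; and in a valuation domain a non-idempotent maximal ideal is principal (given $x \in M \sm M^2$, any $y \in M$ with valuation $v(y) < v(x)$ would force $x \in yM \sub M^2$, a contradiction, so $v(x)$ is minimal positive and $M = xV$). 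Thus $M$ is principal, hence invertible, in $V$. Theorem \ref{Olb} then gives $R$ stable, and being local $R$ is strongly stable by \cite[Lemma 3.1]{O2}.

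For the remaining implications, (i) $\Rightarrow$ (iii) holds because a stable domain is Clifford regular (Proposition \ref{prop1}(1)), and Corollary \ref{overrings}(b) applied to the (trivially $d$-compatible) fractional overring $V$ makes $V$ stable, hence strongly discrete. Combining this with Proposition \ref{prop1}(3) (strongly stable equals Boole regular plus stable) yields (i) $\Rightarrow$ (ii); and (ii) $\Rightarrow$ (iii) is trivial. Finally (i) $\lra$ (v) is Olberding's theorem that a domain is strongly stable if and only if it is totally divisorial. The main obstacle will be the verification that $M$ is stable in $R$ from the strong discreteness of $V$, which reduces to the elementary lemma on non-idempotent maximal ideals in valuation domains recorded above.
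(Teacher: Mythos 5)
Your cycle (i) $\ra$ (ii) $\ra$ (iii) $\lra$ (iv) $\ra$ (i) is sound, and your route for the key step (iv) $\ra$ (i) is genuinely different from the paper's: you verify conditions (a)--(d) of Olberding's Theorem \ref{Olb} directly (finite stability from Clifford regularity via \cite[Proposition 2.3]{B3}, $R_P=V_P$ strongly discrete off $M$, and $M$ principal in $V=(M:M)$ because a non-idempotent maximal ideal of a valuation domain is principal), whereas the paper instead proves (iv) $\ra$ (v) $\ra$ (i), deducing divisoriality of $R$ from $[K:k]=2$ via \cite[Corollary 3.5]{M2}, listing the overrings of $R$ as $V$ and its localizations, and then invoking ``totally divisorial $\ra$ stable'' from \cite[Theorem 3.12]{O5}. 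Your verification of Theorem \ref{Olb}'s hypotheses is correct and arguably more self-contained on that leg.

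The genuine gap is in your treatment of (v). You assert that ``a domain is strongly stable if and only if it is totally divisorial'' is Olberding's theorem; it is not, and it is false as a biconditional: a Dedekind domain with nontrivial class group is totally divisorial (all its ideals and all ideals of its overrings are invertible, hence divisorial) but not strongly stable, since an invertible ideal $I$ has $(I:I)=R$ and strong stability would force every ideal to be principal. What \cite[Theorem 3.12]{O5} gives is that a totally divisorial domain is stable (equivalently, totally divisorial amounts to stable \emph{and} divisorial). The direction (v) $\ra$ (i) survives in your argument because $R$ is local, so stable implies strongly stable by \cite[Lemma 3.1]{O2}; but your (i) $\ra$ (v) is unsupported, since ``strongly stable $\ra$ totally divisorial'' needs the additional input that $R$ is divisorial. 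The repair is available from what you have already proved: (i) $\ra$ (iv) gives $[K:k]=2$, whence $R$ is divisorial by Proposition \ref{PVD1}, and then either apply Olberding's actual equivalence (stable and divisorial implies totally divisorial), or argue as the paper does that the only overrings of $R$ are $V$ and its localizations, all divisorial because $V$ is strongly discrete. Without one of these supplements, (i) $\ra$ (v) is not established.
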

\begin{proof} (i) $\ra$ (ii)  An overring of a stable domain is stable \cite[Theorem 5.1]{O2} and stable valuation  domains are strongly discrete \cite[Proposition 5.3.8]{FHP}, hence $V$ is strongly discrete. Moreover $R$ is Boole regular by Proposition \ref{prop1}(3).

(ii) $\ra$ (iii) is clear and (iii) $\ra$ (iv) follows from \cite[Theorem 5.1]{KM1}.

(iv) $\ra$ (v)
Since $[K:k]=2$,  $R$ is divisorial by \cite[Corollary 3.5]{M2}.  Moreover, this is true for all the overrings of $R$. Indeed the only overrings of $R$ are $V$ and its localizations, which are divisorial since $V$ is strongly discrete \cite[Proposition 4.1]{O3}.

(v) $\ra$ (i) A totally divisorial domain is stable by \cite[Theorem 3.12]{O5}.
\end{proof}

 \section{The $\ast$-integral closure}

Given a semistar operation $\ast$ on $R$, the \emph{$\ast$-integral
closure} of $R$ is the integrally closed overring of $R$ defined by
$R^{[\ast]}:= \bigcup \{(J^\ast:J^\ast); \, J\in\ol{\mc F}(R) \mbox{ finitely generated}\}$ \cite{fl01}. Clearly $R^{[\ast]}=R^{[\ast_f]}$.
We say that $R$ is \emph{$\ast$-integrally closed} if $R=R^{[\ast]}$. In this case it easy to see that $\ast$ is necessarily a (semi)star operation on $R$.

When $\ast=d$ is the identity, we obtain the integral closure of $R$, here denoted by $R^\prime$. When $\ast=v$,
$R^{[v]}= R^{[t]}$ is called the \emph{pseudo-integral closure of $R$} \cite{AHZ}.
We have $R\sub R^\prime\sub R^{[\ast]}$. In addition, if  $\widetilde{R}:= \bigcup \{(I^v:I^v); \, I\in \ol{\mc F}(R) \}$ is the \emph{complete integral closure} of $R$, we also have $R^{[\ast]}\sub R^{[v]}\sub \widetilde{R}$.

The $w$-integral closure has been widely studied \cite{W, CZ} and many of its properties have been extended to integral closures with respect to semistar operations spectral and of finite type. For example, if $\ast=\tilde{\ast}$,
 $R$ is $\ast$-integrally closed (i.e., $R=R^{[\ast]}$) if and
only if $R$ is integrally closed (i.e., $R=R^\prime$) \cite[Lemma 4.13]{EFP}. This shows that, if $\ast^\prime$ is any semistar operation  spectral and of finite type on $R^{[\ast]}$, we have $(R^{[\ast]})^{[\ast^\prime]}=R^{[\ast]}$.

In addition, $R^{[\ast]}$ satisfies $\ast$LO, $\ast$GU and $\ast$INC \cite[Lemma2.15(b)]{CF}.

\begin{theorem} \label{*LO}
Let  $\ast = \tilde{\ast}$ be a semistar operation spectral and of finite type on $R$ and let $D$ be an overring of $R$ such that $D\sub R^{[\ast]}$. Then:
\begin{itemize}
\item[(a)] {\rm($\ast$LO)} For each $\ast$-prime ideal $P$ of $R$, there is a $\ast_{\vert_D}$-prime ideal $Q$ of $D$ such that $Q\cap R=P$.
\item[(b)] {\rm ($\ast$GU)} Given two $\ast$-primes $P_1 \subsetneq P_2$ in $R$ and a $\ast_{\vert_D}$-prime $Q_1$  in $D$ such that $Q_1 \cap R = P_1$, there exists a  $\ast_{\vert_D}$-prime $Q_2$ in $D$ satisfying $Q_1 \subsetneq Q_2$ and $Q_2 \cap R=P_2$.
\item[(c)] {\rm($\ast$INC)} Two different $\ast_{\vert_D}$-primes of $D$ with the same contraction in $R$ cannot be comparable.
\end{itemize}
\end{theorem}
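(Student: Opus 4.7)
My strategy is to derive (a), (b), (c) for the extension $R\subseteq D$ by pulling back the corresponding properties for the extension $R\subseteq R^{[\ast]}$, which are already provided by \cite[Lemma2.15(b)]{CF}. The link between the two is provided by contracting $\ast_{\vert_{R^{[\ast]}}}$-primes of $R^{[\ast]}$ along the inclusion $D\subseteq R^{[\ast]}$. Since $\ast=\tilde\ast$ is spectral of finite type, the restriction $\ast_{\vert_D}$ is again spectral of finite type on $D$, so a prime $Q$ of $D$ is a $\ast_{\vert_D}$-prime exactly when $Q$ is contained in some $\ast_{\vert_D}$-maximal ideal of $D$; equivalently, when $Q^{\ast_{\vert_D}}\subsetneq D$. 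I will need the complementary result that \cite[Lemma2.15(b)]{CF}, applied to $D$ with $\ast_{\vert_D}$, gives $\ast_{\vert_D}$LO, $\ast_{\vert_D}$GU and $\ast_{\vert_D}$INC for the extension $D\subseteq D^{[\ast_{\vert_D}]}$; and a direct check shows $R^{[\ast]}\subseteq D^{[\ast_{\vert_D}]}$, since any element $x\in R^{[\ast]}$ satisfies $xJ^\ast\subseteq J^\ast$ for a finitely generated $J\subseteq R$, and $J^{\ast_{\vert_D}}=J^\ast$ because the closures coincide on $R$-submodules.

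\textbf{Key lemma.} If $P^\sharp$ is a $\ast_{\vert_{R^{[\ast]}}}$-prime of $R^{[\ast]}$, then $Q:=P^\sharp\cap D$ is a $\ast_{\vert_D}$-prime of $D$. To prove it, I would choose a $\ast_{\vert_{R^{[\ast]}}}$-maximal ideal $M^\sharp$ of $R^{[\ast]}$ containing $P^\sharp$ and verify that $M^\sharp\cap D$ is a proper $\ast_{\vert_D}$-ideal: properness follows from $\ast_{\vert_D}$INC/LO for $D\subseteq R^{[\ast]}\subseteq D^{[\ast_{\vert_D}]}$ (it cannot be all of $D$, otherwise localizing at the $\ast_{\vert_D}$-maximal above $M^\sharp\cap D$ in $D^{[\ast_{\vert_D}]}$ yields a contradiction), and the $\ast_{\vert_D}$-ideal property comes from the spectral formula $I^{\ast_{\vert_D}}=\bigcap_{M\in\astmax(R)}IR_M$ together with $Q\subseteq P^\sharp$.

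\textbf{Assembling the three parts.} (a) Given $P\in\astspec(R)$, apply $\ast$LO for $R\subseteq R^{[\ast]}$ to obtain $P^\sharp$ above $P$ in $R^{[\ast]}$, and set $Q:=P^\sharp\cap D$; then $Q\cap R=P$, and by the key lemma $Q$ is a $\ast_{\vert_D}$-prime. (b) Given $P_1\subsetneq P_2$ in $\astspec(R)$ and $Q_1$ in $D$ above $P_1$, first lift $Q_1$ to a $\ast_{\vert_{R^{[\ast]}}}$-prime $Q_1^\sharp$ of $R^{[\ast]}$ over $P_1$ (using $\ast_{\vert_D}$LO for $D\subseteq R^{[\ast]}$, checking that its contraction to $R$ is forced to be $P_1$ by $\ast$INC in $R^{[\ast]}$), then apply $\ast$GU for $R\subseteq R^{[\ast]}$ to produce $Q_2^\sharp\supsetneq Q_1^\sharp$ above $P_2$, and contract: $Q_2:=Q_2^\sharp\cap D$. (c) If two $\ast_{\vert_D}$-primes $Q_1\subsetneq Q_2$ of $D$ share the contraction $P$ to $R$, lift them to comparable $\ast_{\vert_{R^{[\ast]}}}$-primes $Q_1^\sharp\subsetneq Q_2^\sharp$ in $R^{[\ast]}$ via $\ast_{\vert_D}$LO and $\ast_{\vert_D}$GU for $D\subseteq R^{[\ast]}$; both contract to the same prime of $R$, contradicting $\ast$INC for $R\subseteq R^{[\ast]}$.

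\textbf{Main obstacle.} The nontrivial step is the key lemma: ensuring the contraction of a $\ast$-prime of $R^{[\ast]}$ to $D$ remains a $\ast_{\vert_D}$-prime, which hinges on the delicate interplay between the spectral form of $\ast$ (via $\astmax(R)$) and the fact that $D$ sits below the $\ast$-integral closure. Once that lemma and its companion for the extension $D\subseteq R^{[\ast]}$ are in place, parts (a)--(c) follow almost formally by the lift/transfer/contract strategy above.
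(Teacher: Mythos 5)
Your proposal is correct and follows essentially the same route as the paper: invoke \cite[Lemma 2.15(b)]{CF} for the pair $R\subseteq R^{[\ast]}$ and transfer to an intermediate $D$ by contracting primes from $R^{[\ast]}$ (for LO and GU) and by lifting to $R^{[\ast]}$ via GU for the pair $D\subseteq R^{[\ast]}$ to contradict INC there. Your ``key lemma'' is exactly the paper's one-line observation that $(Q\cap D)^{\ast_{\vert_D}}=(Q\cap D)^{\ast}\subseteq Q^{\ast}\not\ni 1$, so the contraction remains a $\ast_{\vert_D}$-prime by spectrality; the detour through $M^\sharp$ and $D^{[\ast_{\vert_D}]}$ in your justification of ``properness'' is unnecessary, but the argument you give via the spectral formula is the right one.
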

\begin{proof} For $D=R^{[\ast]}$, this was proved in \cite[Lemma 2.15(b)]{CF}.

Now let $D$ be such that $R\sub D\sub R^{[\ast]}$ and let $P$ be a $\ast$-prime of $R$. Set $\dot{\ast}:=\ast_{\vert{R^{[\ast]}}}$  and let $Q$ be a $\dot{\ast}$-ideal of $R^{[\ast]}$ lying over $P$.  If $Q^\prime := Q \cap D$, clearly $Q^\prime$ lies over $P$ and $(Q^\prime)^{\ast_{\vert_D}} \neq D$, since $(Q^\prime)^{\ast_{\vert_D}} = (Q \cap D)^\ast \subseteq Q^\ast$ and $1 \not \in Q^\ast = Q^{\ast_{\vert_D}}$. This proves the $\ast$-LO for the pair $R,D$.

The $\ast$-GU is similar and $\ast$-INC is a consequence of $\ast$-INC for the pair $R, R^{[\ast]}$ and  $\ast_{\vert_D}$-GU for the pair $D, R^{[\ast]}$.
\end{proof}

Recall that, when $\ast = \tilde{\ast}$, the \emph{$\ast$-dimension} of $R$ (denoted by $\ast$-$\dim(R)$) is the supremum of the heights of the $\tilde{\ast}$-maximal ideals.

\begin{cor}  \label{lemmatdimone} Let  $\ast = \tilde{\ast}$ be a semistar operation spectral and of finite type on $R$ and let $D$ be an overring of $R$  such that $D\sub R^{[\ast]}$. Then
$\ast$-$\dim(R) = \ast_{\vert_D}$-$\dim(D)$. In particular, $R$ has $\ast$-dimension one if and only if $R^{[\ast]}$ has $\ast_{\vert R^{[\ast]}}$-dimension one.
 \end{cor}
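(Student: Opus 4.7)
The plan is to translate chains of $\ast$-primes in $R$ into chains of $\ast_{\vert_D}$-primes in $D$ (and back) using the $\ast$LO, $\ast$GU and $\ast$INC properties supplied by Theorem \ref{*LO}. First I would observe that, since $\ast = \tilde{\ast}$ is spectral and of finite type, a prime $P$ of $R$ is a $\ast$-prime if and only if $P^\ast \neq R$; in particular, any prime contained in a $\ast$-prime is itself a $\ast$-prime. This lets us rewrite
$$\ast\text{-}\dim(R) = \sup\{n : \text{there is a chain } P_0 \subsetneq \cdots \subsetneq P_n \text{ of } \ast\text{-primes of } R\}.$$
The same formula will apply to $\ast_{\vert_D}\text{-}\dim(D)$ once one checks that $\ast_{\vert_D}$ is itself spectral and of finite type: of finite type by \cite[Proposition 2.8]{fl01}, and spectral because the identity $(I \cap J)^\ast = I^\ast \cap J^\ast$ (valid by \cite[Corollary 3.9 and Proposition 4.23]{FH2000}) restricts to the same identity for $D$-submodules of $K$.

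Next I would set up the length-preserving correspondence between the two types of chains. Going down: for a chain $N_0 \subsetneq \cdots \subsetneq N_n$ of $\ast_{\vert_D}$-primes in $D$, the contractions $P_i := N_i \cap R$ are strictly nested by $\ast$INC, and each $P_i$ is a $\ast$-prime since $P_i^\ast \subseteq N_i^\ast = N_i$, which does not contain $1$. Going up: starting from a chain $P_0 \subsetneq \cdots \subsetneq P_n$ of $\ast$-primes in $R$, use $\ast$LO to produce a $\ast_{\vert_D}$-prime $N_0$ of $D$ lying over $P_0$, then iterate $\ast$GU to build $N_1, \ldots, N_n$ with $N_{i-1} \subsetneq N_i$ and $N_i \cap R = P_i$.

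Combining the two directions yields $\ast\text{-}\dim(R) = \ast_{\vert_D}\text{-}\dim(D)$, and the second assertion is simply the case $D = R^{[\ast]}$. The only genuinely delicate point is the verification that $\ast_{\vert_D}$ is spectral, so that the reformulation of $\ast_{\vert_D}\text{-}\dim(D)$ in terms of chains of $\ast_{\vert_D}$-primes is legitimate; everything else is a direct application of Theorem \ref{*LO}.
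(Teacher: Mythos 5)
Your argument is correct and is exactly the route the paper intends: the corollary is stated without proof as an immediate consequence of Theorem \ref{*LO}, and your chain-lifting via $\ast$LO/$\ast$GU together with the $\ast$INC contraction argument is the standard way to extract equality of dimensions from those three properties. Your preliminary reduction (that for a spectral operation every prime below a $\ast$-prime is a $\ast$-prime, so $\ast$-$\dim$ is the supremum of lengths of chains of $\ast$-primes) is the right bookkeeping step and matches the paper's earlier remark that for spectral $\ast$ a prime $P$ is a $\ast$-prime if and only if $P^\ast \subsetneq R$.
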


The integral closure of a Clifford regular domain is a Pr\"ufer domain \cite[Proposition 2.3]{KM1}: in fact a Clifford regular domain is finitely stable \cite[Proposition 2.3]{B3} and a finitely stable domain has  Pr\"ufer  integral closure \cite[Proposition 2.1]{Rush}. In addition, the integral closure of a Boole regular domain is Bezout \cite[Proposition 2.3]{KM1}.
When $\ast=\tilde{\ast}$, we extend these results to $\ast$-regularity; for $\ast$-stability see \cite[Theorem 2.3]{GP}.

 For a
star operation $\ast$, $R$ is called a \emph{Pr\"ufer $\ast$-multiplication domain} (for short a P$\ast$MD) if every nonzero finitely generated ideal of $R$ is $\ast_f$-invertible. It is clear that $R$ is a P$\ast$MD if and only if it is a P$\ast_f$MD; moreover, if $R$ is a P$\ast$MD, it is integrally closed (in fact, it is a P$v$MD) and  $\ast_f=\tilde{\ast}$ \cite[Theorem 3.1]{fjs}. Also recall that $R$ is called a  GCD-\emph{domain} if any two nonzero elements of $R$ have a greatest common divisor. A GCD-domain is a P$v$MD; more precisely $R$ is GCD-domain if and only if each $t$-finite ideal is principal.

\begin{theorem} \label{wic}  Let $\ast = \tilde{\ast}$ be a star operation spectral and of finite type.    If $R$ is Clifford $\ast$-regular, then $D:=R^{[\ast]}$ is a P$\dot{\ast}$MD and $\dot{\ast}=w_D= t_D$.  In addition, if $R$ is Boole $\ast$-regular,  $D$ is a GCD-domain. \end{theorem}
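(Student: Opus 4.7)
The plan is to prove the P$\dot\ast$MD conclusion by showing every finitely generated ideal of $D = R^{[\ast]}$ is $\dot\ast$-invertible, and then to derive $\dot\ast = w_D = t_D$ from standard P$\ast$MD theory. Let $J$ be a finitely generated ideal of $D$; after clearing denominators I may write $J = J_0 D$ with $J_0$ a finitely generated (fractional) ideal of $R$, and it suffices to show $J_0 D$ is $\dot\ast$-invertible. By Proposition \ref{fingen}, $J_0$ is $\ast$-stable, so, setting $E_0 := (J_0^\ast : J_0^\ast)$ and $A_0 := (E_0 : J_0^\ast)$, we have $(J_0 A_0)^\ast = E_0$ (using the standard identity $(IJ^\ast)^\ast = (IJ)^\ast$). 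The crucial structural observation is that $E_0 \sub R^{[\ast]} = D$ directly from the definition of the $\ast$-integral closure; consequently $E_0 D = D$ and $A_0 D \sub (D : J_0 D)$.

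Before invoking compatibility I verify $D^\ast = D$, so that $\dot\ast$ is a (semi)star operation on $D$: $D$ is the directed union of the $\ast$-ideals $E(F^\ast)$ as $F$ ranges over finitely generated ideals of $R$ (directedness follows from the inclusion $E(F_1^\ast) \cup E(F_2^\ast) \sub E((F_1 F_2)^\ast)$), and directed unions of $\ast$-ideals are $\ast$-ideals because $\ast$ is of finite type. The extension $R \sub D$ is then $(\ast, \dot\ast)$-compatible by Proposition \ref{lemma:comp}, and I compute
\[
((J_0 D)(D : J_0 D))^{\dot\ast} \;\supseteq\; (J_0 A_0 D)^{\dot\ast} \;=\; ((J_0 A_0)^\ast D)^{\dot\ast} \;=\; (E_0 D)^{\dot\ast} \;=\; D,
\]
so $J_0 D$ is $\dot\ast$-invertible. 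Thus $D$ is a P$\dot\ast$MD. By the standard P$\ast$MD theory, $\dot\ast_f = w_D$ and $D$ is also a P$v$MD, hence $w_D = t_D$; since $\dot\ast$ is already of finite type this gives $\dot\ast = w_D = t_D$.

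For the Boole case, $J_0$ is additionally Boole $\ast$-regular, so by Proposition \ref{prop1}(3) it is strongly $\ast$-stable, i.e.\ $J_0^\ast = yE_0$ for some $y \in K \sm \{0\}$. Then, by compatibility, $(J_0 D)^{\dot\ast} = (J_0^\ast D)^{\dot\ast} = (y E_0 D)^{\dot\ast} = yD$ is principal; consequently every finitely generated ideal of $D$ has principal $\dot\ast$-closure, which by $\dot\ast = t_D$ means principal $t_D$-closure, the defining property of a GCD-domain. I expect the main technical obstacle to be the careful bookkeeping of the various compatibilities and the verification that $D^\ast = D$; once the structural fact $E(J_0^\ast) \sub D$ is isolated, the $\ast$-stability (respectively, strong $\ast$-stability) of $J_0$ in $E_0$ propagates quite naturally to $\dot\ast$-invertibility (respectively, $\dot\ast$-principality) of $J_0 D$ in $D$.
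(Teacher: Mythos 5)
Your proposal is correct and follows essentially the same route as the paper: both arguments rest on Proposition \ref{fingen} (finitely generated ideals of a Clifford $\ast$-regular domain are $\ast$-stable) combined with the observation that the endomorphism ring of (the $\ast$-closure of) a finitely generated ideal is contained in $R^{[\ast]}$ by the very definition of the $\ast$-integral closure. The only real difference is one of execution: the paper applies Proposition \ref{fingen} directly to the finitely generated $D$-ideal $I$ and identifies $E(I^{\ast})$ with $D$ outright, whereas you apply it to the finitely generated $R$-ideal $J_0$ with $J=J_0D$ and then transport $\dot{\ast}$-invertibility up to $D$ via $(\ast,\dot{\ast})$-compatibility --- slightly longer, but it sidesteps having to know that $I$ itself is $\ast$-finite as an $R$-ideal, and your direct verification that $D^{\ast}=D$ replaces the paper's citation of the $t$-linkedness of $R^{[w]}$.
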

 \begin{proof}
Assume that $R$ is Clifford $\ast$-regular. We have that $\ast = w$, by Corollary \ref{ast=w}. Since $D:= R^{[w]}$ is $t$-linked \cite[Lemma 1.2]{CZ}, $\dot{w}$ is a star operation on $R^{[w]}$.
  We have to prove that each finitely generated ideal $I$ of $D$ is $\dot{w}$-invertible. Since $D$ is contained in the quotient field of $R$, $I$  is an ideal of $R$.
Since $R$ is Clifford $w$-regular, $I$ is $w$-stable by Proposition \ref{fingen}. Thus $I$ is $w_{\vert_E}$-invertible in $E(I^w)$. But $E(I^w) = E(I^{\dot{w}}) = D$, since we have remarked that the $w$-integral closure is always $\ast$-integrally closed for all star operations spectral and of finite type. Thus $I$ is $\dot{w}$-invertible in $D$.

Now assume that $I$ is Boole $w$-regular. Since $I$ is $w$-stable, then $I^w$ is principal in $E = D$ (Proposition \ref{prop1}(3)). Since $\dot{w}= t_D$, it follows that $D$ is a GCD-domain.
 \end{proof}

Bazzoni showed that an integrally closed Clifford regular domain is precisely a Pr\"ufer domain with finite character \cite[Theorem 4.5]{B3}. Later Kabbaj and Mimouni proved that a P$v$MD is Clifford $t$-regular if and only if it has $t$-finite character \cite[Theorem 3.2]{KM2} and gave an example of an integrally closed Clifford $t$-regular domains that is not  a P$v$MD \cite[Example 2.8]{KM2}.
However they conjectured that a pseudo-integrally closed Clifford $t$-regular domain $R$ be a P$v$MD \cite[Conjecture 3.1]{KM2} and gave a positive answer   for strongly $t$-discrete domains (domains such that $P\neq (P^2)^t$ for each $t$-prime ideal $P$) \cite[Corollary 3.12]{KM2}.
Finally Halter-Koch, in the language of ideal systems, proved that, for any star operation $\ast$ of finite type, a P$\ast$MD is Clifford $\ast$-regular if and only if it has $\ast$-finite character \cite[Theorems 6.3 and 6.11]{HK} and solved in positive the conjecture of Kabbaj and Mimouni \cite[Proposition 6.12]{HK}.

 \begin{theorem} \label{pvmd}  Let $\ast$ be a star operation  of finite type on a domain $R$.
 \begin{itemize}
 \item[(1)] If $R$ is $\ast$-integrally closed and Clifford $\ast$-regular, then $R$ is a P$v$MD.

 \item[(2)] The following conditions are equivalent:
  \begin{itemize}
  \item [(i)]  $R$ is $\tilde{\ast}$-integrally closed and Clifford $\tilde{\ast}$-regular;
   \item[(ii)] $R$ is integrally closed and Clifford $\tilde{\ast}$-regular;
  \item[(iii)] $R$ is a P$\ast$MD of $\ast$-finite character.
\end{itemize}
Under (any one of) these conditions $\tilde{\ast}=w$.
\end{itemize}
\end{theorem}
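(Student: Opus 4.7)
For part (1), the plan is to show that every nonzero finitely generated ideal $I$ of $R$ is $t$-invertible, which is precisely the P$v$MD property. The key reduction is the collapse of the endomorphism ring: for finitely generated $I$ one has $E(I^\ast)=(I^\ast:I^\ast)\sub R^{[\ast]}$ directly from the definition of the $\ast$-integral closure, and the hypothesis $R=R^{[\ast]}$ together with the automatic inclusion $R\sub E(I^\ast)$ forces $E(I^\ast)=R$. Since $R$ is Clifford $\ast$-regular, $I$ is a finitely generated Clifford $\ast$-regular ideal, so Proposition~\ref{prop1}(2) applies and yields that $I^\ast$ is $t_E$-invertible in $E=R$. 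As $\ast\leq t$ (because $\ast=\ast_f$), this translates to $t$-invertibility of $I$ in $R$, completing the proof of~(1).

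For (2), the equivalence (i)$\Leftrightarrow$(ii) is immediate, since for $\ast=\tilde{\ast}$ the notions of $\ast$-integrally closed and integrally closed coincide (as recalled before Theorem~\ref{*LO}). To prove (ii)$\Rightarrow$(iii), I apply part~(1) to the star operation $\tilde{\ast}$, which is of finite type and under which $R$ is $\tilde{\ast}$-integrally closed, to conclude that $R$ is a P$v$MD; Corollary~\ref{ast=w} then gives $\tilde{\ast}=w$. In any P$v$MD one has $w=t$ (apply the identity $\ast_f=\tilde{\ast}$ recalled in the preliminaries with $\ast=t$); combined with $\tilde{\ast}\leq\ast_f=\ast\leq t$, this forces $\ast=\tilde{\ast}=w=t$, so $R$ is a P$\ast$MD. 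The Halter-Koch criterion for Clifford $\ast$-regular P$\ast$MDs \cite[Theorems 6.3 and 6.11]{HK} then delivers the $\ast$-finite character. For (iii)$\Rightarrow$(ii), the same criterion yields Clifford $\ast$-regularity; the identity $\ast_f=\tilde{\ast}$ valid in every P$\ast$MD forces $\ast=\tilde{\ast}$, whence Clifford $\tilde{\ast}$-regularity; and integral closedness is automatic because every P$\ast$MD is a P$v$MD. The final assertion $\tilde{\ast}=w$ follows in either direction from Corollary~\ref{ast=w} applied to $\tilde{\ast}$.

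The main difficulty is in part~(1), where one must extract the global structural conclusion P$v$MD from the apparently weaker hypotheses; the resolution rests on the single observation that $\ast$-integral closedness collapses the endomorphism ring $E(I^\ast)$ to $R$ for every finitely generated $I$. Once this is noted, Proposition~\ref{prop1}(2) makes the passage from $\ast$-regularity to $t$-invertibility automatic, and part~(2) reduces to an assembly of previously recalled results on P$\ast$MDs together with Corollary~\ref{ast=w}.
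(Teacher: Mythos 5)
Your proposal is correct and follows essentially the same route as the paper's: part (1) is the paper's argument verbatim (the collapse $E(I^\ast)=(I^\ast:I^\ast)=R$ forced by $\ast$-integral closedness, followed by Proposition \ref{prop1}(2)), and part (2) rests on the same ingredients, namely Lemma 4.13 of \cite{EFP} for (i)$\Leftrightarrow$(ii), the Halter-Koch Theorems 6.3 and 6.11 for the finite character and the converse, and Corollary \ref{ast=w} for the final assertion. The only cosmetic difference is in (ii)$\Rightarrow$(iii), where the paper cites Theorem \ref{wic} directly while you rederive the P$\ast$MD property from part (1) together with the identity $w=t$ in a P$v$MD; both reduce to the same underlying computation.
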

\begin{proof} (1)  Assume that $R$ is $\ast$-integrally closed and let $I$ be a finitely generated ideal of $R$. Then $(I^\ast:I^\ast)=R$ and $I$ is $t$-invertible in $R$  by Proposition \ref{prop1}(2). Thus $R$ is a P$v$MD.

(2) (i) $\ra$ (iii) $R$ is a P$\ast$MD by Theorem \ref{wic}, thus it has $\ast$-finite character by \cite[Theorem 6.11]{HK}.

(iii) $\ra$ (i) is \cite[Theorem 6.3]{HK}.

(i) $\lra$ (ii)  because a domain is integrally closed if and only if it is $\tilde{\ast}$-integrally closed \cite[Lemma 4.13]{EFP}.

To finish, if $R$ is  Clifford $\tilde{\ast}$-regular domain, $\tilde{\ast}=w$ by  Corollary \ref{ast=w}.
\end{proof}

 \begin{cor} \label{corpvmd}  The following conditions are equivalent:
  \begin{itemize}
 \item[(i)] $R$ is pseudo-integrally closed and Clifford $t$-regular;
  \item [(ii)]  $R$ is $w$-integrally closed and Clifford $w$-regular;
   \item[(iii)] $R$ is integrally closed and Clifford $w$-regular;
  \item[(iv)] $R$ is a P$v$MD of $t$-finite character.
\end{itemize}
\end{cor}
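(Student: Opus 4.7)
The plan is to deduce this corollary by specializing Theorem \ref{pvmd} to the star operation $\ast=t$, for which $\tilde{\ast}=w$ and for which a P$\ast$MD is precisely a P$v$MD. With this identification, conditions (ii), (iii) and (iv) of the corollary are exactly conditions (i), (ii), (iii) of Theorem \ref{pvmd}(2) applied to $\ast=t$, so the equivalence (ii) $\Leftrightarrow$ (iii) $\Leftrightarrow$ (iv) comes essentially for free. Thus the real task is to attach condition (i) to this chain of equivalences.

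For (i) $\Rightarrow$ (iv), I would first observe that pseudo-integrally closed means $R=R^{[v]}=R^{[t]}$, that is, $R$ is $\ast$-integrally closed for $\ast=t$. Theorem \ref{pvmd}(1) applied with $\ast=t$ then yields that $R$ is a P$v$MD. In any P$v$MD one has $t=w$ (this is the characterization $\ast_f=\tilde{\ast}$ of a P$\ast$MD recalled in the paragraph preceding Theorem \ref{wic}, specialized to $\ast=t$), so Clifford $t$-regularity coincides with Clifford $w$-regularity. Combined with the fact that any P$v$MD is integrally closed, this gives condition (iii); applying now (iii) $\Rightarrow$ (iv) from Theorem \ref{pvmd}(2) produces the $t$-finite character.

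Conversely, for (iv) $\Rightarrow$ (i), the implication (iv) $\Rightarrow$ (iii) from Theorem \ref{pvmd}(2) yields that $R$ is integrally closed and Clifford $w$-regular, and since $t=w$ in a P$v$MD, Clifford $w$-regularity is the same as Clifford $t$-regularity. Only pseudo-integral closedness remains. For this, I would check directly that any P$v$MD satisfies $R^{[t]}=R$: for each finitely generated ideal $J$ of $R$, the $t$-invertibility relation $(J(R:J))^t=R$ forces $(J^t:J^t)=R$, since for $x\in(J^t:J^t)$ we have
$$xR=x(J^t(R:J^t))^t=(xJ^t(R:J^t))^t\subseteq (J^t(R:J^t))^t=R,$$
so $x\in R$. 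Hence $R^{[t]}=R$, i.e., $R$ is pseudo-integrally closed.

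The main obstacle is not a mathematical one but a matter of bookkeeping: one must carefully track the interplay between $t$ and $w$ via the identity $t=w$ in a P$v$MD in order to translate the $t$-flavored hypotheses of condition (i) into the $w$-flavored form required by Theorem \ref{pvmd}(2). No genuinely new argument is needed beyond what Theorem \ref{pvmd} already supplies.
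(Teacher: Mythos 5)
Your proposal is correct and follows exactly the route the paper intends (the corollary is stated without proof as an immediate specialization of Theorem \ref{pvmd} to $\ast=t$, where $\tilde{t}=w$ and a P$t$MD is a P$v$MD). Your handling of condition (i) — using Theorem \ref{pvmd}(1) to get a P$v$MD, the identity $t=w$ on a P$v$MD to pass between $t$- and $w$-regularity, and the direct verification that $t$-invertibility of finitely generated ideals forces $R^{[t]}=R$ — supplies precisely the bookkeeping the authors leave implicit.
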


We say that a domain $R$ is \emph{strongly $\ast$-discrete} if $P\neq (P^2)^\ast$ for  each $\ast$-prime ideal $P$. Recalling that $R$ is a P$\ast$MD if and only if $R_M$ is a valuation domain for each $M\in \astmax(R)$ \cite[Theorem 3.1]{fjs}, it is easy to check that $R$ is a strongly $\ast$-discrete P$\ast$MD if and only if $R_M$ is a strongly discrete valuation domain for each $M\in \astmax(R)$.

The next theorem implies that in the integrally closed case, for $\ast$ of finite type,  a Clifford $\ast$-regular domain is $\ast$-stable if and only if it is  strongly $\ast$-discrete. For the identity this was proved in \cite[Lemma 3.4]{KM1}.

  \begin{theorem} \label{stablepvmd} Let $\ast$ be a star operation of finite type on a domain $R$. The following statements are equivalent:
 \begin{enumerate}
 \item[(i)] $R$ is $\ast$-integrally  closed and  $\ast$-stable;
\item[(ii)]  $R$ is  $\tilde{\ast}$-integrally closed and $\tilde{\ast}$-stable;
 \item[(iii)]   $R$ is  integrally closed and $\tilde{\ast}$-stable;
  \item [(iv)] $R$ is a $\ast$-stable P$\ast$MD;
   \item[(v)] $R$ is a strongly $\ast$-discrete P$\ast$MD of $\ast$-finite character;
   \item[(vi)]  $R$ is integrally closed, strongly $\ast$-discrete and $\tilde{\ast}$-regular. \end{enumerate}
 Under (any one of) these conditions $\tilde{\ast}=w$.
  \end{theorem}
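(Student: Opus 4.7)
My strategy is to show that each of (i)--(vi) forces $R$ to be a P$\ast$MD. Once this is established, the defining feature $\ast = \ast_f = \tilde{\ast}$ of a P$\ast$MD collapses the distinctions among the various integral closures and the several notions of stability and regularity, so the remaining equivalences become essentially formal. Concretely, I would organize the argument around the core loop (i) $\Leftrightarrow$ (iv) $\Leftrightarrow$ (v), and then peel (ii), (iii), (vi) off of it.

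The decisive step is (i) $\Rightarrow$ (iv). From $R = R^{[\ast]} = \bigcup\{(J^\ast:J^\ast) : J \in \ol{\mc F}(R) \text{ finitely generated}\}$ one reads off that $E(J^\ast) = (J^\ast:J^\ast) = R$ for every finitely generated $J$. Under $\ast$-stability, $J^\ast$ is $\dot{\ast}$-invertible in $E(J^\ast) = R$, i.e., $(J(R:J))^\ast = R$, so every finitely generated ideal of $R$ is $\ast$-invertible and $R$ is a P$\ast$MD; together with $\ast$-stability this yields (iv). The reverse (iv) $\Rightarrow$ (i) is immediate, since a P$\ast$MD is integrally closed with $\ast = \tilde{\ast}$, and integrally closedness coincides with $\tilde{\ast}$- (hence $\ast$-) integral closedness \cite[Lemma 4.13]{EFP}.

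For (iv) $\Leftrightarrow$ (v): Proposition \ref{prop1}(1) promotes $\ast$-stability to Clifford $\ast$-regularity; Theorem \ref{pvmd} then furnishes the $\ast$-finite character; and Corollary \ref{locw3}, combined with the fact that a stable valuation domain is strongly discrete \cite[Proposition 5.3.8]{FHP}, yields strong $\ast$-discreteness via the elementary equivalence recorded just before the statement. Conversely, under (v) the same equivalence makes each $R_M$ a strongly discrete, hence stable, valuation domain, and \cite[Theorem 1.9]{GP} then produces $\ast$-stability of $R$ from $\ast$-local stability plus $\ast$-finite character. The remaining equivalences are short: (ii) $\Leftrightarrow$ (iii) is \cite[Lemma 4.13]{EFP}; (iii) $\Rightarrow$ (iv) is the (i) $\Rightarrow$ (iv) argument applied verbatim with $\tilde{\ast}$ in place of $\ast$, together with the remark that a P$\tilde{\ast}$MD is automatically a P$\ast$MD (pushing an identity $(I(R:I))^{\tilde{\ast}} = R$ up to $\ast$-closure); and (v) $\Leftrightarrow$ (vi) follows from Theorem \ref{pvmd}(2) by appending ``strongly $\ast$-discrete'' on both sides. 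The concluding equality $\tilde{\ast} = w$ is then Corollary \ref{ast=w} applied to the spectral finite-type operation $\tilde{\ast}$, which is Clifford regular by (vi). I expect the main obstacle to be precisely the step (i) $\Rightarrow$ (iv): one must use $\ast$-integral closedness in exactly the right way to identify $E(J^\ast)$ with $R$ and so convert $\ast$-stability of finitely generated ideals into $\ast$-invertibility; every other implication is then bookkeeping or a direct appeal to Theorem \ref{pvmd}.
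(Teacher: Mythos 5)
Your proposal is correct and follows essentially the same route as the paper: the same cycle of implications, with (ii) $\Leftrightarrow$ (iii) via \cite[Lemma 4.13]{EFP}, (iv) $\Leftrightarrow$ (v) via \cite[Theorem 1.9]{GP} and the stable/strongly discrete dichotomy for valuation domains, and (v) $\Leftrightarrow$ (vi) via Theorem \ref{pvmd}(2). The only differences are cosmetic: you prove the core equivalence (i) $\Leftrightarrow$ (iv) directly from $E(J^\ast)=R$ for finitely generated $J$ where the paper simply cites \cite[Corollary 2.4]{GP}, and you obtain the $\ast$-finite character in (iv) $\Rightarrow$ (v) through Clifford $\ast$-regularity and Theorem \ref{pvmd} rather than reading it off \cite[Theorem 1.9]{GP} -- both variants are sound.
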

 \begin{proof}
 (i) $\lra$ (iv) $\lra$ (ii) by \cite[Corollary 2.4]{GP}.

  (ii) $\lra$ (iii) by \cite[Lemma 4.13]{EFP}.

(iv) $\ra$ (v) Since $\ast=\tilde{\ast}$, $R$ has $\ast$-finite character and $R_M$ is a stable valuation domain for each $M\in \astmax(R)$ \cite[Theorem 1.9]{GP}.
Hence $R_M$ is strongly discrete \cite[Proposition 4.1]{O3}.

(v) $\ra$ (iv) Since a strongly discrete valuation domain is stable \cite[Proposition 4.1]{O3}, $R$ is $\ast$-locally stable with $\ast$-finite character. Since $\ast=\tilde{\ast}$, $R$ is $\ast$-stable by \cite[Theorem 1.9]{GP}.

(v) $\lra$ (vi) by Theorem \ref{pvmd}(2).

\smallskip
To conclude, if $R$ is $\ast$-stable, then $\tilde{\ast}=w$ by  \cite[Corollary 1.6]{GP}.
\end{proof}

\begin{cor} The following statements are equivalent for a domain $R$:
 \begin{enumerate}
 \item[(i)] $R$ is completely integrally closed and  $w$-stable;
\item[(ii)]  $R$ is a Krull domain;
 \item[(iii)]   $R$ is  completely integrally closed strongly $t$-discrete and  and $w$-regular.
 \end{enumerate}
\end{cor}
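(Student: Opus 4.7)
The plan is to combine Theorem \ref{stablepvmd} (applied with $\ast=t$, so that $\tilde{\ast}=w$) with the well-known characterization of Krull domains as those domains in which every nonzero $t$-ideal is $t$-invertible (equivalently, completely integrally closed Mori domains, equivalently P$v$MDs which are Mori).

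First, (i) $\Leftrightarrow$ (iii) will be immediate from Theorem \ref{stablepvmd}: both conditions demand that $R$ be completely integrally closed (and so in particular integrally closed), and the remaining equivalence ``$w$-stable'' $\Leftrightarrow$ ``strongly $t$-discrete and $w$-regular'' (under the integrally closed hypothesis) is precisely (iii) $\Leftrightarrow$ (vi) of that theorem specialized to $\ast=t$.

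The main substantive step will be the identity $(J:J)=R$ for every $t$-ideal $J$ of a completely integrally closed domain $R$. I plan to verify it as follows: scaling so that $J\sub R$, any $x\in(J:J)$ satisfies $x^nJ\sub J\sub R$ for all $n\ge 1$, so fixing a nonzero $d\in J$ the elements $dx^n$ all lie in $R$, and complete integral closure forces $x\in R$. (This in turn gives $R^{[t]}=R$, so the hypothesis ``$t$-integrally closed'' in Theorem \ref{stablepvmd} will be available for free whenever $R$ is completely integrally closed.)

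With this identity in hand, for (i) $\Rightarrow$ (ii) I will apply Theorem \ref{stablepvmd} to obtain that $R$ is $t$-stable, so every $t$-ideal $J$ is $\dot t$-invertible in $E(J)=R$ and hence $t$-invertible in $R$; by the characterization above, $R$ is Krull. Conversely, for (ii) $\Rightarrow$ (i), I will use that a Krull domain is completely integrally closed and that every $t$-ideal of a Krull domain is both $t$-finite (by Mori) and $t$-invertible (by P$v$MD); combined with $(J:J)=R$, this $t$-invertibility in $R$ coincides with $\dot t$-invertibility in $E(J)$, giving $t$-stability, and Theorem \ref{stablepvmd} then upgrades this to $w$-stability, yielding (i). The only nontrivial ingredient outside Theorem \ref{stablepvmd} is the collapse $(J:J)=R$ under complete integral closure; I expect this to be the crux, though the verification itself is a direct application of the definition of complete integral closure.
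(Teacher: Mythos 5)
Your proposal is correct. For (i) $\Leftrightarrow$ (iii) you do exactly what the paper does: specialize Theorem \ref{stablepvmd} to $\ast=t$ and note that complete integral closure supplies the ``integrally closed'' hypothesis common to conditions (iii) and (vi) there. For (i) $\Leftrightarrow$ (ii), however, the paper simply cites an external result (\cite[Corollary 2.5]{GP}), whereas you give a self-contained argument. Your key observation --- that complete integral closure forces $(J:J)=R$ for every ideal $J$, via $dx^n\in R$ for all $n$ --- is correct and is indeed the crux: it collapses $E(J^t)$ to $R$, so that $\dot{t}$-invertibility in the endomorphism ring becomes ordinary $t$-invertibility in $R$, and it also yields $R^{[t]}=R$ so that condition (i) of Theorem \ref{stablepvmd} is available in the converse direction. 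Combined with Kang's characterization of Krull domains as those in which every nonzero ideal is $t$-invertible, this gives both implications. What the paper's citation buys is brevity; what your argument buys is transparency about why complete integral closure is the right hypothesis (it is precisely what trivializes the endomorphism rings, turning $t$-stability into $t$-invertibility of all $t$-ideals). No gaps.
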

 \begin{proof}
 (i) $\lra$ (ii) is \cite[Corollary 2.5]{GP}.

 (i) $\lra$ (iii) by Theorem  \ref{stablepvmd}.
 \end{proof}

 We now show that, for $\ast=\tilde{\ast}$, in $\ast$-dimension one $\ast$-regularity and $\ast$-stability are equivalent if and only if $R^{[\ast]}$ is a Krull domain.

\begin{theorem} \label{tdimone} Let $\ast=\tilde{\ast}$ be a star operation on $R$ spectral and of finite type. The following conditions are equivalent:
  \begin{itemize}
  \item [(i)] $R$ is Clifford $\ast$-regular and $R^{[\ast]}$ is a Krull domain;
  \item[(ii)] $R$ is $\ast$-stable of $\ast$-dimension one.
\end{itemize}
Under (any one of) these conditions, $\ast=w$ and $R^{[\ast]}=R^{[w]}=\widetilde{R}$ is the complete integral closure of $R$.
\end{theorem}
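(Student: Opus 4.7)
The plan is to identify $(R^{[\ast]})_{R\setminus M}$ with $(R_M)'$ for every $M \in \astmax(R)$, from which both implications follow via a localization-plus-idempotent-in-a-PID argument. Both (i) and (ii) yield Clifford $\ast$-regularity (via Proposition \ref{prop1}(1) in (ii)), so Corollary \ref{ast=w} gives $\ast = w$. For any finitely generated $J \sub R$, Remark \ref{rem1}(2) yields $(J^\ast : J^\ast) = (J:J)^\ast$; since $M$ is $\ast$-maximal, localization trivializes the $\ast$-closure and colon commutes with localization for finitely generated $J$, giving $((J^\ast : J^\ast))_{R \setminus M} = (JR_M : JR_M)$. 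Taking the union over $J$ yields $(R^{[\ast]})_{R \setminus M} = \bigcup_J (JR_M : JR_M) = (R_M)^{[d]} = (R_M)'$.

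For (ii)$\Rightarrow$(i), Theorem \ref{wic} makes $R^{[\ast]}$ a P$\dot\ast$MD with $\dot\ast = t_{R^{[\ast]}}$, and Corollary \ref{lemmatdimone} gives $\dot\ast$-$\dim R^{[\ast]} = 1$. By \cite[Theorem 1.9]{GP}, $R$ has $\ast$-finite character and each $R_M$ is stable; hence $(R_M)'$ is a stable Pr\"ufer overring (\cite[Theorem 5.1]{O2} together with \cite[Proposition 2.1]{Rush}), strongly discrete (\cite[Proposition 5.3.8]{FHP}) and of finite character (Theorem \ref{Olb}), so a one-dimensional Dedekind domain with DVR localizations. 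Through the identification, every $\dot\ast$-maximal localization of $R^{[\ast]}$ is a DVR. For $\dot\ast$-finite character, any $x = p/q \in R^{[\ast]}$ with $p,q \in R$ lies in only finitely many $\dot\ast$-maximals $N$: by $\ast$-GU and $\ast$-$\dim R = 1$, $N \cap R = M$ must be a $w$-maximal containing $p$ (finitely many by $\ast$-finite character of $R$), and for each such $M$ only finitely many maximals of the Dedekind domain $(R_M)'$ contain $x$. Thus $R^{[\ast]}$ is Krull.

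For (i)$\Rightarrow$(ii), Krull forces $\dot\ast$-$\dim R^{[\ast]} = 1$, hence $\ast$-$\dim R = 1$ by Corollary \ref{lemmatdimone}. By Corollary \ref{locw3} it suffices to show $R_M$ is stable for each $M \in \astmax(R)$. Fix $M$ and an ideal $I$ of $R_M$, Clifford regular by Corollary \ref{locw2}. Set $E = (I:I)$ and $T = I(E:I)$; by Lemma \ref{lemmaX}(2), $T^2 = T$ in $E$, so the extension $TR_M'$ is idempotent in $R_M' := (R^{[\ast]})_{R \setminus M}$. Now $R_M'$ is the finite intersection of the DVRs $R^{[\ast]}_N$ over the $\dot\ast$-maximals $N$ of $R^{[\ast]}$ containing $M$ --- finitely many, because each contains any fixed nonzero $y \in M$ and $R^{[\ast]}$ has $\dot\ast$-finite character; hence $R_M'$ is a semilocal Dedekind domain, i.e.\ a PID. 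Idempotent ideals in a PID are trivial, and $TR_M' \supseteq IR_M' \neq 0$, so $TR_M' = R_M'$. If $T \subsetneq E$, a maximal ideal $P \supseteq T$ of $E$ would lift, by lying-over for the integral extension $E \sub R_M'$, to a prime $P'$ of $R_M'$ with $TR_M' \sub P' \subsetneq R_M'$, a contradiction. Therefore $T = E$, so $I$ is invertible in $E$, i.e.\ stable.

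The final assertion $\ast = w$ is Corollary \ref{ast=w}, and $R^{[\ast]} = \widetilde R$ because the Krull domain $R^{[\ast]}$ is completely integrally closed and contains $R$, so every element almost integral over $R$ is also almost integral over $R^{[\ast]}$ and hence lies in $R^{[\ast]}$, giving $\widetilde R \sub R^{[\ast]}$; the reverse inclusion was noted in Section 4. The main technical obstacle is the reduction from a global statement about $R$ (respectively $R^{[\ast]}$) to the semilocal-Dedekind structure of $(R_M)'$, particularly the lying-over step in the PID argument for (i)$\Rightarrow$(ii) and the careful propagation of $\ast$-finite character from $R$ to $R^{[\ast]}$ through the (possibly infinitely-max) Dedekind $(R_M)'$ in (ii)$\Rightarrow$(i).
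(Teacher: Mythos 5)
Your proof takes a genuinely different route from the paper's. The paper argues globally: for (i)$\Rightarrow$(ii) it passes the idempotent trace $T=T(I^\ast)$ up to $D:=R^{[\ast]}$, uses $t_D$-invertibility in the Krull domain $D$ to get $(TD)^{t_D}=D$, and pulls this back to $E=E(I^\ast)$ via $E^{[\ast_{\vert_E}]}=D$ together with $\ast$-LO (Theorem \ref{*LO}); for (ii)$\Rightarrow$(i) it simply quotes \cite[Corollary 2.6 and Theorem 2.9]{GP} to see that $D$ is a strongly $t$-discrete P$v$MD with $t$-finite character, which in $t$-dimension one is Krull. You instead localize at each $M\in\astmax(R)$, identify $(R^{[\ast]})_{R\setminus M}$ with $(R_M)'$ (an identification the paper also makes, by a slightly different argument, in the proposition following Theorem \ref{FC}), and run both implications through the semilocal Dedekind structure of $(R_M)'$. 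This is more self-contained on the (ii)$\Rightarrow$(i) side, at the cost of more bookkeeping; the identification, the contraction of $\dot\ast$-primes to $\ast$-primes, and the counting of $\dot\ast$-maximals over a fixed $M$ are all handled correctly.

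There is, however, one step in your (i)$\Rightarrow$(ii) that is asserted rather than proved: the containment $E=(I:I)\subseteq R_M'$, which you need in order to invoke lying over for ``the integral extension $E\subseteq R_M'$''. For a non-finitely-generated ideal $I$, $(I:I)$ need not be integral over the base ring (in a valuation domain of dimension at least two, $(P:P)=V_P$ for a nonmaximal prime $P$ is a non-integral overring), so $E\subseteq(R_M)'$ is not formal. It is true in your setting, but only because of structure you have already established: $(I:I)$ always lies in the complete integral closure $\widetilde{R_M}$, and since $R_M'$ is a PID it is completely integrally closed, whence $\widetilde{R_M}\subseteq\widetilde{R_M'}=R_M'$. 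This is precisely the point the paper handles globally by writing $R\subseteq E\subseteq\widetilde{D}=D$. Once you insert this observation (after showing $R_M'$ is a semilocal PID and before the lying-over step), the extension $E\subseteq R_M'$ is indeed integral (being integral over the smaller ring $R_M$) and your argument goes through; the remaining assertions, including $R^{[\ast]}=\widetilde{R}$, match the paper's.
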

\begin{proof} Since a Krull domain is completely integrally closed, if  $R^{[\ast]}$ is  Krull, we have $\widetilde{R^{[\ast]}}=R^{[\ast]}$. Hence, from $R\sub R^{[\ast]}\sub\widetilde{R}$, we obtain $\widetilde{R}\sub \widetilde{R^{[\ast]}}= R^{[\ast]}\sub \widetilde{R}$ and $R^{[\ast]}= \widetilde{R}$.

(i) $\ra$ (ii)
Assume that $R$ is Clifford $\ast$-regular. Given a nonzero ideal $I\sub R$, set  $E:=E(I^\ast)$ and $T:= T(I^\ast)$. We have to prove that $T^\ast=E$.

Set $D:=R^{[\ast]}$ and denote by $\ast_{\vert_E}$ and $\ast_{\vert_D}$ the operations induced respectively on $E$ and $D$ by the star operation $\ast$ of $R$. Note that $\ast_{\vert_D}=t_D$ is the $t$-operation on $D$ by Theorem \ref{wic}.

By Lemma \ref{lemmaX}, we have $T^\ast=(T^2)^\ast$.  Whence, $(TD)^{t_D}=(T^2D)^{t_D}$.
Since $D$ is Krull, $TD$ is $t_D$-invertible and so $(TD)^{t_D}=D$.

Now  $R\subseteq E \subseteq  \widetilde{D}=D$ implies $D:=R^{[\ast]} \subseteq E^{[\ast_{\vert_E}]} \subseteq D^{[\ast\vert_D]} = D$. Hence $E^{[\ast_{\vert_E}]} = D$.

If $T^\ast \neq E$, there exists a $\ast_{\vert_E}$-maximal ideal of $E$ such that $T \subseteq N$. Now, since $E^{[\ast_{\vert_E}]} = D$, by Proposition \ref{*LO}, there exists  a  $\ast_{\vert_D}$-prime ideal $M$ of $D$ lying over $N$. Thus $T \subseteq N \subseteq M$ implies $(TD)^{\ast_{\vert_D}} \subseteq M ^{ \ast_{\vert_D}}= M \subsetneq D$, a contradiction. Hence $T^\ast = E$.

To finish, since $\ast_{\vert_D}=t_D$ on $D$ and since $t_D$-$\dim(D)=1$,  then $R$ has $\ast$-dimension one  by Corollary \ref{lemmatdimone}.

(ii) $\ra$ (i) If $R$ is $\ast$-stable, $\ast=w$ by \cite[Corollary 1.6]{GP}.  Now recall that if $R$ is $w$-stable, $D:=R^{[w]}$ is a $w_D$-stable overring of $R$ \cite[Corollary 2.6]{GP}, hence $R$ is a strongly $t$-discrete P$v$MD with $t$-finite character \cite[Theorem 2.9]{GP}. Since $D$ has $\dot{\ast}$-dimension one,  by Corollary \ref{lemmatdimone},  $D$ has $t$-dimension one and so it is a Krull domain. Finally $\ast$-stability implies $\ast$-regularity by Proposition \ref{prop1}(1).
\end{proof}

When $\ast=d$ is the identity, we obtain the following corollary.

\begin{cor} The following conditions are equivalent:
  \begin{itemize}
  \item [(i)] $R$ is Clifford regular and $R^\prime$ is a Dedekind domain;
  \item[(ii)] $R$ is stable of dimension one.
\end{itemize}
\end{cor}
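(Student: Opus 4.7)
The plan is to derive this corollary as the direct specialization of Theorem \ref{tdimone} to the identity star operation $\ast = d$, with only a small translation between ``Krull'' and ``Dedekind'' to carry out. Since $d$ is spectral and of finite type (indeed $d=\tilde d$), the hypothesis $\ast=\tilde\ast$ is trivially satisfied. Under this choice we have $R^{[d]}=R'$, Clifford $d$-regular means Clifford regular, $d$-stable means stable, and $d$-dimension equals Krull dimension, so Theorem \ref{tdimone} reads precisely: $R$ is Clifford regular and $R'$ is a Krull domain if and only if $R$ is stable of dimension one.

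It remains to check that, in the present context, ``$R'$ is Krull'' can be replaced by ``$R'$ is Dedekind''. In either direction of the equivalence, $R$ is at least finitely stable---directly from stability via Proposition \ref{prop1}(1), or from Clifford regularity by \cite[Proposition 2.3]{B3} (as recalled in the paragraph preceding Theorem \ref{wic})---and hence $R'$ is a Pr\"ufer domain by \cite[Proposition 2.1]{Rush} (equivalently, by Theorem \ref{wic} applied to $d$). Since a Pr\"ufer Krull domain is Dedekind and a Dedekind domain is certainly Krull, the two formulations are interchangeable under either hypothesis, and the corollary follows. Concretely, (i)$\Rightarrow$(ii) plugs Dedekind $\Rightarrow$ Krull into Theorem \ref{tdimone}, while (ii)$\Rightarrow$(i) uses Theorem \ref{tdimone} to get $R'$ Krull and then upgrades to Dedekind via Pr\"uferness.

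The only genuinely non-formal step is the assertion that a Pr\"ufer Krull domain is Dedekind, which I expect to be the main (and only mild) obstacle. This should be routine: Krullness forces $t$-dimension one and finite character at height-one primes, while in a Pr\"ufer domain every maximal ideal is a $t$-ideal; together these force $R'$ to be one-dimensional with finite character, each localization $R'_M$ being a valuation domain that is simultaneously a Krull overring, hence a DVR. A one-dimensional integrally closed domain with finite character and DVR localizations is Dedekind, which finishes the argument.
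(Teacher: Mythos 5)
Your proof is correct and follows exactly the paper's route: the corollary is presented there as the immediate specialization of Theorem \ref{tdimone} to $\ast=d$, with no further argument written out. The one detail the paper leaves implicit --- translating ``$R^\prime$ is Krull'' into ``$R^\prime$ is Dedekind'' --- is handled soundly in your proposal (via Pr\"uferness of the integral closure of a finitely stable domain; one could equally note that $R^\prime$ is one-dimensional by Corollary \ref{lemmatdimone}, and a one-dimensional Krull domain is Dedekind).
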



\section{The $\ast$-finite character}

 Olberding proved that a domain $R$ is stable if and only if it is locally stable and has finite character \cite[Theorem 3.3]{O2}. This result, for a star operation $\ast=\tilde{\ast}$ spectral and of finite type,  was extended by the authors to star stability in \cite[Theorem 1.9]{GP}.
The finite character of Clifford regular domains was proved by Bazzoni in \cite[Theorem 4.7]{B4}. With a more direct argument, in the next theorem we prove that, for $\ast=\tilde{\ast}$, Clifford  $\ast$-regular domains have the $\ast$-finite character.

We will use the fact that, for $\ast$ of finite type,  a Clifford $\ast$-regular domain has the $\ast$-local $\ast$-invertibility property  (i.e., each ideal $I$ such that $I^\ast R_M$ is principal for all $M\in \astmax(R)$ is $\ast$-invertible) \cite[Lemma 4.4]{HK} and  the following results from \cite{FPT} (or from \cite{ZD}).

\begin{theorem} \label{teoFPT} Let $\ast$ be a star operation of finite type on $R$. Then:
\begin{enumerate}
\item[(1)]  (\cite[Proposition 1.6]{FPT} or \cite[Corollary 4]{ZD}) $R$ has the $\ast$-finite character if and only if, for each nonzero $x \in R$, any family of pairwise $\ast$-comaximal $\ast$-finite $\ast$-ideals containing $x$ is finite.
\item[(2)]Ê\cite[Proposition 2.1]{FPT}  If $R$ has the $\ast$-local $\ast$-invertibility property, for each nonzero $x \in R$, any family of pairwise $\ast$-comaximal $\ast$-invertible $\ast$-ideals containing $x$ is finite.
\end{enumerate}
\end{theorem}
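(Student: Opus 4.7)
The plan is to handle the two parts separately: part (1) is an equivalence whose forward direction is a pigeonhole and whose converse is a prime-avoidance construction, while part (2) requires building an ideal which violates the $\ast$-local $\ast$-invertibility property.

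For part (1), the forward implication is immediate: if $R$ has the $\ast$-finite character and $x\neq 0$, only finitely many $\ast$-maximals $M_1,\dots,M_r$ contain $x$; any proper $\ast$-ideal containing $x$ lies in some $M_i$ (since $\ast$ is of finite type), and two members of a pairwise $\ast$-comaximal family cannot sit in the same $M_i$ (their sum's $\ast$-closure would lie in $M_i\subsetneq R$), giving at most $r$ members. I would prove the converse by contrapositive. Assuming some $x_0$ lies in an infinite set $\{M_\lam\}_{\lam\in\La}$ of $\ast$-maximals, Zorn's lemma yields a maximal pairwise $\ast$-comaximal family $\F$ of $\ast$-finite $\ast$-ideals containing $x_0$; to see $\F$ is infinite, I would suppose $\F=\{J_1,\dots,J_r\}$ and use that each $J_i$ is $\ast$-finite---so the $\ast$-maximals containing $J_i$ are cut out by a finite membership condition on explicit generators---to apply an iterative prime-avoidance argument: there must exist a $\ast$-maximal $N\in\{M_\lam\}$ and an element $y\in N$ so that $(x_0,y)^\ast$ contains $x_0$, is $\ast$-finite, and is $\ast$-comaximal with every $J_i$, contradicting maximality of $\F$.

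For part (2) I would argue by contrapositive as well. Assume a nonzero $x\in R$ together with an infinite family $\{I_n\}_{n\ge 1}$ of pairwise $\ast$-comaximal $\ast$-invertible $\ast$-ideals through $x$. Pairwise $\ast$-comaximality makes the sets $\cS_n:=\{M\in \astmax(R): I_n\sub M\}$ pairwise disjoint, so each $\ast$-maximal meets at most one $I_n$. Using the principality of $I_nR_M$ at every $M\in\cS_n$ (from $\ast$-invertibility), I would piece together an ideal $J\sub R$ whose $\ast$-closure is $\ast$-locally principal at every $\ast$-maximal of $R$ by patching local generators coming from the $I_n$'s. The $\ast$-local $\ast$-invertibility property then forces $J$ to be $\ast$-invertible, hence $\ast$-finite; but $\ast$-finiteness clashes with the fact that the local data encoded in $J$ genuinely vary over infinitely many pairwise disjoint supports $\cS_n$, producing the desired contradiction.

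The main obstacle is the construction of $J$ in part (2): choosing the local generators of the $I_n$'s so that they glue into a coherent $\ast$-locally principal ideal of $R$ requires exploiting $\ast$-invertibility in a uniform way, and then the incompatibility of such $J$ with $\ast$-finiteness rests on a careful analysis of the supports of $\ast$-finite ideals contrasted with the infinitude of $\{\cS_n\}$. The secondary difficulty in part (1) is the iterative prime avoidance, which crucially relies on $\ast$ being of finite type so that the earlier generators remain a finite obstruction at each inductive step; the argument cannot avoid infinitely many $\ast$-maximals simultaneously.
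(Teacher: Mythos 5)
First, a point of reference: the paper does not prove this theorem at all --- both parts are imported verbatim from \cite[Proposition 1.6]{FPT}, \cite[Corollary 4]{ZD} and \cite[Proposition 2.1]{FPT}, so there is no internal proof to compare against, and your attempt must be judged against the cited arguments. Judged so, there is a fatal gap in your converse direction of part (1). You take, by Zorn's lemma, a maximal pairwise $\ast$-comaximal family $\F$ of proper $\ast$-finite $\ast$-ideals containing $x_0$ and try to contradict $\F=\{J_1,\dots,J_r\}$ by adjoining some $(x_0,y)^\ast$. But maximal such families can be finite for trivial reasons: the singleton $\F=\{x_0R\}$ is already maximal, because every proper $\ast$-ideal $J$ containing $x_0$ satisfies $(J+x_0R)^\ast=J^\ast=J\subsetneq R$, so nothing whatsoever can be adjoined to it; in particular no element $y$ with the properties you require exists, and no prime-avoidance argument can produce one. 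Maximality of $\F$ therefore carries no information. What is actually needed is an existence proof of an \emph{infinite} $\ast$-comaximal family, and the engine for that in \cite{ZD} and \cite{FPT} is a splitting lemma (a proper $\ast$-finite ideal contained in infinitely many $\ast$-maximal ideals is contained in two $\ast$-comaximal proper $\ast$-finite ideals, each again contained in infinitely many $\ast$-maximal ideals), iterated along an infinite binary tree to extract the desired antichain through $x_0$. That lemma is the real content of part (1) and is absent from your sketch; the forward implication, by contrast, is the easy pigeonhole you describe.

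For part (2) your strategy (contrapositive, build a $\ast$-locally principal ideal that cannot be $\ast$-finite) is the right one, but the decisive construction is precisely the step you defer. The standard choice is $J:=\sum_{n\ge 1}x(R:I_n)$: since $x\in I_n$ one has $x(R:I_n)\subseteq R$ and $x\in x(R:I_n)$, so $J$ is an integral ideal containing $x$; pairwise $\ast$-comaximality puts at most one $I_n$ inside a given $M\in\astmax(R)$, whence $JR_M=xR_M$ when no $I_n\subseteq M$, and $JR_M=x(R_M:I_mR_M)$ --- principal because $I_mR_M$ is principal by $\ast$-invertibility --- when $I_m\subseteq M$. The hypothesis then forces $J$ to be $\ast$-invertible, hence $\ast$-finite, so $J^\ast=F^\ast$ with $F$ finitely generated and $F\subseteq\sum_{n\le k}x(R:I_n)$ for some $k$; comparing $J$ with this finite subsum at a $\ast$-maximal ideal containing $I_m$ for some $m>k$ yields the contradiction. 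Without this (or an equivalent) explicit $J$, the ``patching of local generators'' you invoke is a restatement of the difficulty rather than a proof.
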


\begin{theorem} \label{FC}  Let $\ast=\tilde{\ast}$ be a star operation on $R$ spectral and of finite type.
If $R$ is Clifford $\ast$-regular, then $R$ has the $\ast$-finite character.
\end{theorem}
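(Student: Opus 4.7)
The plan is to argue by contradiction, combining the two parts of Theorem \ref{teoFPT}. Assuming $R$ is Clifford $\ast$-regular but fails to have the $\ast$-finite character, Theorem \ref{teoFPT}(1) produces a nonzero $x\in R$ together with an infinite family $\{I_\lambda\}_{\lambda\in\Lambda}$ of pairwise $\ast$-comaximal $\ast$-finite $\ast$-ideals all containing $x$. The strategy is to convert this family into an infinite family of pairwise $\ast$-comaximal $\ast$-invertible $\ast$-ideals of $R$ still sharing a common nonzero element, which will contradict Theorem \ref{teoFPT}(2); this conclusion is applicable because Clifford $\ast$-regularity yields the $\ast$-local $\ast$-invertibility property by \cite[Lemma 4.4]{HK}.

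The first step is immediate: each $I_\lambda$ is $\ast$-regular (since $R$ is Clifford $\ast$-regular) and $\ast$-finite, so by Proposition \ref{fingen} it is $\ast$-stable. Writing $E_\lambda:=E(I_\lambda^\ast)$ and $T_\lambda:=T(I_\lambda^\ast)=I_\lambda(I_\lambda^\ast:I_\lambda^2)$, Lemma \ref{lemmaX} together with $\ast$-stability gives $T_\lambda^\ast=E_\lambda$ and the $\dot{\ast}$-invertibility of $I_\lambda^\ast$ in $E_\lambda$. The crux of the proof is now to extract, from this stability data, a $\ast$-invertible companion ideal $J_\lambda$ of $R$ containing $x$ (or some fixed power of $x$). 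A natural candidate is $J_\lambda := (xT_\lambda)^\ast\cap R$, or the contraction to $R$ of a similar product built from $I_\lambda$, $(E_\lambda:I_\lambda^\ast)$, and the factor $x$. One would then verify that $J_\lambda$ is $\ast$-invertible in $R$ (harnessing $T_\lambda^\ast=E_\lambda\supseteq R$ to force $(J_\lambda(R:J_\lambda))^\ast=R$), that $x\in J_\lambda$, and that the pairwise $\ast$-comaximality of $\{I_\lambda\}$ transfers to $\{J_\lambda\}$ — for any $\ast$-maximal $M$ of $R$ containing two companions $J_\lambda, J_\mu$ would, upon localization, dominate both $I_\lambda$ and $I_\mu$, violating their comaximality. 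Applying Theorem \ref{teoFPT}(2) to $\{J_\lambda\}$ then gives the required contradiction.

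The principal obstacle is this last construction: turning $\dot{\ast}$-invertibility in the overring $E_\lambda$ into genuine $\ast$-invertibility in $R$. Since $E_\lambda$ can be strictly larger than $R$, a naive contraction to $R$ need not be $\ast$-invertible, and the candidate $J_\lambda$ must be chosen carefully so that multiplication by $(R:J_\lambda)$ fills up to $R$ in the $\ast$-closure. I expect this to require a local argument at each $M\in\astmax(R)$, exploiting that $R_M$ is itself Clifford regular by Corollary \ref{locw2} and hence imposes strong structural control on $\ast$-finite ideals localized at $M$. A secondary technicality is to confirm that the comaximality of the $I_\lambda$'s genuinely descends to the $J_\lambda$'s after the extraction of the common factor $x$; this should follow from the fact that $x\in I_\lambda\cap I_\mu$ is consistent with $(I_\lambda+I_\mu)^\ast=R$ and that the contracted $J_\lambda$'s retain the skeleton of the $I_\lambda$'s with respect to the $\ast$-maximal spectrum.
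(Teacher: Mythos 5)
Your setup is the right one --- Theorem \ref{teoFPT}(1) to produce the infinite pairwise $\ast$-comaximal family $\{I_\lambda\}$ of $\ast$-finite $\ast$-ideals containing $x$, Proposition \ref{fingen} to make each $I_\lambda$ $\ast$-stable, and \cite[Lemma 4.4]{HK} plus Theorem \ref{teoFPT}(2) to close the contradiction --- but the proof has a genuine gap exactly where you flag the ``principal obstacle.'' Everything hinges on producing a $\ast$-invertible ideal $J_\lambda$ \emph{of $R$}, and this is never constructed: the candidate $J_\lambda=(xT_\lambda)^\ast\cap R$ equals $xE_\lambda\cap R$ (since $T_\lambda^\ast=E_\lambda$), and there is no reason for the contraction to $R$ of a principal ideal of the overring $E_\lambda$ to be $\ast$-invertible in $R$; indeed, the whole difficulty with Clifford regular (as opposed to stable) domains is that $E(I_\lambda^\ast)$ may be strictly larger than $R$, so $\dot{\ast}$-invertibility upstairs does not descend. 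Your own text concedes this (``I expect this to require a local argument''), so the argument as written is a plan, not a proof.

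The paper resolves this by going in the opposite direction: instead of descending to $R$, it ascends to a \emph{single common} overring. One sets $D:=\sum_\alpha E_\alpha$ and shows that $E:=D^\ast$ is a fractional overring of $R$; the nonobvious point is closure under multiplication, which is proved via the identity $E_\beta E_\gamma\subseteq(I_\beta I_\gamma:I_\beta I_\gamma)$ together with the $\ast$-comaximality $(I_\beta+I_\gamma)^\ast=R$, giving $aR=a(I_\beta+I_\gamma)^\ast\subseteq(E_\beta+E_\gamma)^\ast$ for any $a$ multiplying $I_\beta I_\gamma$ into itself. Since $E$ is a $(\ast,\dot{\ast})$-compatible fractional overring, Proposition \ref{overrings1} makes it Clifford $\dot{\ast}$-regular, hence it has the $\dot{\ast}$-local $\dot{\ast}$-invertibility property; each $(I_\alpha E)^{\dot{\ast}}$ is $\dot{\ast}$-invertible in $E$ because $E\supseteq E_\alpha$, and Theorem \ref{*LO} (applied to $R\subseteq E\subseteq R^{[\ast]}$) guarantees that these ideals are proper and remain pairwise $\dot{\ast}$-comaximal. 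Theorem \ref{teoFPT}(2) is then applied \emph{in $E$}, not in $R$. So the missing ideas in your proposal are: (a) working in one overring containing all the $E_\lambda$ rather than trying to descend each $I_\lambda$ to $R$; (b) the comaximality trick showing $\sum_\alpha E_\alpha$ is multiplicatively closed up to $\ast$; and (c) the $\ast$LO argument keeping the extended ideals proper and comaximal.
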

\begin{proof} Assume that $R$ does not have the $\ast$-finite character. Then there exists a  nonzero element $x\in R$ which is contained in infinitely many pairwise $\ast$-comaximal $\ast$-finite $\ast$-ideals $I_\al$ (Theorem \ref{teoFPT}(1)). Set $E_\al:=(I_\al:I_\al)$ and consider the $R$-module $D:=\sum_\al E_\al$. We claim that $E:=D^\ast$ is a fractional overring of $R$. In fact, since $x\in I_\al$ for each $\al$, $xD\sub \sum_\al I_\al\sub R$ and so $D$ is a fractional ideal of $R$. In addition, $E$ is closed under multiplication. Indeed,
let $a_1$, $a_2\in E$. Since $\ast$ is of finite type, there are two finitely generated ideals $F_1$ and $F_2$ contained in $D$ such that $a_1\in F_1^\ast$ and $a_2\in F_2^\ast$. So we can assume that $a_1$, $a_2\in G:=(E_{\al_1}+\dots+E_{\al_n})^\ast$, for some distinct overrings $E_{\al_1},\dots, E_{\al_n}$ of $R$.  We want to show that $(G^2)^\ast=(\sum E_{\al_i}E_{\al_j})^\ast\sub E$. For this, we note that $E_{\al_i}^2\sub E_{\al_i}$ while, for $i\neq j$,  $E_{\al_i} E_{\al_j}\sub (E_{\al_i}+E_{\al_j})^\ast$.
 In fact, for any two fixed indexes $\be$ and $\gamma$, we have $E_{\be} E_{\gamma}\sub (I_\be I_{\gamma}:I_{\be} I_{\gamma})$.
  Now,  let $a\in K$ be such that $aI_\beta I_\gamma\sub I_\beta I_\gamma$. Then $aI_\beta\sub (I_\beta I_\gamma:I_\gamma)$ and $aI_\gamma\sub (I_\beta I_\gamma:I_\beta)$. Since, by $\ast$-comaximality, $(I_\beta + I_\gamma)^\ast=R$, we conclude that $$aR=a(I_\beta + I_\gamma)^\ast \sub ((I_\beta I_\gamma:I_\beta)+(I_\beta I_\gamma:I_\gamma))^\ast\sub (E_\be+E_\gamma)^\ast.$$

Since $E^\ast=E$, the restriction of $\ast$ to $E$, denoted as usual as $\dot{\ast}$, is a star operation on $E$ and clearly the extension $R\sub E$ is $(\ast, \dot{\ast})$-compatible.  It follows that  $E$, being a fractional overring of $R$, is a Clifford $\dot{\ast}$-regular domain (Proposition \ref{overrings1}) and thus it has the $\dot{\ast}$-local $\dot{\ast}$-invertibility property \cite[Lemma 4.4]{HK}. We claim that $\{(I_\al E)^{\dot{\ast}}\}$ is a family of pairwise $\dot{\ast}$-comaximal $\dot{\ast}$-invertible $\dot{\ast}$-ideals of $E$ containing $x$,  getting a contradiction with Theorem \ref{teoFPT}(2).

First of all, by Proposition \ref{fingen}, each ideal $I_\al$ is $\ast$-stable, hence $I_\al E$ is $\dot{\ast}$-invertible in $E$.
Then observe that $E\sub R^{[\ast]}$, thus by Theorem \ref{*LO}
each $\ast$-maximal ideal of $R$ is contained in a $\dot{\ast}$-prime ideal of $E$. It follows that $(I_\al E)^{\dot{\ast}}\neq E$ for each $\al$.  In addition, for $\al\neq \be$ the ideals $I_\al E$ and $I_\be E$ are $\dot{\ast}$-comaximal, since the contraction of a $\dot{\ast}$-prime ideal of $E$ is a $\ast$-prime of $R$.
\end{proof}

The next result for the identity was proven in \cite[Corollary 4.8]{B4}.

\begin{prop} Let $\ast=\tilde{\ast}$ be a star operation on $R$ spectral and of finite type.
If $R$ is Clifford $\ast$-regular, its $\ast$-integral closure  $R^{[\ast]}$ is Clifford $\dot{\ast}$-regular.
\end{prop}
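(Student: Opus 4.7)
The plan is to combine Theorem \ref{wic}, Theorem \ref{FC}, and Theorem \ref{pvmd}(2). By Theorem \ref{wic}, $D:=R^{[\ast]}$ is a P$\dot{\ast}$MD with $\dot{\ast}=w_D=t_D$, and by Theorem \ref{FC}, $R$ has $\ast$-finite character. Theorem \ref{pvmd}(2) applied to the integrally closed domain $D$ with the finite-type star operation $\dot{\ast}$ shows that Clifford $\dot{\ast}$-regularity of $D$ is equivalent to $D$ having $\dot{\ast}$-finite character; hence the task reduces to proving the latter.

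To transfer finite character from $R$ to $D$, I would fix $0\neq y\in D$ and write $y=b/a$ with $a,b\in R\setminus\{0\}$, then study the $\dot{\ast}$-maximals $M$ of $D$ containing $y$. Each such $M$ contains $b=ay$, so $b\in M\cap R$. The contraction $P:=M\cap R$ is a $\ast$-prime of $R$ (a fact used in the proof of Theorem \ref{FC}) and, by invoking $\ast$-GU from Theorem \ref{*LO}, must in fact be $\ast$-maximal: otherwise, lifting a strictly larger $\ast$-maximal of $R$ would produce a $\dot{\ast}$-prime of $D$ strictly containing $M$, contradicting its $\dot{\ast}$-maximality. Thus every $\dot{\ast}$-maximal of $D$ containing $y$ contracts to one of the $\ast$-maximals of $R$ that contain $b$, and by the $\ast$-finite character of $R$ there are only finitely many such, say $N_1,\ldots,N_k$.

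It remains to show that for each $N_i$ only finitely many $\dot{\ast}$-maximals of $D$ contract to $N_i$. The localization $R_{N_i}$ is a local Clifford regular domain (transferring Clifford $\ast$-regularity along the flat extension $R\subseteq R_{N_i}$, as in Corollary \ref{locw2}), and because $D$ is a P$\dot{\ast}$MD, each $\dot{\ast}$-maximal $M$ of $D$ with $M\cap R=N_i$ yields a distinct valuation overring $D_M\subseteq K$ dominating $R_{N_i}$. The key fact I expect to invoke is that a local Clifford regular domain is dominated by only finitely many valuation overrings of $K$ (equivalently, its integral closure is semi-local); this is the main obstacle, and I plan to extract it from Bazzoni's structural analysis of local Clifford regular rings in \cite{B4}. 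Once this is in hand, the pieces combine to show $y$ lies in only finitely many $\dot{\ast}$-maximals of $D$, so $D$ has $\dot{\ast}$-finite character, and Clifford $\dot{\ast}$-regularity of $D$ follows from Theorem \ref{pvmd}(2).
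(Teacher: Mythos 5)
Your proposal follows essentially the same route as the paper: reduce via Theorems \ref{wic} and \ref{pvmd}(2) to showing that $D:=R^{[\ast]}$ has the $\dot{\ast}$-finite character, and then obtain that from Theorem \ref{FC} together with finiteness of the fiber of $\dot{\ast}$-maximal ideals of $D$ over each $\ast$-maximal ideal of $R$. The ``main obstacle'' you flag is exactly the fact the paper invokes and is already in the literature: a local Clifford regular domain is finitely stable, hence its integral closure is a Pr\"ufer domain with only finitely many maximal ideals (\cite[Proposition 2.1]{Rush} and \cite[Corollary 2.5]{O2}); the paper reaches this by first identifying $D_{R\setminus M}=(R_M)^\prime$ for each $M\in\astmax(R)$ (using $w_{\vert R_M}=d_{R_M}$), which also makes the fiber count immediate, so you need not go back to Bazzoni's structural analysis.
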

\begin{proof} We can assume $\ast = w$ (Corollary \ref{ast=w}).
If $R$ is Clifford $w$-regular, $D:= R^{[w]}$ is a P$\dot{w}$MD (Theorem \ref{wic}). By Theorem \ref{pvmd}, it is enough to show that $D$ has the $\dot{w}$-finite character.

Let $M \in \tmax(R)$. Note that $D:= R^{[w]} \subseteq R_M^{[w_{\vert R_M}]} = (R_M)^\prime$ (since $w_{\vert R_M} = d_{R_M}$). So, $D_{R \setminus M} \subseteq (R_M)^\prime$. Conversely, $R_M \subseteq D_{R \setminus M}$. Since $D_{R \setminus M}$ is integrally closed, we  obtain $(R_M)^\prime \subseteq D_{R \setminus M}$ and so $(R_M)^\prime = D_{R \setminus M}$. Now $R_M$ is finitely stable (being Clifford regular).  Hence  $(R_M)^\prime = D_{R \setminus M}$ is a Pr\"ufer domain with finitely many maximal ideals (\cite[Proposition 2.1]{Rush} and \cite[Corollary 2.5]{O2}). Since the $\dot{w}$-maximal ideals of $D$ lying over  $M$ are exactly the contractions in $D$ of the maximal ideals of $D_{R \setminus M} (= (R_M)^\prime)$, they are finitely many. Since $R$ has $w$-finite character (Theorem \ref{FC}), we conclude that $R^{[w]}$ has the $\dot{w}$-finite character.
\end{proof}

 In general it is not known whether, when $\ast=\tilde{\ast}$, a domain with $\ast$-finite character that is $\ast$-locally Clifford regular (and not $\ast$-stable) is indeed Clifford $\ast$-regular. By using Theorem \ref{pvmd}, it is easy to see that this is true if $R$ is integrally closed.

 \begin{prop} Let $\ast=\tilde{\ast}$ be a star operation spectral and of finite type on $R$. If $R$ is integrally closed, the following conditions are equivalent:
 \begin{enumerate}
\item[(i)] $R$  is Clifford $\ast$-regular;
\item[(ii)]   $R_M$ is Clifford regular for each $M\in \astmax(R)$ and $R$ has the $\ast$-finite character.
\end{enumerate}
 \end{prop}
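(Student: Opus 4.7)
The plan is to prove the two directions separately, invoking results already established in the paper.

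For (i) $\Rightarrow$ (ii), I would apply Corollary \ref{overrings}(a) to the flat extension $R \subseteq R_M$: since for $\ast = \tilde{\ast} = \wedge_{N \in \astmax(R)} d_{R_N}$, the restriction of $\ast$ to $R_M$ is the identity on $R_M$, Clifford $\ast$-regularity of $R$ implies Clifford regularity of each $R_M$ with $M \in \astmax(R)$. The $\ast$-finite character is then immediate from Theorem \ref{FC}.

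For (ii) $\Rightarrow$ (i), which is the substantive direction, the strategy is to show that $R$ is a P$\ast$MD and then invoke Theorem \ref{pvmd}(2). The key observation is that when $R$ is integrally closed, each localization $R_M$ is integrally closed, because integral closure commutes with localization: $(R_M)' = (R')_M = R_M$. Thus each $R_M$ is a local integrally closed Clifford regular domain, hence a valuation domain by \cite[Theorem 3]{BS} (the result cited in the final remark of Section~1). Having $R_M$ a valuation domain for every $M \in \astmax(R)$ means that $R$ is a P$\ast$MD \cite[Theorem 3.1]{fjs}. Together with the $\ast$-finite character hypothesis, Theorem \ref{pvmd}(2), implication (iii) $\Rightarrow$ (i), yields Clifford $\tilde{\ast}$-regularity, which equals Clifford $\ast$-regularity since $\ast = \tilde{\ast}$.

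The proof is essentially a packaging of earlier results; no step is expected to be a genuine obstacle. The one subtle point to make explicit is the transfer of integral closure to the localizations $R_M$, because without this the Bazzoni--Salce theorem on local integrally closed Clifford regular domains does not apply. Everything else is a direct appeal to the structural theorems (Corollary~\ref{overrings}, Theorem~\ref{FC}, Theorem~\ref{pvmd}) already at our disposal.
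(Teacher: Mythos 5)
Your proof is correct and follows essentially the same route as the paper: the forward direction is the localization transfer (Corollary \ref{locw2} in the paper, which amounts to the same thing as your use of the flat/compatible extension $R\subseteq R_M$ with $\dot{\ast}=d_{R_M}$) together with Theorem \ref{FC}, and the converse passes through the Bazzoni--Salce theorem to conclude that $R$ is a P$\ast$MD with $\ast$-finite character and then applies Theorem \ref{pvmd}(2). Your explicit remark that integral closedness descends to the localizations $R_M$ is exactly the point the paper uses implicitly.
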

 \begin{proof} (i) $\ra$ (ii) follows from Corollary \ref{locw2} and Theorem \ref{FC}.

 (ii) $\ra$ (i) Since a local integrally closed Clifford regular domain is a valuation domain \cite[Theorem 3]{BS}, $R$ is a P$\ast$MD with $\ast$-finite character. Hence $R$ is Clifford $\ast$-regular by Theorem \ref{pvmd}.
 \end{proof}

 We now show that a similar result is true for domains of $\ast$-dimension one.

Given an independent family $\mathcal{F}$ of prime ideals, if $\ast:=\ast_\mathcal{F}$ is of finite type, the finite character of the intersection $R:=\bigcap\{R_P\,;\; P \in \mc F\}$ is equivalent to the property that  each ideal $I$ such that $IR_P$ is finitely generated for all $P\in \mathcal{F}$ is $\ast$-finite \cite[Theorem 3.3]{AZ}. We next observe that this property is equivalent to the  $\ast$-local $\ast$-invertibility property.

\begin{prop} \label{IFC2} Let $\ast=\tilde{\ast}$ be a star operation spectral and of finite type and assume that each $\ast$-prime ideal of $R$ is contained in a unique $\ast$-maximal ideal (e.g., $R$ has $\ast$-dimension one).  Then the following conditions are equivalent:
\begin{enumerate}
\item[(i)] $R$  has the $\ast$-finite character;
\item[(ii)]   $R$  has the $\ast$-local $\ast$-invertibility property.
\end{enumerate}
\end{prop}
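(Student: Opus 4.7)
The plan starts by observing that, under the uniqueness hypothesis, $\mc F := \astmax(R)$ is an independent family of primes in the sense of \cite{AZ}. Indeed, if two distinct $\ast$-maximals $M_1, M_2$ shared a nonzero prime $P$, then the identity $P^\ast = \bigcap_{M \in \astmax(R)} PR_M \subseteq PR_{M_1} \subsetneq R_{M_1}$ (valid because $\ast = \tilde{\ast} = \ast_{\astmax(R)}$) would force $P^\ast \subsetneq R$, so $P$ would be a $\ast$-prime contained in two distinct $\ast$-maximals, contradicting the hypothesis. Since $R = \bigcap_{M \in \mc F} R_M$ and $\ast = \ast_{\mc F}$, \cite[Theorem 3.3]{AZ} applies and yields: $R$ has the $\ast$-finite character if and only if every ideal $I$ with $IR_M$ finitely generated for all $M \in \mc F$ is $\ast$-finite. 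The proposition then reduces to showing that this AZ-type condition is equivalent to the $\ast$-local $\ast$-invertibility property.

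For (i) $\Rightarrow$ (ii), the argument is immediate: given an ideal $I$ with $I^\ast R_M$ principal at each $M \in \astmax(R)$, the identity $I^\ast R_M = IR_M$ (a consequence of $\ast = \tilde{\ast}$) makes $IR_M$ finitely generated, so the AZ-type condition yields that $I$ is $\ast$-finite, and Kang's criterion \cite[Proposition 2.6]{K} gives that $I$ is $\ast$-invertible.

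For (ii) $\Rightarrow$ (i) I plan to argue by contrapositive. Suppose $R$ lacks $\ast$-finite character; Theorem \ref{teoFPT}(1) supplies a nonzero $x \in R$ contained in an infinite family $\{I_\alpha\}$ of pairwise $\ast$-comaximal $\ast$-finite $\ast$-ideals, and $\ast$-comaximality yields distinct $\ast$-maximals $M_\alpha \supseteq I_\alpha$. The target is to produce an infinite family of pairwise $\ast$-comaximal $\ast$-invertible $\ast$-ideals containing $x$, which under (ii) is ruled out by Theorem \ref{teoFPT}(2). The uniqueness hypothesis renders the $M_\alpha$ pairwise independent, and I intend to exploit this to replace each $I_\alpha$ by a locally principal $\ast$-ideal $J_\alpha \ni x$ contained in $M_\alpha$ and pairwise $\ast$-comaximal with the others; $\ast$-local $\ast$-invertibility then upgrades each $J_\alpha$ to a $\ast$-invertible ideal. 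The main obstacle is constructing the $J_\alpha$: engineering local principality at every $\ast$-maximal (not merely at $M_\alpha$) while simultaneously preserving global $\ast$-comaximality is precisely the step where the uniqueness hypothesis is indispensable, as it guarantees that localizations at distinct $M_\alpha$ do not interact through a common nonzero $\ast$-prime.
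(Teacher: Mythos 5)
Your reduction to independence of $\astmax(R)$ and your direction (i) $\ra$ (ii) are correct and coincide with the paper's: Anderson--Zafrullah \cite[Theorem 3.3]{AZ} gives that locally finitely generated ideals are $\ast$-finite, and Kang's criterion upgrades $\ast$-finite plus locally principal to $\ast$-invertible. The problem is (ii) $\ra$ (i), where you explicitly leave the construction of the ideals $J_\alpha$ as an unresolved ``main obstacle.'' That construction is not a technicality to be deferred; it is the entire content of this implication, so as written the proof has a genuine gap.

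The missing ingredient is the ideal $J := xR_M \cap R$ for $M \in \astmax(R)$ and $0 \neq x \in M$. By \cite[Lemma 2.3]{AZ}, independence of $\astmax(R)$ forces $M$ to be the \emph{only} $\ast$-maximal ideal containing $J$; hence $JR_M = xR_M$ is principal and $JR_N = R_N$ for every other $N \in \astmax(R)$, so $J$ is locally principal at every $\ast$-maximal ideal and (ii) makes it $\ast$-invertible, in particular $\ast$-finite. At this point you do not need your contrapositive detour through Theorem \ref{teoFPT} at all: the paper concludes directly by \cite[Theorem 3.3 (4 $\ra$ 1)]{AZ}, which says that $\ast$-finiteness of all the ideals $xR_M \cap R$ already implies $\ast$-finite character. (Your route through Theorem \ref{teoFPT}(2) can also be completed with the same ideals $J_\alpha := xR_{M_\alpha} \cap R$ --- each is a $\ast$-ideal contained in no $\ast$-maximal other than $M_\alpha$, so the family is pairwise $\ast$-comaximal and $\ast$-invertible, giving the desired contradiction --- but it is longer and still rests on exactly the construction you omitted.)
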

\begin{proof}
(i) $\ra$ (ii) By \cite[Theorem 3.3 (1 $\ra$ 5)]{AZ}, if  $IR_M$ is principal for all $M\in \astmax(R)$ then $I$ is $\ast$-finite. Since $\ast$ is of finite type, this implies that $I$ is $\ast$-invertible \cite[Theorem 2.6]{K}.

(ii) $\ra$ (i) Let $M\in \astmax(R)$. Since $\astmax(R)$ is independent, if $x\in M$ is a nonzero element, $M$ is the only $\ast$-maximal ideal containing the ideal $I:=xR_M\cap R$ \cite[Lemma 2.3]{AZ}. Since $IR_M=xR_M$ and $IR_N=R_N$ for all $N\in \astmax(R)$ and $N\neq M$, then $I$ is $\ast$-invertible by hypothesis. By \cite[Theorem 3.3 (4 $\ra$ 1)]{AZ} we conclude that $R$ has $\ast$-finite character.
\end{proof}

\begin{theorem} \label{teoIFC2} Let $\ast=\tilde{\ast}$ be a star operation spectral and of finite type and assume that each $\ast$-prime ideal of $R$ is contained in a unique $\ast$-maximal ideal (e.g.,  $R$ has $\ast$-dimension one).  Then the following conditions are equivalent:
\begin{enumerate}
\item[(i)] $R$ is Clifford $\ast$-regular;
\item[(ii)] $R_M$ is Clifford regular for each $M\in \astmax(R)$ and $R$ has the $\ast$-finite character;
\item[(iii)] $R_M$ is Clifford regular for each $M\in \astmax(R)$ and $R$ has the $\ast$-local $\ast$-invertibility property.\end{enumerate}
Under (any one of) these conditions $\tilde{\ast}=w$.
\end{theorem}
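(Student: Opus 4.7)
The plan is to establish the cycle (i) $\Rightarrow$ (ii) $\Leftrightarrow$ (iii) $\Rightarrow$ (i), with the final $\tilde{\ast}=w$ assertion following immediately from Corollary \ref{ast=w} once (i) is known.

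For (i) $\Rightarrow$ (ii), both components are already available. If $R$ is Clifford $\ast$-regular then Corollary \ref{locw2} gives that each $R_M$, $M\in\astmax(R)$, is Clifford regular, while the $\ast$-finite character is exactly the content of Theorem \ref{FC}. The equivalence (ii) $\Leftrightarrow$ (iii) is Proposition \ref{IFC2}, whose hypothesis on uniqueness of the containing $\ast$-maximal is exactly the standing assumption of the theorem.

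The main implication is (ii) $\Rightarrow$ (i). I would handle it by recognising $R$ as an $\astmax(R)$-IFC domain and then invoking Proposition \ref{IFC}. First I would verify that $\astmax(R)$ is an independent family in the sense of Anderson--Zafrullah: if two distinct $M_1, M_2 \in \astmax(R)$ shared a common nonzero prime $P$, then for any nonzero $x\in P$ a minimal prime of $xR$ contained in $P$ would be a $\ast$-prime (by the observation in Section~1 that minimal primes over nonzero principal ideals are $\ast$-primes, whenever $\ast$ is of finite type) lying under both $M_1$ and $M_2$, contradicting the standing hypothesis of the theorem. Together with the $\ast$-finite character provided by (ii), this makes $R$ an $\astmax(R)$-IFC domain, and since $\ast=\tilde{\ast}=\ast_{\astmax(R)}$, Proposition \ref{IFC} yields Clifford $\ast$-regularity of $R$ from the Clifford regularity of each $R_M$.

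I expect the most delicate step to be the independence verification above; everything else consists of direct citations of the tools already developed in Sections~2 and~5. The reason it is the main obstacle is that the hypothesis gives uniqueness only on $\ast$-primes, whereas independence in Anderson--Zafrullah's sense is about \emph{all} nonzero primes, and the bridge between them is exactly the height-one/minimal-prime remark from Section~1. Once (i) is in hand, $\tilde{\ast}=w$ is immediate from Corollary \ref{ast=w}.
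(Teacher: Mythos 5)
Your proposal is correct and follows essentially the same route as the paper: (i) $\Rightarrow$ (ii) via Corollary \ref{locw2} and Theorem \ref{FC}, (ii) $\Leftrightarrow$ (iii) via Proposition \ref{IFC2}, and (ii) $\Rightarrow$ (i) by recognising $R$ as an $\astmax(R)$-IFC domain and applying Proposition \ref{IFC}. Your explicit verification that $\astmax(R)$ is independent (using that minimal primes of nonzero principal ideals are $\ast$-primes for $\ast$ of finite type) is a correct filling-in of a step the paper leaves implicit.
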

\begin{proof} (i) $\ra$ (ii) $R_M$ is Clifford regular for each $M\in \astmax(R)$ by Corollary \ref{locw2} and $R$ has $\ast$-finite character by Theorem \ref{FC}.

(ii) $\ra$ (i) If the intersection $R$ has the $\ast$-finite character, taking $\mc F:= \astmax(R)$, $R$ is an $\mathcal{F}$-IFC domain.  Hence we can apply Proposition \ref{IFC}.

(ii) $\lra$ (iii) by Proposition \ref{IFC2}.
\end{proof}

Clearly the previous theorem holds for the identity. We now show that it holds also for the $t$-operation.

\begin{prop} \label{wM} Assume that each $t$-prime (respectively, prime) ideal of $R$ is contained in a unique $t$-maximal (respectively, maximal) ideal (e.g., if $R$ has $t$-dimension (respectively, dimension) one). Then the following conditions are equivalent:
\begin{enumerate}
\item[(i)] $R$ has $t$-finite character (respectively, $R$ has finite character);
\item[(ii)] $R$ is weakly Matlis (respectively, $R$ is $h$-local);
\item[(iii)] Each ideal $I$ such that $IR_M$ is $t_M$-finite for each $M\in \tmax(R)$ is $t$-finite (respectively, each ideal that is locally finitely generated  is finitely generated).
\item[(iv)]  $R$ has the $t$-local $t$-invertibility property (respectively, $R$ has the local invertibility property).
\end{enumerate}
\end{prop}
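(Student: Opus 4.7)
The plan is to prove (i) $\Leftrightarrow$ (ii) $\Leftrightarrow$ (iii) $\Leftrightarrow$ (iv) by running a cycle, treating the $t$-version and the identity version in parallel since they differ only in which family of maximal ideals is used. First, (i) $\Leftrightarrow$ (ii) is essentially built into the definitions: weakly Matlis (respectively, $h$-local) is $t$-finite character (respectively, finite character) together with independence of $t\text{-}\Max(R)$ (respectively, $\Max(R)$), and the standing hypothesis of the proposition forces that independence (two distinct $t$-maximals sharing a common $t$-prime below would contradict uniqueness), so under our hypothesis the two conditions reduce to the same finite character statement.

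For (ii) $\Rightarrow$ (iii) I would fix an integral ideal $I$ with $IR_M$ locally $t_M$-finite (respectively, locally finitely generated) and a nonzero $x \in I$. By $t$-finite character (respectively, finite character), $x$ lies in only finitely many $M_1,\dots,M_n$ of the relevant family. For each $M_i$ choose a finitely generated $F_{M_i}\subseteq I$ with $(F_{M_i}R_{M_i})^{t_{M_i}}=(IR_{M_i})^{t_{M_i}}$ (respectively, $F_{M_i}R_{M_i}=IR_{M_i}$), and set $F:=(x)+F_{M_1}+\cdots +F_{M_n}$. Then $(FR_M)^{t_M}=(IR_M)^{t_M}$ at every $M$ in the family, since at the $M_i$ this is by construction and at the other $M$'s the element $x$ is a unit, forcing $FR_M=R_M=IR_M$. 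Using the formula $I^t=\bigcap_M(IR_M)^{t_M}$ valid in weakly Matlis domains (which rests on $t=\wedge_{M\in t\text{-}\Max(R)} t_M$, as in the proof of Corollary \ref{loc-t}), respectively $I=\bigcap_M IR_M$ valid in $h$-local domains, I conclude $F^t=I^t$ (respectively, $F=I$), so $I$ is $t$-finite (respectively, finitely generated).

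The implication (iii) $\Rightarrow$ (iv) is immediate: if $IR_M$ is principal for every $M$ in the family, then in particular it is $t_M$-finite (respectively, finitely generated), so by (iii) the ideal $I$ is $t$-finite (respectively, finitely generated); combined with $t$-local principality (respectively, local principality), this yields $t$-invertibility (respectively, invertibility) by \cite[Proposition 2.6]{K}. For (iv) $\Rightarrow$ (i) I would copy the argument used in the proof of Proposition \ref{IFC2}: for each $M$ in the family and each nonzero $x\in M$, independence together with \cite[Lemma 2.3]{AZ} gives that $I:=xR_M\cap R$ satisfies $IR_M=xR_M$ and $IR_N=R_N$ for $N\neq M$, hence is $t$-locally principal; then (iv) makes $I$ $t$-invertible (respectively, invertible), and \cite[Theorem 3.3]{AZ} (the implication from local invertibility of such ideals to finite character of the intersection) concludes that $R$ has $t$-finite character (respectively, finite character).

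The main obstacle I anticipate is the step (ii) $\Rightarrow$ (iii), where one must know that in a weakly Matlis domain the $t$-closure of any ideal coincides with the intersection of its local $t_M$-closures; everything else consists of assembling the local witnesses together with one fixed element $x$, and appealing to already-quoted results of Anderson--Zafrullah and Kang to transfer the local information to a global $\ast$-finite/$\ast$-invertible statement.
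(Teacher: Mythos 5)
Your argument is correct and follows the same overall scheme as the paper: the cycle (i) $\Leftrightarrow$ (ii), then (ii) $\Rightarrow$ (iii) $\Rightarrow$ (iv) $\Rightarrow$ (i), with Kang's criterion \cite[Proposition 2.6]{K} converting ``$t$-finite plus $t$-locally principal'' into $t$-invertibility and the Anderson--Zafrullah results \cite[Lemma 2.3 and Theorem 3.3]{AZ} closing the loop exactly as in Proposition \ref{IFC2}. The one genuine difference is at (ii) $\Rightarrow$ (iii): the paper simply cites \cite[Theorem 1.9]{FPT} for the $t$-operation and \cite[Corollary 3.4]{AZ} for the identity, whereas you give a self-contained patching argument (a fixed nonzero $x\in I$ plus finitely many local finite-type witnesses $F_{M_i}$, glued via $I^t=\bigcap_M (IR_M)^{t_M}$, which is legitimate in a weakly Matlis domain since $t=\wedge_{M}t_M$ under $t$-finite character, cf.\ the proof of Corollary \ref{loc-t}). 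That buys transparency at the cost of length; both are fine.

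One small precision point in your (iii) $\Rightarrow$ (iv): the $t$-local $t$-invertibility property as defined in this paper assumes $I^tR_M$ principal, not $IR_M$ principal. The fix is one line, as in the paper's own proof: $(I^tR_M)^{t_M}=(IR_M)^{t_M}$, so $I^tR_M$ principal forces $IR_M$ to be $t_M$-finite, which is all that (iii) requires; Kang's criterion then applies because it is precisely $I^tR_M$ (equivalently $(IR_M)^{t_M}$) that must be principal.
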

\begin{proof} (i) $\lra$ (ii) is clear. (i) $\ra$ (iii) for the $t$-operation is in \cite[Theorem 1.9]{FPT} and for the identity in \cite[Corollary 3.4]{AZ}.

(iii) $\ra$ (iv) Since $(I^tR_M)^{t_M}=(IR_M)^{t_M}$, if $I^tR_M$ is principal, $IR_M$ is $t_M$-finite. Hence $I$ is $t$-finite and so $t$-invertible. For the identity it is even simpler.

(iv) $\ra$ (ii) Assume that $IR_M$ is principal,  for each $M\in \tmax(R)=\wmax(R)$. Then $IR_M=(IR_M)^{t_M}\sub I^tR_M\sub (I^tR_M)^{t_M}$ and so
$I^tR_M=IR_M$ is principal. By the hypothesis, it follows that $I$ is $t$-invertible, equivalently $w$-invertible.  Hence $R$ has the $w$-local $w$-invertibility property and than it has $t$-finite character by Proposition \ref{IFC2}.  For the identity we can apply directly Proposition \ref{IFC2}
\end{proof}

\begin{cor} \label{teowM2} Assume that each $t$-prime ideal of $R$ is contained in a unique $t$-maximal ideal (e.g., $R$ has $t$-dimension one). Then the following conditions are equivalent:
\begin{enumerate}
\item[(i)] $R$ is Clifford $t$-regular (respectively, $t$-stable);
\item[(ii)]  $R_M$ is Clifford $t_M$-regular (respectively, $t_M$-stable) for each $M\in \tmax(R)$ and $R$ has $t$-finite character.
\end{enumerate}
\end{cor}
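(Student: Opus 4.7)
The plan is to reduce this corollary to the weakly Matlis version already established in Corollary \ref{localwM}(1), using Proposition \ref{wM} as the bridge between the unique-containment hypothesis and the $t$-finite character. It is worth noting first that Theorem \ref{teoIFC2} cannot be quoted directly for $\ast=t$, since it is proved in the $\ast=\tilde{\ast}$ framework and Corollary \ref{ast=w} would then force $\ast=w$ in the Clifford regular case.

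For the direction (i) $\ra$ (ii), I would treat the Clifford $t$-regular case first. Since $t$ is a star operation of finite type, the Halter-Koch result \cite[Lemma 4.4]{HK} (recalled just before Theorem \ref{FC}) gives that $R$ has the $t$-local $t$-invertibility property; the standing hypothesis on unique $t$-maximal containment then triggers implication (iv) $\ra$ (i) of Proposition \ref{wM}, yielding that $R$ has $t$-finite character. Because each $R_M$ is flat over $R$, Corollary \ref{overrings}(a) transfers Clifford $t$-regularity from $R$ to $R_M$ (equipped with $t_M$), for every $M \in \tmax(R)$. The $t$-stable case runs in parallel: by Proposition \ref{prop1}(1), $t$-stability implies Clifford $t$-regularity, hence the $t$-finite character by the argument just given, and the $t$-stable clause of Corollary \ref{overrings}(a) passes $t$-stability from $R$ to each $R_M$.

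For the reverse direction (ii) $\ra$ (i), the equivalence (i) $\lra$ (ii) of Proposition \ref{wM} shows that a domain with $t$-finite character satisfying our hypothesis is weakly Matlis. Corollary \ref{localwM}(1) then gives at once that $R$ is Clifford $t$-regular (respectively, $t$-stable), since under the weakly Matlis assumption these properties are detected locally at the $t$-maximal ideals.

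The main obstacle I anticipate is essentially bibliographic: recognizing that although the analogous statement for $\ast=\tilde{\ast}$ is Theorem \ref{teoIFC2}, the $t$-operation case must be obtained by re-assembling the ingredients (Halter-Koch's $t$-local $t$-invertibility, Proposition \ref{wM}, Corollary \ref{overrings}(a), and Corollary \ref{localwM}(1)) rather than by direct quotation. Once these pieces are in hand, no step requires substantive new calculation.
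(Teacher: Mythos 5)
Your proof is correct and follows essentially the same route as the paper: the paper likewise gets the $t$-finite character from Halter-Koch's $t$-local $t$-invertibility property via Proposition \ref{wM}, and handles (ii) $\Rightarrow$ (i) by observing that $R$ is weakly Matlis and invoking Corollary \ref{localwM}. The only (harmless) variation is that you transfer regularity/stability to the $R_M$ via Corollary \ref{overrings}(a) where the paper cites Corollary \ref{loc-t}.
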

\begin{proof} (i) $\ra$ (ii) $R_M$ is Clifford $t_M$-regular (respectively, $t_M$-stable) by Corollary \ref{loc-t}. Since a Clifford $t$-regular domain satisfies condition (iv) of Proposition \ref{wM} \cite[Lemma 4.4]{HK}, then $R$ has $t$-finite character.

(ii) $\ra$ (i) If $R$ has $t$-finite character it is weakly Matlis. Hence we can apply  Corollary \ref{localwM}.
\end{proof}


\end{document}